
\documentclass[12pt]%
{amsart}
\usepackage{verbatim}

\usepackage{amssymb}
\usepackage{enumerate}
\usepackage[active]{srcltx}

\newtheorem{theorem}{Theorem}[section]
\newtheorem{proposition}[theorem]{Proposition}
\newtheorem{lemma}[theorem]{Lemma}
\newtheorem{sublemma}[theorem]{Sublemma}
\newtheorem{claim}[theorem]{Claim}

\newtheorem{case}{Case}
\newtheorem{scase}{Case}[case]

\newtheorem{qtheorem}{Koszmider's Theorem}

\theoremstyle{definition}

\newtheorem{definition}[theorem]{Definition}

\newif\ifdeveloping


\ifdeveloping
\usepackage[notref,notcite]{showkeys}
\fi

\newif\ifcommented

\newcommand{\comm}[1]{}

\ifcommented
\renewcommand{\comm}[1]{
\fbox{\fbox{\begin{minipage}{300pt}#1\end{minipage}}
}}

\fi

\newcommand{\ical}{{\mathcal I}}
\newcommand{\pcal}{{\mathcal P}}

\newcommand{\setm}{\setminus}
\newcommand{\empt}{\emptyset}
\newcommand{\subs}{\subset}

\newcommand{\oo}{{\kappa}}
\newcommand{\oot}{{{\kappa}^+}}
\newcommand{\ooth}{{{\kappa}^{++}}}
\newcommand{\om}{{<{\kappa}}}

\def\<{\left\langle}
\def\>{\right\rangle}
\def\cf{\operatorname{cf}}

\def\br#1;#2;{\bigl[ {#1} \bigr]^ {#2} }

\def\to{\longrightarrow}
\def\rank{\operatorname{rk}}

\newcommand{\newcases}{\setcounter{case}{0}}

\newcommand{\lev}[2]{\operatorname{I}_{#1}(#2)}

\newcommand{\conseq}[2]{\<#1\>_{#2}}

\newcommand{\pib}{\operatorname{\pi}_B}

\newcommand{\incof}[1]{\operatorname{E}({#1})}
\newcommand{\eps}[2]{{\epsilon}^{#1}_{#2}}
\newcommand{\veps}[2]{{\varepsilon}^{#1}_{#2}}
\newcommand{\intpart}[1]{\operatorname{\mathcal E}({#1})}

\newcommand{\nn}[1]{\operatorname{n}({#1})}

\newcommand{\contint}[2]{\operatorname{I}({#1},#2)}

\newcommand{\oorbf}{\operatorname{o}}
\newcommand{\oorb}[1]{\oorbf(#1)}
\newcommand{\orbf}{\operatorname{o^*}}
\newcommand{\dorbf}{\operatorname{f}}

\newcommand{\orb}[1]{\orbf(#1)}
\newcommand{\dorb}[2]{\dorbf\{#1,#2\}}

\newcommand{\eorbf}{\operatorname{\overline{o}}}
\newcommand{\eorb}[1]{\eorbf(#1)}

\newcommand{\blocks}{\mathbb B}
\newcommand{\und}{X}
\newcommand{\block}[1]{B_{#1}}
\newcommand{\ifu}{\operatorname{i}}
\newcommand{\undef}{\text{\rm undef}}

\newcommand{\concat}{\mathop{{}^{\frown}\makebox[-3pt]{}}}
\newcommand{\climit}{{\delta}}

\newcommand{\xn}[1]{x_{{\nu},#1}}
\newcommand{\xm}[1]{x_{{\mu},#1}}

\newcommand{\httr}{\operatorname{ht{}^-}}
\newcommand{\htt}{\operatorname{ht{}}}

\newcommand{\prep}{\preceq_{\nu}}
\newcommand{\preq}{\preceq_{\mu}}
\newcommand{\prer}{\preceq}
\newcommand{\prepq}{\preceq_{{\nu},{\mu}}}

\newcommand{\ap}{A_{\nu}}
\newcommand{\aq}{A_{\mu}}
\newcommand{\ar}{A}

\newcommand{\app}{A_p}
\newcommand{\ipp}{\ifu_p}
\newcommand{\prepp}{\preceq_p}

\newcommand{\ip}{\ifu_{\nu}}
\newcommand{\iq}{\ifu_{\mu}}
\newcommand{\ir}{\ifu}

\newcommand{\xp}[1]{x_{{\nu},#1}}
\newcommand{\xq}[1]{x_{{\mu},#1}}

\newcommand{\pre}[1]{\preceq^{R#1}}

\newcommand{\mc}[1]{\mathcal{#1}}
\newcommand{\mbb}[1]{\mathbb{#1}}

\newcommand{\gbar}{\bar g}

\newcommand{\lt}{{\mathcal L}_{\oo}}
\newcommand{\ltd}{\lt^{\delta}}
\newcommand{\aker}{{A_\vartriangle}}

\newcommand{\precc}{\preceq}
\newcommand{\UU}{\operatorname{U}}
\newcommand{\uup}{\UU_{\precc}}

\begin{document}

\title{A consistency result on long cardinal sequences}

\author[J. C. Martinez]{Juan Carlos Martinez}
\address{Facultat de Matem\`atiques i Inform\`atica \\ Universitat de Barcelona \\ Gran
  Via 585 \\ 08007 Barcelona, Spain}

\email{jcmartinez@ub.edu}
\author[L. Soukup]{
Lajos Soukup }
\thanks{The first author was supported by the Spanish Ministry of Education DGI grant MTM2017-86777-P and by the Catalan DURSI grant 2017SGR270. The second author was supported by NKFIH grants nos. K113047 and K129211.
}

\address{Alfr{\'e}d R{\'e}nyi Institute of Mathematics, Hungarian Academy of Sciences\\Budapest, V. Re\'altanoda u. 13-15, H-1053, Hungary}
\email{soukup@renyi.hu}
\keywords{locally compact scattered space, superatomic Boolean algebra, cardinal sequence.}
\subjclass[2010]{54A25, 06E05, 54G12, 03E35}

\begin{abstract}
For any regular cardinal 
 $\kappa$  and ordinal $\eta<\kappa^{++}$ it is consistent that $2^{\kappa}$ is as large as you wish, and 
every function 
  $f:\eta \to [\kappa,2^{\kappa}]\cap Card$ with  $f(\alpha)=\kappa$ for $cf(\alpha)<\kappa$ is the cardinal sequence of some locally compact  scattered space.
\end{abstract}

\maketitle

\section{Introduction}

If $X$ is a
locally compact,
scattered Hausdorff (in short: LCS) space and $\alpha$ is an ordinal, 
we let  $\lev{\alpha}X$ denote the ${\alpha}^{\rm th}$ Cantor-Bendixson level
of $X$. The cardinal sequence of $X$, $CS(X)$, is the sequence of the 
cardinalities of the infinite Cantor-Bendixson levels of $X$, i.e.
$$\mbox{CS}(X) = \langle |I_{\alpha}(X)| : \alpha < \httr(X)\rangle,$$
where $\httr(X)$, the 
  {\em reduced height} of $X$, is the minimal ordinal
${\beta}$ such that  $\lev {\beta}X$ is finite.
The {\em  height} of $X$, denoted by  $\htt(X)$,  is defined as the minimal ordinal
${\beta}$ such that  $\lev {\beta}X=\empt$. 
Clearly $\httr(X)\le \htt(X)\le \httr(X)+1$.

If $\alpha$ is an ordinal, 
let $\mc C( {\alpha})$ denote the class of all cardinal sequences
of LCS spaces of reduced height  ${\alpha}$ and put
$$\mc C_ {\lambda}({\alpha})=\{s\in \mc C({\alpha}): s(0)={\lambda}\land \forall
{\beta}<{\alpha}\ s({\beta})\ge {\lambda}\}.$$

Let $\conseq \kappa\alpha $ denote the constant $\kappa$-valued sequence of length $\alpha$.

In \cite{JSW} it was shown that the class $\mc C(\alpha)$ is described if the classes $\mc C_\kappa(\beta)$ are characterized
for every infinite cardinal $\kappa$ and ordinal $\beta\le \alpha$. Then, under GCH,  a full description of the classes $\mc C_\kappa(\alpha)$ 
for infinite cardinals $\kappa$  and ordinals $\alpha<\omega_2$ was 
given.

The situation becomes, however, more complicated for $\alpha\ge \omega_2$. 
In \cite{MS2} we gave a consistent full characterization of $\mc C_{\kappa}(\alpha)$ for any uncountable regular cardinals $\kappa$ and ordinals $\alpha<\kappa^{++}$ under GCH.

If $GCH$ fails, much less is known on $\mc C_{\kappa}({\alpha})$    even for ${\alpha}<{\kappa}^{++}$.

In \cite{R1} it was proved that $\<\omega\>_{\omega_1}\concat \<\omega_2\>\in \mc C_{\omega}(\omega_1+1)$ is consistent.

In \cite{KM} a similar result was proved for uncountable cardinals instead of $\omega$:  if $\kappa$ is  a regular cardinal with $\kappa^{<\kappa}=\kappa>{\omega}$ and $2^\kappa=\kappa^+$, then in 
some cardinality preserving generic extension of the ground model we have 
$$\<\kappa\>_{\kappa^+}\concat \<\kappa^{++}\>\in \mc C(\kappa^++1).
$$

In \cite{MS3} we proved that if $\kappa$ and $\lambda$ are regular cardinals, $\kappa^{<\kappa}=\kappa$,  $2^\kappa=\kappa^+$,  and ${\delta}<\kappa^{++}$ with $cf({\delta})=\kappa^+$, then in 
some cardinality preserving generic extension of the ground model we have 
$$\<\kappa\>_{\delta}^\frown \<\lambda\>\in \mc C({\delta}+1).
$$

In this paper we will prove a much stronger result than the above mentioned one.  

\begin{theorem}\label{tm:newmain}
 Assume that $\kappa$ and $\lambda$ are regular cardinals, 
 ${\kappa}^{++}\le {\lambda}$, $\kappa^{<\kappa}=\kappa$, 
 $2^\kappa=\kappa^+$, ${\lambda}^{\kappa^+}={\lambda}$ and ${\delta}<\kappa^{++}$.
 Then, in 
some cardinality preserving generic extension of the ground model, we have
$2^{\kappa}={\lambda}$ and 
 \begin{displaymath}
  \{ f\in {}^{{\delta}}([{\kappa},{\lambda}]\cap Card): f({\alpha})={\kappa} \text{ whenever } cf({\alpha})<{\kappa} \}\subs \mc C_{\kappa}({{\delta}}).
 \end{displaymath}
%
\end{theorem}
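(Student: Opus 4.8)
The plan is to realise all the admissible functions $f$ by one cardinal- and cofinality-preserving forcing. Let $\mathcal F$ be the set of functions appearing in the statement and let $f_{\max}\colon{\delta}\to[{\kappa},{\lambda}]\cap Card$ be given by $f_{\max}({\alpha})={\kappa}$ when ${\cf}({\alpha})<{\kappa}$ and $f_{\max}({\alpha})={\lambda}$ otherwise. Since $f({\alpha})\le{\lambda}$ always, and $f({\alpha})={\kappa}$ is demanded when ${\cf}({\alpha})<{\kappa}$, every $f\in\mathcal F$ satisfies $f\le f_{\max}$ pointwise, with equality below each limit of cofinality $<{\kappa}$; and $|{\delta}|\le{\kappa}^+$ gives $|\mathcal F|\le{\lambda}^{{\kappa}^+}={\lambda}$. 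I would first reduce the theorem to producing, in the generic extension, a single LCS space $\und$ of reduced height ${\delta}$ with $CS(\und)=f_{\max}$ that is moreover \emph{universal} in the sense that for every $f\in\mathcal F$ there is an \emph{open} $Y\subs\und$ with $|Y\cap\lev{{\alpha}}{\und}|=f({\alpha})$ for all ${\alpha}<{\delta}$. This suffices because an easy induction on ${\alpha}$ shows that for an open subspace $Y$ of an LCS space $\lev{{\alpha}}{Y}=Y\cap\lev{{\alpha}}{\und}$, so $Y$ is an LCS space with $CS(Y)=f$. (If arranging universality directly turns out to be awkward, the variant is to build the whole family $\<\und^f:f\in\mathcal F\>$ at once as a $\om$-support product of one poset per $f$; the analysis is essentially the same.)

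The heart of the matter is the construction of the forcing $P$. A condition $p$ is a $\om$-sized approximation to $\und$: a $\om$-sized set $\app$ of points, each tagged with its intended Cantor--Bendixson level in ${\delta}$; a partial order $\prepp$ on $\app$ that is to become the specialisation order $x\prepp y\iff y\in\overline{\{x\}}$; and, for each $x\in\app$, a $\om$-sized ``neighbourhood trace'' $\ipp(x)\subs\app$ contained in the levels strictly below the level of $x$. These data must satisfy the coherence axioms that make a directed union of conditions the neighbourhood base of an LCS space: compatibility of $\prepp$ with the level function, so that $\prepp$ is well founded of the prescribed rank; closure of the family of traces under the finitary Boolean operations a base requires; and, at a point $x$ of limit level ${\alpha}$, the requirement that $\ipp(x)$ meet cofinally many levels below ${\alpha}$. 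It is exactly here that the hypothesis $f({\alpha})={\kappa}$ for ${\cf}({\alpha})<{\kappa}$ enters: when ${\cf}({\alpha})<{\kappa}$ a cofinal trace of size $\le{\kappa}$ can always be chosen, which would be impossible at a larger level. Order $P$ by end-extension (no new point of level $<{\alpha}$ is allowed into the trace of an old point of level ${\alpha}$). Then $P$ is $\om$-directed-closed, since a $\om$-sized directed union of conditions is again a condition.

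The substantive forcing-theoretic fact is that $P$ is ${\kappa}^+$-cc, and this is the main obstacle. Using ${\kappa}^{<{\kappa}}={\kappa}$ --- which gives the $\Delta$-system lemma for ${\kappa}^+$ many $\om$-sized sets and also $2^{<{\kappa}}={\kappa}$ --- one thins ${\kappa}^+$ given conditions to ${\kappa}^+$ of them whose domains form a $\Delta$-system with root $R$, which induce literally the same structure on $R$, and which are pairwise isomorphic via isomorphisms fixing $R$; one must then amalgamate any two such, $p$ and $q$, to a common extension $r$ with $A_r=\app\cup A_q$. Gluing the order and the level data is harmless; the difficulty is to define the traces on $A_r$ --- for $x\in R$ one must combine $\ipp(x)$ with $\ifu_q(x)$, and for $x\in\app\setm R$ one must decide how the new points of $A_q\setm R$ lying below the level of $x$ may enter $\ifu_r(x)$ --- in such a way that every coherence axiom survives, especially the cofinality condition at limit levels and compactness at successor levels. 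This is where the combinatorial apparatus of the paper (the blocks $\blocks$, the orbit functions, the various refinements of $\prer$) is deployed: one isolates a dense, thinning-stable class of ``good'' conditions for which the glued $r$ is provably a condition, the hardest cases being successor-level points (extending an existing compact neighbourhood over the new points without destroying its compactness) and points of level ${\alpha}$ with ${\cf}({\alpha})<{\kappa}$ (packing the new lower points into a $\le{\kappa}$-sized cofinal trace).

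Granting the chain condition, the remainder is routine. A standard battery of density arguments shows that for each $f$, each ${\alpha}<{\delta}$ and each ${\xi}<f_{\max}({\alpha})$ the set of conditions adding the ${\xi}$-th point of level ${\alpha}$ is dense, so the generic $\und$ has every level of the intended size and reduced height ${\delta}$; further density arguments force the generic traces to generate a genuine locally compact scattered topology and to supply the universality property. Finally, being $\om$-directed-closed and ${\kappa}^+$-cc of size $\le{\lambda}$, $P$ preserves all cardinals and cofinalities; $2^{\kappa}\le{\lambda}$ follows by a nice-name count (using the ${\kappa}^+$-cc and ${\lambda}^{{\kappa}^+}={\lambda}$), while $2^{\kappa}\ge{\lambda}$ holds because the generic traces of the ${\lambda}$-many points of a level of cofinality $\ge{\kappa}$ code ${\lambda}$ distinct subsets of a level of size ${\kappa}$ --- and when ${\delta}\le{\kappa}$, so that no such level exists, one simply multiplies $P$ by $\mathrm{Add}({\kappa},{\lambda})$.
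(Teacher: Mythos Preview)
Your high-level architecture --- build one ``universal'' LCS space with cardinal sequence $f_{\max}$ and realise every $f\in\mathcal C$ as an open subspace --- is exactly what the paper does; the paper calls such a space $(\kappa,\lambda,\delta,\mathcal L_{\kappa}^{\delta})$-good, and your ``easy induction'' is its Proposition~\ref{pr:ltd}. The gap is in the $\kappa^+$-c.c.\ argument, and it is a genuine one, not merely a matter of missing detail.

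You misdiagnose where the difficulty lies. You name the hardest cases as successor levels and levels of cofinality $<\kappa$; but those levels carry only $\kappa$ points and are handled by the orbit machinery of \cite{M} essentially as in the thin--tall case. The new obstruction sits at levels $\zeta$ with $\cf(\zeta)=\kappa^+$, which carry $\lambda$ points. After $\Delta$-system thinning you will have conditions $r_\nu,r_\mu$ with points $x\in A_{\nu}\setminus A_\mu$ and $y\in A_{\mu}\setminus A_\nu$ at the \emph{same} such level $\zeta$, and the amalgam must choose a height for their infimum. The only natural candidate set, the extended orbit $\bar o(\zeta)$, has order type $\kappa^+$. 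No $<\kappa$-sized condition can commit in advance to a $<\kappa$-sized range of admissible heights for $\ifu\{x,y\}$ valid for all future $y$, and $\Delta$-system thinning over $\kappa^+$ conditions cannot control data indexed by \emph{pairs} of conditions. Your sketch offers no mechanism for this, so the amalgamation simply does not go through as stated.

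The paper's solution is a two-step forcing that you omit entirely. First one passes, via Koszmider's theorem, to a cardinal-preserving extension carrying a $\kappa^+$-strongly unbounded function $\mathcal F:[\lambda]^2\to\kappa^+$. In the second step $\mathcal F$ is hard-wired into the definition of the poset: clause~(P5) forces $\pi(\ifu\{x,y\})$ to lie in a set $\dorb{x}{y}$ which, in the dangerous case $\pi_B(x)=\pi_B(y)\ne S$ with $\cf(\pi(x))=\kappa^+$, is cut down using the value $\mathcal F\{\rho(x),\rho(y)\}$ to have size $\le\kappa$. In the c.c.\ proof the strongly unbounded property of $\mathcal F$ is precisely what lets one find, among $\kappa^+$ thinned conditions, a \emph{good pair} $\{r_\nu,r_\mu\}$ (Definition~\ref{Definition 2.8}) on which $\mathcal F$ exceeds a pre-computed threshold on every relevant cross-pair; only then does the amalgamation succeed. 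Without $\mathcal F$ or an equivalent device there is no reason to expect $\kappa^+$-c.c., and indeed the product variant you mention as a fallback faces the identical obstruction.
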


\begin{definition}
 Let $\mc C$ be a family of sequences of cardinals.
 We say that an LCS space $X$ is {\em universal for $\mc C$}  iff
 $CS(X)\in \mc C$ and for each $s\in \mc C$ there is an open subspace $Z\subs X$ with $CS(Z)=s$.
\end{definition}

Instead of Theorem \ref{tm:newmain} we prove the following stronger result:

\begin{theorem}\label{tm:mainnew}
 Assume that $\kappa$ and $\lambda$ are regular cardinals, 
 ${\kappa}^{++}\le {\lambda}$,
 $\kappa^{<\kappa}=\kappa$, 
 $2^{\kappa}={\kappa}^+$,  $\lambda^{{\kappa}^+}=\lambda$ and ${\delta}<\kappa^{++}$.
Then,  in some cardinal preserving generic extension, 
we have $2^{\kappa}={\lambda}$ and 
there is an LCS space $X$ which is universal for 
 \begin{displaymath}
  \mc C=\{f\in {}^{\delta}\big([\kappa,\lambda]\cap Card\big ): f(\alpha)=\kappa \text{ whenever } cf(\alpha)<\kappa\}.
 \end{displaymath}

\end{theorem}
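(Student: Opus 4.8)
The plan is to reduce the theorem to a single forcing construction. First observe that $\mc C$ has a pointwise largest element: define $g\colon\delta\to[\kappa,\lambda]\cap Card$ by $g(\alpha)=\lambda$ if $\alpha$ is a limit ordinal with $\cf(\alpha)\ge\kappa$, and $g(\alpha)=\kappa$ otherwise. Then $g\in\mc C$, and $f(\alpha)\le g(\alpha)$ for every $f\in\mc C$ and $\alpha<\delta$. Put $U_\alpha=\{\alpha\}\times g(\alpha)$ and $X=\bigcup_{\alpha<\delta}U_\alpha$. We shall force a topology $\tau$ on $X$ such that: (a) $(X,\tau)$ is an LCS space with $\lev{\alpha}{X}=U_\alpha$ for all $\alpha<\delta$ and $\lev{\delta}{X}=\emptyset$; and (b) each $x\in U_\alpha$ has a neighbourhood base of compact open sets $V$ with $V\cap U_\alpha=\{x\}$ and $V\cap U_\beta\subseteq\{\beta\}\times\kappa$ for all $\beta<\alpha$. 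Granting this, $X$ is universal for $\mc C$: by (a), $CS(X)=g\in\mc C$; and given any $f\in\mc C$ of the extension, put $Z_f=\bigcup_{\alpha<\delta}(\{\alpha\}\times f(\alpha))$. If $x\in U_\alpha\cap Z_f$ and $V$ is as in (b), then $V\subseteq\{x\}\cup\bigcup_{\beta<\alpha}(\{\beta\}\times\kappa)\subseteq Z_f$, since $\kappa\le f(\beta)$ for all $\beta$; hence $Z_f$ is open. As open subspaces inherit Cantor--Bendixson levels, $\lev{\alpha}{Z_f}=Z_f\cap\lev{\alpha}{X}=\{\alpha\}\times f(\alpha)$ for $\alpha<\delta$ and $\lev{\delta}{Z_f}=\emptyset$, so $\httr(Z_f)=\delta$ and $CS(Z_f)=f$. (When $\delta\le\kappa$ we have $g\equiv\kappa$ and $\mc C=\{\conseq\kappa\delta\}$, and the theorem follows easily from a ground-model LCS space of cardinal sequence $\conseq\kappa\delta$ together with the Cohen forcing $\operatorname{Add}(\kappa,\lambda)$; so assume $\delta>\kappa$, and note that then $\lev{\kappa}{X}=U_\kappa$ has cardinality $\lambda$.)

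We define a forcing $P$ whose conditions are triples $p=\langle\app,\prepp,\ipp\rangle$: here $\app\in[X]^{<\kappa}$; $\prepp$ is a partial order on $\app$ respecting level labels (if $y\prepp x$ then the first coordinate of $y$ is at most that of $x$, and equals it only if $y=x$); and $\ipp$ assigns to each $x\in\app$ a set $\ipp(x)\subseteq\{y\in\app:y\prepp x\}$, the \emph{trace} of $x$, which is the union of $\{x\}$ with finitely many traces $\ipp(y)$ of points $y\prepp x$ of strictly smaller level. These data must satisfy the standard coherence axioms making the family $\{\ipp(x):p\in G,\,x\in\app\}$ a base for an LCS topology on $X$ in which every point of $U_\alpha$ has rank $\alpha$, together with: the \emph{support axiom} that $\ipp(x)\cap U_\beta\subseteq\{\beta\}\times\kappa$ for all $x\in\app$ and $\beta<\delta$; a \emph{separation axiom} guaranteeing that distinct points of $\app$ with the same level label get traces witnessing Hausdorffness; and a combinatorial apparatus of blocks and associated orbits (fix once and for all, for each $\beta<\delta$ with $\cf(\beta)\ge\kappa$, a partition of $\{\beta\}\times\lambda$ into $\lambda$ blocks of size $\kappa$, and tie the trace of a point in a given block to that block) needed for the amalgamation below. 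Order $P$ by $q\le p$ iff $\app\subseteq A_q$, $\preceq_q\restriction\app=\prepp$, and $\ipp(x)=\ifu_q(x)\cap\app$ for all $x\in\app$ (the trace of an old point changes only by the accretion of new points of lower level).

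The verification follows the usual pattern. (i) $P$ is $<\kappa$-closed: the union of a $<\kappa$-increasing chain of conditions is again a condition, since $\kappa$ is regular, $[X]^{<\kappa}$ is closed under $<\kappa$-unions, and the axioms are of $<\kappa$ character; hence $P$ adds no new $<\kappa$-sequences and preserves all cardinals $\le\kappa$. (ii) $P$ is $\kappa^+$-cc: given $\kappa^+$ conditions, use $\kappa^{<\kappa}=\kappa$ and $2^\kappa=\kappa^+$ to refine to a family whose supports form a $\Delta$-system with root $R$ on which the conditions are pairwise isomorphic; any two such are then compatible, via the amalgam obtained by identifying them along $R$ and sending their remaining positions to pairwise disjoint blocks, so that the support, separation and coherence axioms stay valid in the union. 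Hence $P$ preserves all cardinals and cofinalities. (iii) Since $|\delta|\le\kappa^+\le\lambda$ we have $|X|=\lambda$, and $\lambda^{<\kappa}=\lambda$ together with $\lambda^{\kappa^+}=\lambda$ give $|P|\le\lambda$ and at most $\lambda$ nice $P$-names for subsets of $\kappa$; as the generic adds $\lambda$ many distinct subsets of the $\kappa$-sized set $\bigcup_{\beta<\kappa}U_\beta$ — distinct points of $\lev{\kappa}{X}$ must have distinct traces there by Hausdorffness — we conclude that $2^\kappa=\lambda$ in $V[G]$.

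It remains to establish (a) and (b) in $V[G]$. Routine density arguments give $\bigcup_{p\in G}\app=X$, show that the generic family of traces generates an LCS Hausdorff topology $\tau$, and — the decisive point — that every $x\in U_\alpha$ has Cantor--Bendixson rank exactly $\alpha$. The delicate direction is $\rank(x)\ge\alpha$, which requires genericity to build, $\preceq$-below $x$, an accumulation structure whose ranks are cofinal in $\alpha$; this is where the shape of $g$ and the hypothesis that $f(\alpha)=\kappa$ whenever $\cf(\alpha)<\kappa$ are used, ensuring that $g$ is a genuine cardinal sequence and that the needed extensions are always available. Property (b) is immediate from the support axiom. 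I expect the main obstacle to be precisely the interplay between (ii) and these rank computations: the block–orbit apparatus must be designed so that conditions sharing an isomorphism type over a root really are amalgamable and the amalgam still obeys the support and separation axioms, while conditions remain flexible enough for the cofinal rank-building density arguments to succeed. This is the technical core of the proof, refining the methods of \cite{KM} and \cite{MS3}.
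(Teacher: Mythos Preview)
Your outline has the right overall shape, but the $\kappa^+$-cc step (ii) hides the main difficulty, and the device you propose does not resolve it. The hard case in amalgamation is when two conditions $p,q$ from the $\Delta$-system each contain points $x\in\app\setminus R$ and $y\in A_q\setminus R$ lying on the \emph{same} large level $\{\zeta\}\times\lambda$ with $\cf(\zeta)=\kappa^+$. Such $x$ and $y$ are incomparable but may be forced compatible in the amalgam, and the amalgam must produce an infimum (equivalently, a common trace) for them. There are $\lambda$ points on that level and only $\kappa^+$ conditions, so a $\Delta$-system refinement using $\kappa^{<\kappa}=\kappa$ and $2^\kappa=\kappa^+$ gives no control whatsoever over the pair $\{\rho(x),\rho(y)\}\in[\lambda]^2$; ``sending remaining positions to pairwise disjoint blocks'' is not something you can do, since the block of a point is determined by the point. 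What the paper actually uses here is an extra combinatorial object in the ground model: a $\kappa^+$-\emph{strongly unbounded function} $\mc F\colon[\lambda]^2\to\kappa^+$, which is wired into the very definition of the poset so that the permissible location of $\ifu\{x,y\}$ depends on $\mc F\{\rho(x),\rho(y)\}$. Strong unboundedness then guarantees that among $\kappa^+$ refined conditions one can find a \emph{good pair} $\{r_\nu,r_\mu\}$ for which all relevant $\mc F$-values are large enough to create room for the amalgam; one does \emph{not} get that all isomorphic-over-$R$ pairs are compatible.

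Accordingly, the proof is a two-step iteration: first force (via Koszmider's theorem, using $\lambda^{\kappa^+}=\lambda$) to obtain such an $\mc F$ on $\lambda$ while preserving cardinals and $\kappa^{<\kappa}=\kappa$; then, in that extension, force with the poset built from $\mc F$. Your single-step sketch, with the strongly unbounded function absent, leaves exactly the step where (P5) must be verified for cross pairs on levels of cofinality $\kappa^+$ unaccounted for, and there is no evident substitute. The reduction to a ``good'' space and the universality argument via taking $\{\alpha\}\times f(\alpha)$ as the open subspace are, on the other hand, essentially the same as the paper's Proposition~\ref{pr:ltd}.
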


\begin{definition}\label{goodLCS} Let $\kappa<\lambda$ be cardinals, $\delta$ be an ordinal, and $A\subs \delta$.
An LCS space $X$ of height ${\delta}$  is called {\em $(\kappa,\lambda,\delta,A)$-good}  iff
 there is an open  subspace $Y\subs X$
such that
\begin{enumerate}[(1)]
\item $CS(Y)=\conseq {\oo}{\delta}$,\smallskip
\item $\lev{{\zeta}}{Y}=\lev{{\zeta}}{X}$, and so  $|\lev{{\zeta}}{X}|={\kappa}$, for ${\zeta}\in {\delta}\setm A$,\smallskip
\item $|\lev{{\zeta}}{X}|={\lambda}$  for ${\zeta}\in  A$,\smallskip
\item for $\zeta \in A$ the set $Z_{\zeta}=\lev {<{\zeta}}Y\cup \lev {\zeta}X$ is an open subspace of $X$
such that \smallskip
\begin{enumerate}[(a)]
 \item $\lev{\xi}{Z_{\zeta}}=\lev{\xi}Y$ for ${\xi}< {\zeta}$,\smallskip
 \item $\lev{{\zeta}}{Z_{\zeta}}=\lev {\zeta}X$.
\end{enumerate}
\end{enumerate}
\end{definition}

Theorem \ref{tm:mainnew} follows immediately from Koszmider's Theorem, 
Theorem \ref{tm:mainthm2} and Proposition \ref{pr:ltd} below.

The following result of Koszmider can be obtained by 
putting together \cite[Fact 32 and Theorem 33]{Ko}:

\begin{definition}[See \cite{Ko1,Ko}]
Assume that ${\kappa}<{\lambda}$ are infinite cardinals. We say that a function $\mc F :
[{\lambda}]^2\longrightarrow {\kappa}^+$ is a ${\kappa}^+$-{\em strongly unbounded
function on} ${\lambda}$ iff for every ordinal $\vartheta < {\kappa}^+$  and for every
 family $\mc A\subset [{\lambda}]^{<{\kappa}}$ of
pairwise disjoint sets with $|\mc A|={\kappa}^+$,
 there are different $a,b\in \mc A$
such that $\mc 
F\{{\alpha},{\beta}\}> \vartheta$ for every ${\alpha}\in a$ and $ {\beta}\in
b$. 
\end{definition}

\begin{qtheorem}
If ${\kappa},{\lambda}$ are infinite cardinals such that ${\kappa}^{++} \leq
{\lambda}$, ${\kappa}^{<{\kappa}}={\kappa}$, $2^{{\kappa}}= {\kappa}^+$ and $
{\lambda}^{\kappa^+}={\lambda}$, then 
in some cardinal preserving generic extension 
${\kappa}^{<{\kappa}}={\kappa}$, ${\lambda}^{\kappa}={\lambda}$ and 
there is  a ${\kappa}^+$- strongly unbounded function on ${\lambda}$.
\end{qtheorem}

For an ordinal ${\delta}<\ooth$
let $$\ltd=\big\{{\alpha}<{\delta}:\cf({\alpha})\in \{\oo,\oot\}\big\}.$$

\begin{theorem}\label{tm:mainthm2}
If  $\oo<{\lambda}$ are    regular cardinals with
${\oo}^{<{\oo}}={\oo}$, ${\lambda}^{\kappa}={\lambda}$,  and there is  a ${\kappa}^+$- strongly unbounded function on ${\lambda}$,
then for each ${\delta}<\ooth$ there is
a $\oo$-complete $\oot$-c.c poset $\pcal$ of cardinality $\lambda$ such that
in $V^\pcal$ we have $2^{\kappa}={\lambda}$ and
there is a $(\kappa,\lambda,\delta,\ltd)$-good space.
\end{theorem}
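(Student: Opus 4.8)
The plan is to build the LCS space $X$ of height $\delta$ by forcing with a poset $\pcal$ whose conditions are finite approximations to the standard "splitting tree" structure used in all the preceding constructions (going back to \cite{R1,KM,MS3}). A condition $p$ will carry a finite set $A_p$ of "nodes" stratified by their Cantor-Bendixson level, a partial order $\preceq_p$ on $A_p$ refining the level order and meant to become (the specialization order dual to) the topology on $X$, together with the bookkeeping data needed to guarantee that the generic space is locally compact and scattered of the prescribed height. Concretely, for each $\zeta<\delta$ we reserve an index set for level $\zeta$ of size $\kappa$ if $\zeta\notin\ltd$ and of size $\lambda$ if $\zeta\in\ltd$, and a condition decides $\preceq_p$ on finitely many of these indices. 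I would first check the easy structural requirements: that $\pcal$ is $\kappa$-complete (increasing $\kappa$-chains of finite conditions have an obvious union because each condition is finite and we only ever add nodes, never alter existing $\preceq$-relations), and that $|\pcal|=\lambda$ under $\lambda^\kappa=\lambda$ (a condition is coded by a finite subset of $\delta\times\lambda$ plus a finite relation on it, and $\lambda^{<\omega}=\lambda$).

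The heart of the argument is the $\kappa^+$-chain condition, and this is where the strongly unbounded function $\mc F:[\lambda]^2\to\kappa^+$ is used, exactly as in \cite{KM,MS3}. Given $\kappa^+$ conditions, first thin out using $\kappa^{<\kappa}=\kappa$ and a $\Delta$-system argument (the conditions are finite, so the root is finite and the "tails" are pairwise disjoint) so that the conditions look identical on the root and are isomorphic off it; the only obstruction to amalgamating two such conditions $p,q$ is that a node in $p$ above the root and a node in $q$ above the root, sitting at the "same" levels in $\ltd$, might be forced into a forbidden $\preceq$-configuration. Encoding each condition's relevant level-$\ltd$ indices as a set in $[\lambda]^{<\kappa}$ and applying strong unboundedness with $\vartheta$ chosen above all $\mc F$-values appearing inside the (fixed, size-$<\kappa$) root data, we get two conditions whose tails are "$\mc F$-separated," and $\mc F$-separation is precisely the combinatorial condition that lets us take $p\cup q$ (with the union order) as a common extension while keeping the would-be topology Hausdorff and the levels correctly sized. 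So the two conditions are compatible, giving the $\kappa^+$-c.c.

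Having established the chain condition, $\pcal$ preserves cardinals (it is $\kappa$-closed and $\kappa^+$-c.c., and $\kappa^{<\kappa}=\kappa$ so no cardinals below $\kappa$ are affected), and a nice-name count shows $2^\kappa=\lambda$ in $V^\pcal$. Then I would verify by density arguments that the generic object $G$ assembles into an LCS space $X$: density ensures every level-$\zeta$ index for $\zeta<\delta$ actually names a point, every point at level $\zeta+1$ has its $\preceq$-predecessor set topologically a neighborhood trace forcing local compactness, and a "no point is isolated too early / every level is nonempty up to $\delta$" density argument pins the height at exactly $\delta$. Finally I would exhibit the open subspace $Y$ witnessing $(\kappa,\lambda,\delta,\ltd)$-goodness: take $Y$ to be the subspace on the reserved size-$\kappa$ block of indices at every level (at levels in $\ltd$ this is a designated $\kappa$-sized sub-block), so $CS(Y)=\conseq\kappa\delta$ by construction, while at levels $\zeta\in\ltd$ the full level has size $\lambda$; conditions (2),(3),(4) are then built directly into the definition of the poset (we only ever put the "extra" $\lambda$-many points of a level $\zeta\in\ltd$ above points in $\lev{<\zeta}Y$, so $Z_\zeta=\lev{<\zeta}Y\cup\lev\zeta X$ is automatically open with the stated levels). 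I expect the main obstacle to be the amalgamation step: getting the definition of $\preceq_p$ and of "$\mc F$-separated" exactly right so that $p\cup q$ is genuinely a condition — in particular, that transitivity of the union order together with $\mc F$-separation does not accidentally create a new $\preceq$-relation linking a $p$-node below the root to a $q$-node above it (or vice versa) that would collapse a level or destroy Hausdorffness — which is the usual technical core of these forcing constructions and will require carefully arranging the root to absorb all "old" large-level indices.
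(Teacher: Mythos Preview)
Your outline has the right overall shape (force an approximation to a graded poset, use the strongly unbounded function for the chain condition, read off the good space from the generic), but there are two genuine gaps that would make the proof fail as written.

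\medskip

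\textbf{Conditions cannot be finite.} You repeatedly say the conditions are finite and compute $|\pcal|$ via $\lambda^{<\omega}$. But then $\pcal$ is not $\kappa$-complete for $\kappa>\omega$: the union of an increasing $\omega$-chain of finite conditions already has infinite support and is not a condition. The paper takes $A_p\in[\und]^{<\kappa}$, i.e.\ conditions of size $<\kappa$; $\kappa$-completeness then follows from regularity of $\kappa$, and $|\pcal|=\lambda$ uses $\lambda^\kappa=\lambda$ (not $\lambda^{<\omega}$). This is not cosmetic: once conditions have size $<\kappa$, the $\Delta$-system and isomorphism-type thinning in the $\kappa^+$-c.c.\ argument require $\kappa^{<\kappa}=\kappa$ (not just CH-type counting of finite objects), and the ``root'' $\aker$ has size $<\kappa$, not finite.

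\medskip

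\textbf{Amalgamation is not the union.} You write that after $\mc F$-separation one can ``take $p\cup q$ (with the union order) as a common extension.'' This is the step that does not go through. A condition in the paper carries, besides $\preceq_p$, an infimum function $\ifu_p$ satisfying a strong requirement (P4): every $\preceq_p$-compatible pair has a greatest lower bound in $A_p$. If $s\in A_p\setminus A_q$ and $t\in A_q\setminus A_p$ become compatible in the amalgam but have no common $\preceq$-lower bound inside $\aker$, then $A_p\cup A_q$ simply fails (P4). The paper's solution is to \emph{add new points}: for each $x$ off the kernel one inserts an auxiliary element $y_x\in B_S$ at a carefully chosen height $\beta_x$ inside the interval $[\underline\gamma(\delta_x),\gamma(\delta_x))$, and defines the common extension on $A_p\cup A_q\cup\{y_x:x\in A'\}$. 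Checking that this enlarged object is still a condition --- in particular that the new infima land in the right ``orbit'' set demanded by (P5), and that the interpolation property (P6) survives --- is the technical core and occupies most of Section~\ref{sc:mainthm2}.

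\medskip

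Relatedly, you make no mention of the interval-tree/orbit machinery $(\mathbb I,\oorbf,\eorbf,\dorbf)$. This is not optional bookkeeping: condition (P5) restricts $\pi(\ifu_p\{x,y\})$ to lie in $\dorb{x}{y}$, and the definition of $\dorbf$ is exactly where the strongly unbounded function $\mc F$ enters the poset (for $x,y$ in the same non-$S$ block with $\cf(\pi(x))=\kappa^+$, one sets $\dorb{x}{y}=o(x)\cup\{\eps{\pi(x)}{\zeta}:\zeta<\mc F\{\rho(x),\rho(y)\}\}$). The ``good pair'' condition \eqref{eq:good} obtained from strong unboundedness is what guarantees that the inserted $y_v$'s satisfy (P5) in the amalgam. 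Without this apparatus there is no way to place the new infima consistently, and your sketch gives no substitute for it.
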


We will prove Theorem \ref{tm:mainthm2} in Section 
\ref{sc:mainthm2}.

\begin{proposition}\label{pr:ltd}
If ${\kappa}<{\lambda}$ are   regular cardinals
and ${\delta}<{\kappa}^{++}$, then a  $(\kappa,\lambda,\delta,\ltd)$-good space 
is universal for 
 \begin{displaymath}
  \mc C=\{f\in {}^\delta\big([\kappa,\lambda]\cap Card\big ): f(\alpha)=\kappa \text{ whenever } cf(\alpha)<\kappa\}.
 \end{displaymath}
 \end{proposition}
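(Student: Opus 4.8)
The plan is to derive the statement directly from Definition~\ref{goodLCS}. First observe that ${\delta}<{\kappa}^{++}$ pins down $\ltd$: every ${\zeta}<{\delta}$ has $\cf({\zeta})\le|{\zeta}|\le{\kappa}^+$, so $\ltd=\{{\zeta}<{\delta}:\cf({\zeta})\in\{{\kappa},{\kappa}^+\}\}$ coincides with $\{{\zeta}<{\delta}:\cf({\zeta})\ge{\kappa}\}$, and ${\delta}\setm\ltd=\{{\zeta}<{\delta}:\cf({\zeta})<{\kappa}\}$. Fix a $(\kappa,\lambda,\delta,\ltd)$-good space $X$, with an open subspace $Y\subs X$ and sets $Z_{\zeta}$ $({\zeta}\in\ltd)$ as in the definition; by condition~(1), $\httr(Y)={\delta}$ and $|\lev{\zeta}{Y}|={\kappa}$ for every ${\zeta}<{\delta}$. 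I will use repeatedly the standard fact that if $U$ is an open subspace of an LCS space $W$ then $\lev{\alpha}{U}=U\cap\lev{\alpha}{W}$ for all ${\alpha}$ (a point of $U$ is isolated in $U$ iff it is isolated in $W$, and this propagates through the Cantor--Bendixson derivatives by induction). Since $|\lev{\zeta}{X}|\ge{\kappa}$ is infinite for ${\zeta}<{\delta}$ and $\lev{\delta}{X}=\empt$, we get $\httr(X)={\delta}$, and by conditions~(2)--(3) we have $CS(X)({\zeta})={\kappa}$ for ${\zeta}\notin\ltd$ and $CS(X)({\zeta})={\lambda}$ for ${\zeta}\in\ltd$; with the description of $\ltd$ above this shows $CS(X)\in\mc C$.

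Next fix an arbitrary $s\in\mc C$; we must find an open subspace $Z\subs X$ with $CS(Z)=s$. For each ${\zeta}\in\ltd$ choose $S_{\zeta}$ with $\lev{\zeta}{Y}\subs S_{\zeta}\subs\lev{\zeta}{X}$ and $|S_{\zeta}|=s({\zeta})$; this is possible because ${\kappa}=|\lev{\zeta}{Y}|\le s({\zeta})\le{\lambda}=|\lev{\zeta}{X}|$. Put $Z=Y\cup\bigcup_{{\zeta}\in\ltd}S_{\zeta}$. Since the levels $\lev{\xi}{X}$ are pairwise disjoint and $S_{\zeta}\subs\lev{\zeta}{X}$, the set $Z\cap\lev{\xi}{X}$ equals $\lev{\xi}{Y}=\lev{\xi}{X}$ when ${\xi}\in{\delta}\setm\ltd$ (condition~(2)), equals $S_{\xi}$ when ${\xi}\in\ltd$ (as $\lev{\xi}{Y}\subs S_{\xi}$), and is empty when ${\xi}\ge{\delta}$.

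The substantive step is that $Z$ is open in $X$. Every point of $Y$ is interior to $Z$ because $Y$ is open in $X$ and $Y\subs Z$. So fix ${\zeta}\in\ltd$ and $p\in S_{\zeta}\setm\lev{\zeta}{Y}$. By condition~(4), $Z_{\zeta}=\lev{<{\zeta}}{Y}\cup\lev{\zeta}{X}$ is an open subspace of $X$ with $\lev{\xi}{Z_{\zeta}}=\lev{\xi}{Y}$ for ${\xi}<{\zeta}$ and $\lev{\zeta}{Z_{\zeta}}=\lev{\zeta}{X}$; since the open-subspace fact forces $\lev{\xi}{Z_{\zeta}}=Z_{\zeta}\cap\lev{\xi}{X}=\empt$ for ${\xi}>{\zeta}$, the space $Z_{\zeta}$ has height ${\zeta}+1$, so its top Cantor--Bendixson level $\lev{\zeta}{X}$ is a discrete subspace of $Z_{\zeta}$. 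Hence there is $V$ open in $Z_{\zeta}$ with $V\cap\lev{\zeta}{X}=\{p\}$. Then $V\setm\{p\}\subs\lev{<{\zeta}}{Y}\subs Y\subs Z$ and $p\in S_{\zeta}\subs Z$, so $V\subs Z$, and $V$ is open in $X$ since $Z_{\zeta}$ is. Thus $p$ is interior to $Z$, and $Z$ is open.

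Finally, the open-subspace fact applied to $Z$ gives $\lev{\xi}{Z}=Z\cap\lev{\xi}{X}$, which by the previous computation has cardinality ${\kappa}$ for ${\xi}\in{\delta}\setm\ltd$, cardinality $s({\xi})$ for ${\xi}\in\ltd$, and is empty for ${\xi}\ge{\delta}$. When ${\xi}\in{\delta}\setm\ltd$ we have $\cf({\xi})<{\kappa}$, so the definition of $\mc C$ forces $s({\xi})={\kappa}$; hence $|\lev{\xi}{Z}|=s({\xi})$ for all ${\xi}<{\delta}$, these levels are infinite, and $\lev{\delta}{Z}=\empt$, so $\httr(Z)={\delta}$ and $CS(Z)=s$. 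As $s$ was arbitrary and $CS(X)\in\mc C$, $X$ is universal for $\mc C$. I expect the only genuine (if modest) difficulty to be the openness verification above — which hinges on the feature of Definition~\ref{goodLCS}(4) that inside $Z_{\zeta}$ the oversized level $\lev{\zeta}{X}$ has neighbourhoods otherwise contained in $Y$ — together with keeping the cofinality case split straight, which is exactly where ${\delta}<{\kappa}^{++}$ enters.
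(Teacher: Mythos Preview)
Your proof is correct and follows essentially the same approach as the paper: choose subsets $S_\zeta$ (the paper calls them $T_\zeta$) of the oversized levels of the right cardinality, set $Z=Y\cup\bigcup_{\zeta\in\ltd}S_\zeta$, and verify that the Cantor--Bendixson levels of $Z$ are as required. You are in fact more explicit than the paper on two points: you check $CS(X)\in\mc C$ (which the paper omits), and you spell out the openness of $Z$ via the discreteness of the top level of $Z_\zeta$, whereas the paper simply asserts that $\lev{<\zeta}{Y}\cup T_\zeta$ is open in $X$ and decomposes $Z$ as a union of such open pieces; the underlying reasoning is the same.
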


\begin{proof} 
Let $X$ be a $(\kappa,\lambda,\delta,\ltd)$-good space.
Fix $f\in \mc C$.
For ${\zeta}\in \ltd$ pick $T_{\zeta}\in \br {\lev {\zeta}X};f({\zeta});$,
and let
\begin{equation}\notag
Z=Y\cup\bigcup\{T_{\zeta}:{\zeta}\in \ltd\}.
\end{equation}
Since $\lev {<{\zeta}}Y\cup T_{\zeta}$ is an open subspace of $X$ for ${\zeta}\in \ltd$, 
for every ${\alpha}<{\delta}$ we have
\begin{equation*}
\lev {\alpha}Z=\lev {\alpha}Y\cup
\bigcup\{\lev {\alpha}{\lev {<{\zeta}}Y\cup T_{\zeta}}:
{\zeta}\in \ltd \}.
\end{equation*}
Since 
\begin{equation*}
\lev {\alpha}{\lev {<{\zeta}}Y\cup T_{\zeta}}=
\left\{
\begin{array}{ll}
\lev {\alpha}Y&\text{if ${\alpha}<{\zeta}$,}\\
T_{\zeta}&\text{if ${\alpha}={\zeta}$,}\\
\empt&\text{if ${\zeta}<{\alpha}$,}
\end{array}
\right. 
\end{equation*}
we have 
\begin{equation*}
\lev {\alpha}{Z}=
\left\{
\begin{array}{ll}
\lev {\alpha}Y&\text{if ${\alpha}\notin \ltd$,}\\
\lev {\alpha}Y\cup T_{{\alpha}}&\text{if ${\alpha}\in \ltd$.}
\end{array}
\right. 
\end{equation*}
Since $|\lev {\alpha}Y|={\kappa}$ and $|\lev {\alpha}Y\cup T_{\alpha}|=
{\kappa}+f({\alpha})=f({\alpha})$, we have 
$CS(Z)=f$, which was to be proved.
\end{proof}

\section{Proof of theorem \ref{tm:mainthm2}}\label{sc:mainthm2}

\subsection*{Graded posets}

In \cite{KM}, \cite{M}, \cite{R1} and in many other papers, the
existence of an LCS space is proved 
 in such a way that instead of constructing the space
directly,  a certain ``graded poset''  is  produced  which
guaranteed the existence of the wanted  LCS-space. 
From these results, Bagaria, \cite{Ba}, extracted
the notion of $s$-posets and established the formal connection between graded posets and LCS-spaces. 
For technical reasons, we will use a reformulation of  Bagaria's result introduced in  
\cite{S}.

If $\precc$ is an arbitrary partial order on a set $X$ then define the topology
${\tau}_{\precc}$ on $X$ generated by the family 
$\{\UU_{\precc}(x),X\setm \UU_{\precc}(x):x\in X\}$ as a subbase, where
$\uup(x)=\{y\in X:y\precc x\}$.

\begin{proposition}[{\cite[Proposition 2.1]{S}}]\label{pr:back}
Assume that $\<X,\preceq\>$ is a poset,   $\{X_{\alpha}:{\alpha}<{\delta}\}$ is a partition of $X$
and 
$i:\br X;2;\to X\cup\{undef\}$
is a function  satisfying
(\ref{p_e})--(\ref{p_l}) below:
\begin{enumerate}[(a)]
\item \label{p_e} 
if $x\in X_{\alpha}$, $y\in X_{\beta}$ and 
$x\precc y$ then either $x=y$ or ${\alpha}<{\beta}$,
\item \label{p_i}$\forall \{x,y\}\in \br X;2;$ $\big ($
$\forall z\in X$ $(z\precc x\land z\precc y)$ iff
 $z\precc i\{x,y\}$ $\big )$.
\item \label{p_l} if $x\in X_{\alpha}$ and ${\beta}<{\alpha}$
then the set $\{y\in X_{\beta}:y\precc x\}$ is infinite.
\end{enumerate}
 Then $\mc X=\<X,{\tau}_{\precc}\>$ is an LCS space
with $\lev {\alpha}{\mc X}=X_{\alpha}$ for ${\alpha}<{\delta}$.
\end{proposition}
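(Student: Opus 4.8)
The plan is to verify three properties of $\mc X=\<X,{\tau}_{\precc}\>$: that it is Hausdorff, that its Cantor--Bendixson levels are exactly the blocks $X_{\alpha}$, and that it is locally compact; scatteredness will then follow since the $\delta$-th derivative turns out empty. Hausdorffness uses none of (a)--(c): given $x\ne y$, antisymmetry of $\precc$ forbids $x\precc y\wedge y\precc x$, so we may assume $y\not\precc x$, and then $x\in\uup(x)$ and $y\in X\setm\uup(x)$ lie in disjoint subbasic open sets. For the level computation write $X_{\ge{\alpha}}=\bigcup\{X_{\beta}:{\alpha}\le{\beta}<{\delta}\}$ and let $\mc X^{({\alpha})}$ denote the ${\alpha}$-th Cantor--Bendixson derivative. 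I would prove by induction on ${\alpha}$ that $\mc X^{({\alpha})}=X_{\ge{\alpha}}$; the base and limit cases are pure set theory, and the step from ${\alpha}$ to ${\alpha}+1$ reduces to showing that the isolated points of the subspace $X_{\ge{\alpha}}$ are exactly the points of $X_{\alpha}$. One inclusion is immediate from (a): if $x\in X_{\alpha}$ then every $z\precc x$ sits on a level $\le{\alpha}$ and only $z=x$ on level ${\alpha}$, so $\uup(x)\cap X_{\ge{\alpha}}=\{x\}$; the reverse inclusion — that $X_{\ge{\alpha}}$ has no other isolated points — is the crux, treated next.

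So let $x\in X_{\beta}$ with ${\beta}>{\alpha}$; we must show $x$ is not isolated in $X_{\ge{\alpha}}$. A basic neighbourhood of $x$ has the form $U=\bigcap_{k}\uup(x_k)\cap\bigcap_{l}(X\setm\uup(y_l))$ with $x\precc x_k$ (so $\uup(x)\subseteq\uup(x_k)$) and $x\not\precc y_l$ for all $l$; by (b) each $\uup(x)\cap\uup(y_l)$ is either empty or equals $\uup(w_l)$ with $w_l:=i\{x,y_l\}\precc x$, and $w_l\ne x$ (otherwise $x\precc y_l$), so $w_l$ lies strictly below $x$. Hence it suffices to produce some $y\in\{z\in X_{\alpha}:z\precc x\}\setm\bigcup_l\uup(w_l)$: such $y$ satisfies $y\precc x\precc x_k$, so $y\in\uup(x_k)$, while $y\precc y_l$ together with $y\precc x$ would force $y\precc w_l$ by (b); thus $y\in U\cap X_{\ge{\alpha}}$ and $y\ne x$. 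The whole proposition therefore reduces to the following lemma, which I expect to be the main obstacle: \emph{for every $x\in X_{\beta}$, every ${\gamma}<{\beta}$, and every finite $W\subseteq X$ all of whose members lie strictly below $x$, the set $\{y\in X_{\gamma}:y\precc x\}\setm\bigcup_{w\in W}\uup(w)$ is infinite.}

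I would prove this lemma by induction on ${\beta}$, and this is where (b) and (c) interact. If every $w\in W$ sits on a level $\le{\gamma}$, then by (a) each $\uup(w)$ meets $X_{\gamma}$ in at most one point, while $\{y\in X_{\gamma}:y\precc x\}$ is infinite by (c), so the difference is infinite. Otherwise let ${\gamma}'$ be the largest level occupied by a member of $W$, so ${\gamma}<{\gamma}'<{\beta}$. By (c) there are infinitely many $z\in X_{{\gamma}'}$ with $z\precc x$, and by (a) the only such $z$ that can lie in $\uup(w)$ for some $w\in W$ are those equal to such a $w$ (at most $|W|$ of them), so we may pick $z$ avoiding $\bigcup_{w\in W}\uup(w)$. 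For each $w\in W$, (b) gives that $\uup(z)\cap\uup(w)$ is empty or equals $\uup(v_w)$ with $v_w:=i\{z,w\}\precc z$, and $v_w\ne z$ since $z\not\precc w$; thus each $v_w$ lies strictly below $z$. Applying the inductive hypothesis to $z\in X_{{\gamma}'}$, the level ${\gamma}<{\gamma}'$, and the finite set of these $v_w$, we get infinitely many $y\in X_{\gamma}$ with $y\precc z\precc x$ and $y\notin\bigcup_w\uup(v_w)$; for such $y$, $y\precc w$ with $y\precc z$ would give $y\precc v_w$ by (b), so in fact $y\notin\bigcup_{w\in W}\uup(w)$. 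This infinite set witnesses the lemma and closes the induction.

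Finally, for local compactness I would show that each $\uup(x)$ — which is clopen, since both it and its complement are subbasic — is compact, by induction on the level of $x$. Given a cover of $\uup(x)$ by basic open sets, choose a member $V$ containing $x$; writing $V=\bigcap_k\uup(x_k)\cap\bigcap_l(X\setm\uup(y_l))$ one has $x\precc x_k$, hence $\uup(x)\subseteq\uup(x_k)$, so $\uup(x)\setm V=\bigcup_l\bigl(\uup(x)\cap\uup(y_l)\bigr)=\bigcup_l\uup(w_l)$ with each $w_l=i\{x,y_l\}$ strictly below $x$, as above. By the inductive hypothesis this is a finite union of compact sets; it is also closed in $X$ (being $\uup(x)\cap(X\setm V)$), hence compact, hence covered by finitely many members of the given cover, and adjoining $V$ gives a finite subcover of $\uup(x)$. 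Thus $\mc X$ is locally compact and Hausdorff; since $X=\bigcup\{X_{\alpha}:{\alpha}<{\delta}\}$ the level computation gives $\mc X^{({\delta})}=X_{\ge{\delta}}=\empt$, so $\mc X$ is scattered, and $\lev{\alpha}{\mc X}=\mc X^{({\alpha})}\setm\mc X^{({\alpha}+1)}=X_{\ge{\alpha}}\setm X_{\ge{\alpha}+1}=X_{\alpha}$ for ${\alpha}<{\delta}$, as claimed.
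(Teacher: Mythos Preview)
The paper does not supply a proof of this proposition; it is quoted verbatim from \cite{S} and used as a black box, so there is no in-paper argument to compare against.

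Your proof is correct and is the natural direct verification. Hausdorffness from antisymmetry, the transfinite identification $\mc X^{(\alpha)}=X_{\ge\alpha}$, and the inductive compactness of the clopen cones $\uup(x)$ are all carried out cleanly. The one nontrivial point is your lemma --- that for $x\in X_{\beta}$, $\gamma<\beta$, and finite $W$ strictly below $x$, the set $\{y\in X_{\gamma}:y\precc x\}\setm\bigcup_{w\in W}\uup(w)$ is infinite --- and your induction on $\beta$ handles it properly: when some $w$ sits above level $\gamma$ you pass through an intermediate $z\in X_{\gamma'}$ with $z\precc x$ and $z\notin\bigcup_{w}\uup(w)$, then replace each $w$ by $i\{z,w\}$ (strictly below $z$) and invoke the hypothesis at $\gamma'<\beta$. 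This is exactly how condition~(b) is meant to be exploited. One cosmetic remark: in the local-compactness step the phrase ``it is also closed in $X$, hence compact'' is unnecessary, since a finite union of compact sets is already compact; but it does no harm.
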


\begin{definition}\label{pr:back2}
 Let $\kappa<\lambda$ be cardinals, $\delta$ be an ordinal, and $A\subs \delta$.
Assume that $\<X,\preceq\>$ is a poset,   $\{X_{\alpha}:{\alpha}<{\delta}\}$ is a partition of $X$
and  
$i:\br X;2;\to X\cup\{undef\}$
is a function  satisfying
conditions (\ref{p_e})--(\ref{p_l}) from  Proposition \ref{pr:back}.

We say that poset $\<X,\preceq\>$ is 
 {\em $(\kappa,\lambda,\delta,A)$-good} iff
there is a set $Y\subs X$ 
 such that: 
\begin{enumerate}[(a)]\setcounter{enumi}{3}
 \item \label{d:yopen} if $x_0\preceq x_1$, then either $x_0=x_1$ or $x_0\in Y$;
 \item \label{e:notinA} $X_{\zeta}\in \br  Y;{\kappa};$ for ${\zeta}\in {\delta}\setm A$;
 \item \label{f:inA}  $|X_{\zeta}|={\lambda}$ and $|X_{\zeta}\cap Y|={\kappa}$ for ${\zeta}\in A$.
\end{enumerate}
 \end{definition}

\begin{proposition}\label{pr:ord2sc}
Let $\kappa<\lambda$ be cardinals, $\delta$ be an ordinal, and $A\subs \delta$.
If $\<X,\preceq \>$ is a $(\kappa,\lambda,\delta,A)$-good poset, then 
$\mc X=\<X,{\tau}_{\precc}\>$ is a $(\kappa,\lambda,\delta,A)$-good space.
\end{proposition}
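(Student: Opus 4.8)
The plan is to apply Proposition \ref{pr:back} to the given $(\kappa,\lambda,\delta,A)$-good poset $\langle X,\preceq\rangle$, which immediately yields that $\mc X=\langle X,\tau_\precc\rangle$ is an LCS space with $\lev{\alpha}{\mc X}=X_\alpha$ for $\alpha<\delta$ (since the good poset comes equipped with a partition $\{X_\alpha:\alpha<\delta\}$ and a function $i$ satisfying (\ref{p_e})--(\ref{p_l})). Note that this forces $\htt(\mc X)=\delta$, matching the height requirement in Definition \ref{goodLCS}. It then remains to produce an open subspace $Y$ of $\mc X$ witnessing conditions (1)--(4) of Definition \ref{goodLCS}, and the natural candidate is the set $Y\subs X$ supplied by Definition \ref{pr:back2}.

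First I would check that $Y$ is open in $\tau_\precc$. By condition (\ref{d:yopen}), if $x_0\preceq x_1$ with $x_1\in Y$ and $x_0\neq x_1$ then $x_0\in Y$; but I actually need the other direction for openness — that $Y$ is downward-closed under $\preceq$ is not what (\ref{d:yopen}) gives. Here the key observation is that in $\tau_\precc$ the set $\uup(x)=\{y:y\precc x\}$ is open, so $\mc X\setm Y$ is open as soon as it is upward-closed, i.e. as soon as $Y$ is downward-closed; and condition (\ref{d:yopen}) says precisely that whenever $x_0\precc x_1$ and $x_0\notin Y$ then $x_0=x_1$, equivalently, $X\setm Y$ is upward closed (if $x_0\in X\setm Y$ and $x_0\precc x_1$ then $x_0=x_1\in X\setm Y$). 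Hence $X\setm Y$ is a union of sets of the form... more carefully: $X\setm Y=\bigcup_{x\in X\setm Y}\{y:x\precc y\}$? That is upward closure, which need not be open. The clean argument is: $Y=\bigcup_{x\in Y}\uup(x)$ provided every $x\in Y$ has $\uup(x)\subs Y$, which is exactly downward-closedness of $Y$, and that follows from (\ref{d:yopen}) by contraposition. So $Y$ is open.

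Next I would identify the Cantor–Bendixson levels of the subspace $Y$. Since $Y$ is open in $\mc X$ and $\lev\alpha{\mc X}=X_\alpha$, a standard fact (open subspaces of LCS spaces are LCS and their CB-levels are the traces of the ambient levels, because the CB-derivative commutes with restriction to open sets) gives $\lev\alpha Y=Y\cap X_\alpha$ for all $\alpha$. Combined with (\ref{e:notinA}), for $\zeta\in\delta\setm A$ we get $Y\cap X_\zeta=X_\zeta$, hence $\lev\zeta Y=X_\zeta=\lev\zeta{\mc X}$ with $|\lev\zeta{\mc X}|=\kappa$ — this is (2) of Definition \ref{goodLCS} — and together with (\ref{f:inA}), $|\lev\zeta Y|=|Y\cap X_\zeta|=\kappa$ for $\zeta\in A$, so $CS(Y)=\conseq\kappa\delta$, which is (1). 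Condition (3) is immediate from (\ref{f:inA}): $|\lev\zeta{\mc X}|=|X_\zeta|=\lambda$ for $\zeta\in A$. For (4), fix $\zeta\in A$ and set $Z_\zeta=\lev{<\zeta}Y\cup\lev\zeta{\mc X}=\bigcup_{\xi<\zeta}(Y\cap X_\xi)\cup X_\zeta$; I must show $Z_\zeta$ is open in $\mc X$ and has the stated levels. Openness of $Z_\zeta$ follows by the same downward-closure argument applied to $Z_\zeta$: using (\ref{p_e}), any $\precc$-predecessor of a point of $X_\zeta$ lies in some $X_\xi$ with $\xi\le\zeta$, and if it lies below a point and is $\neq$ it, then $\xi<\zeta$ and it lies in $Y$ by (\ref{d:yopen}), hence in $Z_\zeta$; a predecessor of a point of $Y\cap X_\xi$ with $\xi<\zeta$ lies in $Y\cap X_{\xi'}$ with $\xi'\le\xi<\zeta$ by (\ref{d:yopen}) and (\ref{p_e}), hence in $Z_\zeta$. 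So $Z_\zeta$ is downward-closed, thus open. Finally, again using that CB-levels of an open subspace are traces of ambient levels, $\lev\xi{Z_\zeta}=Z_\zeta\cap X_\xi$, which equals $Y\cap X_\xi=\lev\xi Y$ for $\xi<\zeta$ (giving (4a)) and equals $X_\zeta=\lev\zeta{\mc X}$ for $\xi=\zeta$ (giving (4b)).

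The main obstacle is the bookkeeping around openness: one has to be careful that the relevant sets ($Y$ and each $Z_\zeta$) are \emph{downward}-closed under $\precc$, since that — not upward-closedness — is what makes them open in $\tau_\precc$, and condition (\ref{d:yopen}) has to be read in contrapositive form to supply this. The remaining ingredient, that $\lev\alpha W=W\cap\lev\alpha{\mc X}$ for an open subspace $W$ of an LCS space, is routine and I would simply cite it or prove it in one line by induction on $\alpha$ using that $W\cap\overline{A}^{\mc X}=\overline{W\cap A}^{W}$ when $W$ is open.
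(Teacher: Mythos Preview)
Your proposal is correct and follows essentially the same approach as the paper: apply Proposition~\ref{pr:back} to get $\lev{\alpha}{\mc X}=X_\alpha$, use condition~(\ref{d:yopen}) to see that $Y$ (and each $Z_\zeta$, via (\ref{p_e}) and (\ref{d:yopen})) is downward-closed hence open, and then read off the Cantor--Bendixson levels of these open subspaces as traces of the ambient levels. The paper's proof is terser---it simply asserts openness from (\ref{d:yopen}) and that $\uup(s)\subs Z_\zeta$ from (\ref{p_e}) and (\ref{d:yopen})---but the argument is the same.
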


\begin{proof}[Proof of Proposition \ref{pr:ord2sc}]
 By Proposition \ref{pr:back}, $\mc X=\<X,{\tau}_{\precc}\>$ is an LCS space
with $\lev {\alpha}{\mc X}=X_{\alpha}$ for ${\alpha}<{\delta}$.

By (\ref{d:yopen}), the subspace $Y$ is open, and so $\lev{{\zeta}}{Y}=\lev {\zeta}X\cap Y$.
Thus $|\lev {\zeta}Y|={\kappa}$ by (\ref{e:notinA}) and (\ref{f:inA}). So $CS(Y)=\conseq {\oo}{\delta}$, i.e. \ref{goodLCS}(1) holds.

If ${\zeta}\in {\delta}\setm A$, then  $\lev{\zeta}X\subs Y$ by (\ref{e:notinA}), so       $\lev {\zeta}X=\lev{\zeta}Y$. Thus   \ref{goodLCS}(2) holds.
Moreover $\lev{{\zeta}}{Y}=\lev {\zeta}X\cap Y$.

\ref{goodLCS}(3) follows from (\ref{f:inA}).

Also, 
for ${\zeta}\in A$
(\ref{p_e})  and (\ref{d:yopen}) imply that $\uup(s)\subs Z_{\zeta}$ for $s\in Z_{\zeta}$, and so  $Z_{\zeta}$ is an open subspace of $\mc X$.
Hence $\lev{\xi}{Z_{\zeta}}=\lev {\xi}X\cap Z_{\zeta}=X_{\xi}\cap Z_{\zeta}.$

Thus $\lev{\xi}{Z_{\zeta}}=\lev {\xi}Y$ for ${\xi}<{\zeta}$, and 
$\lev{{\zeta}}{Z_{\zeta}}=X_{\zeta}$. So \ref{goodLCS}(4) also holds.

Thus $\mc X$ is a $(\kappa,\lambda,\delta,A)$-good space.
\end{proof}

So, instead of Theorem \ref{tm:mainthm2}, it is enough to prove Theorem \ref{tm:mainthm3} below. 
\begin{theorem}\label{tm:mainthm3}
If  $\oo<{\lambda}$ are    regular cardinals with
${\oo}^{<{\oo}}={\oo}$, ${\lambda}^{\kappa}={\lambda}$,  and there is  a ${\kappa}^+$- strongly unbounded function on ${\lambda}$,
then for each ${\delta}<\ooth$ there is
a $\oo$-complete $\oot$-c.c poset $\pcal$ of cardinality $\lambda$ such that
in $V^\pcal$ we have $2^{\kappa}={\lambda}$ and 
there is a $(\kappa,\lambda,\delta,\ltd)$-good poset.
\end{theorem}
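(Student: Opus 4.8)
The plan is to build the poset $\pcal$ by a finite-support (or $\kappa$-support) iteration-style forcing whose conditions are finite approximations to the desired $(\kappa,\lambda,\delta,\ltd)$-good poset $\<X,\preceq\>$. The underlying set will be $X=\bigcup_{\alpha<\delta}X_\alpha$ with $X_\alpha=\{\alpha\}\times\lambda$ for $\alpha\in\ltd$ and $X_\alpha=\{\alpha\}\times\kappa$ otherwise, and $Y=\bigcup_{\alpha<\delta}(\{\alpha\}\times\kappa)$; thus (e)--(f) of Definition \ref{pr:back2} are built into the bookkeeping from the start, and the real content is to generically produce a partial order $\preceq$ on $X$ together with the meet function $i$ satisfying (a)--(d). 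A condition $p$ should be a finite partial specification: a finite set $a_p\subs X$, a partial order $\preceq_p$ on $a_p$, and a partial function $i_p$ on $\br a_p;2;$ into $a_p$, required to satisfy finite versions of (\ref{p_e}), (\ref{p_i}) and (\ref{d:yopen}), plus some promise-style side conditions (as in \cite{KM}, \cite{M}, \cite{R1}) recording that certain ``infinitely many predecessors'' requirements in (\ref{p_l}) will be met. Ordering is by end-extension: $q\le p$ iff $a_q\sups a_p$, $\preceq_q\restriction a_p=\preceq_p$, $i_q\restriction\br a_p;2;=i_p$, and the side conditions of $p$ are respected.

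The next step is to verify the chain-condition and completeness. For $\kappa$-completeness one wants conditions closed under unions of decreasing $\om$-sequences, which forces the ``size $<\kappa$'' bound on the working parts (or on the promises) rather than finiteness — this is where $\kappa^{<\kappa}=\kappa$ enters, guaranteeing that there are only $\lambda^\kappa=\lambda$ many conditions, hence $|\pcal|=\lambda$ and $2^\kappa=\lambda$ in $V^\pcal$ (one also checks $\pcal$ adds the right number of subsets of $\kappa$ by a counting/nice-name argument). The $\kappa^+$-c.c.\ is the crucial and expected-to-be-hardest point: given $\kappa^+$ conditions, apply a $\Delta$-system argument (using $\kappa^{<\kappa}=\kappa$) to thin to a family with common root and isomorphic ``types'', and then use the $\kappa^+$-strongly unbounded function $\mc F$ on $\lambda$ precisely to find two conditions whose non-root parts can be amalgamated: the strong unboundedness, applied to the family of non-root $\lambda$-coordinates (which are small sets, of size $<\kappa$) and to a bound $\vartheta$ coming from the finitely/$<\kappa$-many relevant ``levels'' in the promises, yields a pair $a,b$ with $\mc F$-values large enough that no forbidden $\preceq$-relation need be added between the two sides, so $p\cup q$ (suitably completing $\preceq$ and $i$) is a common extension. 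I expect this amalgamation lemma — showing that the two conditions are genuinely compatible once $\mc F$ separates their tails — to be the main obstacle, since one must simultaneously extend $\preceq$ to a partial order, define $i$ on new pairs so that (\ref{p_i}) holds, and keep (\ref{d:yopen}) and all promises intact.

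Finally I would run the standard genericity/density arguments in $V^\pcal$: for each $x\in X_\alpha$ and each $\beta<\alpha$, density of the conditions forcing at least $n$ further $\preceq$-predecessors of $x$ in $X_\beta$ (for every $n$, or rather cofinally many in $\kappa$, using the promises) gives that $\{y\in X_\beta:y\preceq x\}$ is infinite, i.e.\ (\ref{p_l}); density of conditions deciding $i\{x,y\}$ for every pair gives a total $i:\br X;2;\to X\cup\{\undef\}$ satisfying (\ref{p_i}); and (\ref{p_e}), (\ref{d:yopen}) are preserved because they are closed conditions on the generic object. Together these show the generic $\<X,\preceq\>$ is a $(\kappa,\lambda,\delta,\ltd)$-good poset, which by Proposition \ref{pr:ord2sc} yields the good space and completes the proof of Theorem \ref{tm:mainthm3}, hence of Theorem \ref{tm:mainthm2}.
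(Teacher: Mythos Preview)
Your overall strategy---force with $<\kappa$-sized approximations $\<A,\preceq,i\>$, get $\kappa$-completeness for free, use a $\Delta$-system plus $\mc F$ for the $\kappa^+$-c.c., and run density arguments---matches the paper. But two of the load-bearing steps are misdescribed in ways that would not survive an actual write-up.

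First, the ``promise-style side conditions'' you allude to are not what the paper (or \cite{M}) uses; the conditions carry no promises. Instead there is a fixed cofinal tree of intervals $\mathbb I$ on $\delta$, and each element of $X$ gets an \emph{orbit} $\orb{x}\subs\pi(x)$ of size $\le\kappa$. A condition must satisfy an interpolation axiom (P6) along $\mathbb I$ and, crucially, an axiom (P5) saying that whenever $x,y$ are incomparable but compatible, $\pi(i\{x,y\})$ lies in a prescribed set $\dorb{x}{y}$ of size $\le\kappa$. The strongly unbounded function $\mc F$ is hard-wired into the definition of $\dorb{x}{y}$ for pairs in the same block $B_\zeta$ with $\cf(\zeta)=\kappa^+$: it is part of the \emph{ground-model} specification of which triples $\<A,\preceq,i\>$ are conditions, not a separation device applied after the $\Delta$-system thinning. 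Your description (``$\mc F$-values large enough that no forbidden $\preceq$-relation need be added'') misses the point: by (P3) there are essentially no new $\preceq$-relations to add between the non-root parts anyway; the entire difficulty is defining $i$ on cross-pairs so that (P5) still holds, and the strong unboundedness of $\mc F$ is exactly what guarantees (via the ``good pair'' inclusion $\eorb{\delta_i}\cap\gamma(\delta_i)\subs\dorb{x_{\nu,i}}{x_{\mu,j}}$) that there is room for those infima.

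Second, the amalgamation is not ``$p\cup q$ suitably completed''. Even after all the thinning, the paper must adjoin a fresh set $Y=\{y_x:x\in A'\}$ of helper points (placed at carefully chosen heights $\beta_x$ inside the intervals $[\underline\gamma(\delta_x),\gamma(\delta_x))$) to serve as the new infima $i\{s,t\}$ when the natural candidate $\ip\{g(s),g(t)\}$ falls outside the kernel. Verifying that the resulting $\<A_\nu\cup A_\mu\cup Y,\preceq,i\>$ satisfies (P4)--(P6) is the bulk of the argument and depends entirely on the orbit/interval bookkeeping you have not set up. Without that machinery, the amalgamation lemma as you state it is not provable.
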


So, assume that ${\kappa}$, ${\lambda}$ and ${\delta}$ satisfy the hypothesis of Theorem 
\ref{tm:mainthm3}.  In order to construct the required poset $\mc P$, first 
we need to recall some notion from \cite[Section 1]{M}.

\subsection*{Orbits}
If ${\alpha}\le {\beta}$ are ordinals let
\begin{equation*}
[{\alpha},{\beta})=\{{\gamma}:{\alpha}\le {\gamma}<{\beta}\}.
\end{equation*}
We say that $I$ is an {\em ordinal interval} iff there are
ordinals ${\alpha}$ and ${\beta}$ with $I=[{\alpha},{\beta})$.
Write $I^-={\alpha}$ and $ I^+={\beta}$.

If $I=[{\alpha},{\beta})$ is an ordinal interval let $\incof
I=\{\veps I{\nu}:{\nu}<\cf({\beta})\}$ be a cofinal closed subset
of $I$ having order type $\cf ({\beta})$ with ${\alpha} = \veps
I{0}$ and put
\begin{equation*}
\intpart I=\{[\veps I{\nu},\veps I{\nu+1}):{\nu}<\cf{\beta}\}
\end{equation*}
 provided ${\beta}$ is a limit ordinal,
and let $\incof I =\{{\alpha},{\beta}'\}$
 and put
\begin{equation*}
\intpart I=\{[{\alpha},{\beta}'),\{{\beta}'\}\}
  \end{equation*}
provided ${\beta}={\beta}'+1$ is a successor ordinal.

\vspace{1mm} Define $\{\ical_n:n<{\omega}\}$ as follows:
\begin{equation*}
\ical_0=\{[0,{\delta})\} \text{ and }
\ical_{n+1}=\bigcup\{\intpart I:I\in \ical_n\}.
\end{equation*}
Put $\mathbb I=\bigcup\{\ical_n:n<{\omega}\}$. 

Note that $\mathbb
I$ is a {\em cofinal tree of intervals} in the sense defined in \cite{M}.
So, the following conditions are
satisfied:

\begin{enumerate}[(i)]

\item  For every $I,J\in {\mathbb I}$, $I\subset J$ or
$J\subset I$ or $I\cap J = \emptyset$.

\item If $I,J$ are different elements of ${\mathbb I}$ with
$I\subset J$ and $J^+$ is a limit ordinal, \mbox{ then  $I^+ < J^+$ }.

 \item $\ical_n$ partitions $[0,\delta )$ for each $n < \omega$.

\item  $\ical_{n + 1}$ refines $\ical_n$ for each $n <\omega$.

\item  For every $\alpha <{\delta}$ there is an $I\in {\mathbb I}$
such that $I^- = \alpha$.
\end{enumerate}

Then, for each ${\alpha}<{\delta}$ we define
\begin{equation*}
\nn {\alpha}=\min\{n:\exists I\in \ical_n \mbox{ with } \
I^-={\alpha}\},
\end{equation*}
and for each  ${\alpha}<{\delta}$ and $n<{\omega}$ we pick
\begin{equation*}
\contint {\alpha}n\in \ical_n \text{ such that } {\alpha}\in
\contint {\alpha}n.
\end{equation*}

\begin{proposition}
\label{Proposition-2.1} Assume that $\zeta <
\delta$ is a limit ordinal.  Then, there is 
 an interval $$J({\zeta})\in \ical_{n(\zeta)-1}\cup\ical_{n(\zeta)}$$ 
such that $\zeta$ is a limit point of $E(J(\zeta))$. 

If $cf(\zeta) = \oot$, then $J(\zeta)\in \ical_{n(\zeta)}$ and 
$J(\zeta)^+=\zeta$.
\end{proposition}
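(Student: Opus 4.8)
The plan is to unwind the recursive definition of $\mathbb I$ and the function $\nn\cdot$, and to exploit property (ii) of the cofinal tree of intervals together with the fact that $\incof I$ always has order type $\cf(I^+)$. First I would observe that, since $\zeta<\delta$ is a limit ordinal, there is some interval in $\mathbb I$ with left endpoint $\zeta$; let $n=\nn\zeta$ be minimal such that $\zeta=I^-$ for some $I\in\ical_n$. By property (v) this $n$ is well-defined, and since $[0,\delta)$ has left endpoint $0$ (or a successor, handled trivially) we have $n\ge 1$. Now consider $\contint\zeta{n-1}\in\ical_{n-1}$, the unique interval in level $n-1$ containing $\zeta$; call it $K$. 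Since $\zeta$ is not the left endpoint of any interval in $\ical_{n-1}$ (by minimality of $n$) but is interior to $K$, we have $K^-<\zeta<K^+$ — in particular $K^+$ is a limit ordinal (it cannot be $\zeta'+1$ with $\zeta'<\zeta$, as then the right half $\{\zeta'\}=\{K^+-1\}$ of $\intpart K$ would force $\zeta$ to be a left endpoint or past $K^+$).

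Next I would analyze how $\zeta$ sits inside the partition $\intpart K=\{[\veps K\nu,\veps K{\nu+1}):\nu<\cf(K^+)\}$ of $K$. Since $\zeta=I^-$ for some $I\in\ical_n=\bigcup\{\intpart J:J\in\ical_{n-1}\}$ and $\zeta\in K$, necessarily $\zeta$ is a left endpoint of one of the pieces of $\intpart K$, i.e. $\zeta=\veps K\nu$ for some $\nu$. The key case split is whether $\nu$ is a successor or a limit ordinal (it is not $0$ since $\veps K 0=K^-<\zeta$). If $\nu=\nu_0+1$ is a successor, then $\zeta$ is the right endpoint of $[\veps K{\nu_0},\veps K{\nu_0+1})$; but then applying the same reasoning one level down to this subinterval $[\veps K{\nu_0},\zeta)\in\ical_n$ — whose right endpoint $\zeta$ is a limit ordinal — we get that $\zeta$ is a limit point of $\incof{[\veps K{\nu_0},\zeta)}$, and this interval lies in $\ical_n=\ical_{\nn\zeta}$, giving $J(\zeta)=[\veps K{\nu_0},\zeta)$. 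If $\nu$ is a limit ordinal, then $\zeta=\veps K\nu=\sup\{\veps K{\nu'}:\nu'<\nu\}$ is already a limit point of $\incof K$ itself, and we take $J(\zeta)=K\in\ical_{n-1}=\ical_{\nn\zeta-1}$. Either way $J(\zeta)\in\ical_{\nn\zeta-1}\cup\ical_{\nn\zeta}$ and $\zeta$ is a limit point of $\incof{J(\zeta)}$, which is the first assertion.

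For the second assertion, suppose $\cf(\zeta)=\oot$. I would argue that the successor case $\nu=\nu_0+1$ above is forced: if instead $\nu$ were a limit ordinal, then $\cf(\zeta)=\cf(\nu)\le|\nu|<\cf(K^+)$, and since $\incof K$ is closed in $K$ with order type $\cf(K^+)$, this bounds $\cf(\zeta)$ below $\cf(K^+)$; but one then still needs to push this to a contradiction with $\cf(\zeta)=\oot$, which I expect to require tracking how the cofinalities $\cf(I^+)$ for $I\in\mathbb I$ are constrained — here I would use that $\delta<\ooth$, so every $\cf(I^+)\in\{1,\oo,\oot\}$ is at most $\oot$, combined with property (ii) to show that an interval realizing cofinality $\oot$ at a limit ordinal must have that ordinal as its actual right endpoint. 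Concretely, in the successor-case interval $J(\zeta)=[\veps K{\nu_0},\zeta)\in\ical_{\nn\zeta}$ we have $J(\zeta)^+=\zeta$ directly by construction, so $\cf(J(\zeta)^+)=\cf(\zeta)=\oot$ and everything is consistent; the content is to rule out the limit-$\nu$ alternative. \textbf{The main obstacle} I anticipate is precisely this last step: showing that when $\cf(\zeta)=\oot$ the ``absorption'' of $\zeta$ as a limit point of some $\incof K$ at level $\nn\zeta-1$ cannot happen, so that $\zeta$ must appear as the genuine right endpoint of a level-$\nn\zeta$ interval. This should follow from a careful induction on $n$ showing that the right endpoints of intervals in $\ical_n$ of cofinality $\oot$ are exactly certain ordinals of cofinality $\oot$ reached in a controlled way, using $\oo^{<\oo}=\oo$ and $\delta<\ooth$ to keep all relevant cofinalities in $\{\oo,\oot\}$; the routine but slightly delicate bookkeeping is in matching the level of $J(\zeta)$ to $\nn\zeta$ exactly rather than just to $\nn\zeta\pm O(1)$.
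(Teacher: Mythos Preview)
Your approach is the same as the paper's: set $K=\contint{\zeta}{\nn\zeta-1}$, observe that $\zeta=\veps K\nu$ for some $\nu>0$, and split on whether $\nu$ is a successor or a limit. The first assertion is handled correctly.

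Where you go wrong is in the second assertion. You already have the complete argument in your hands and don't realize it. You write that if $\nu$ is a limit then $\cf(\zeta)=\cf(\nu)\le|\nu|<\cf(K^+)$, and you note that $\delta<\ooth$ forces $\cf(K^+)\le\oot$. But these two facts together \emph{are} the contradiction: $\cf(\zeta)<\cf(K^+)\le\oot$ gives $\cf(\zeta)\le\oo$, contradicting $\cf(\zeta)=\oot$. There is no further obstacle, no induction on $n$, no appeal to property~(ii). The paper phrases exactly this one-line observation as $|\incof K\cap\zeta|\le\oo$, whence a point of cofinality $\oot$ cannot be a limit point of $\incof K\cap\zeta$ and so must have an immediate predecessor there; that predecessor yields $J(\zeta)=[\xi,\zeta)\in\ical_{\nn\zeta}$ with $J(\zeta)^+=\zeta$.

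Two minor slips, neither fatal: your parenthetical claim that $K^+$ must be a limit ordinal is false (one can have $K^+=\zeta+1$, in which case $\incof K=\{K^-,\zeta\}$ and your successor case $\nu=1$ applies anyway), and your assertion that $\cf(I^+)\in\{1,\oo,\oot\}$ for $I\in\mathbb I$ is not justified --- $\cf(I^+)$ can be any regular cardinal $\le\oot$ --- but the only bound you need is $\cf(I^+)\le\oot$, which follows immediately from $I^+\le\delta<\ooth$.
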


\begin{proof}
%
%
If there is an $I \in \ical_{\nn {\zeta}}$ with $I^+={\zeta}$
then $J(\zeta)=I$. If there is no
such $I$, then ${\zeta}$ is a limit point of $\incof{\contint
{\zeta}{\nn {\zeta}-1}}$, so $J(\zeta)= \contint {\zeta}{\nn
{\zeta}-1}$.

Assume now that $\cf({\zeta})=\oot$. Then
 ${\zeta}\in \incof{\contint {{\zeta}}{\nn {\zeta}-1}}$,
but $|\incof{\contint {{\zeta}}{\nn {\zeta}-1}}\cap {\zeta}|\le
\oo$, so ${\zeta}$ can not be a limit point of $\incof{\contint
{{\zeta}}{\nn {\zeta}-1}}$. Therefore, it has a predecessor
${\xi}$ in $\incof{\contint {{\zeta}}{\nn {\zeta}-1}}$, i.e
$[{\xi},{\zeta}) \in \ical_{\nn {\zeta}}$, and so
$J(\zeta)=[{\xi},{\zeta})$ and
 $J(\zeta)\in \ical_{n(\zeta)}$.
\end{proof}

\vspace{2mm} If $\cf(J(\zeta)^+)\in \{\oo,\oot\}$, we denote by
$\{\eps {\zeta}{\nu}:{\nu}<\cf({J(\zeta)}^+)\}$  the
increasing enumeration of $\incof {J(\zeta)}$, i.e.  
$\eps {\zeta}{\nu}=\veps {J({\zeta})}{\nu}$  for 
${\nu}<\cf ({J(\zeta)^+})$.

\vspace{2mm} Now if ${\zeta}<{\delta}$, we define the {\em basic
orbit} of $\zeta$ (with respect to $\mathbb I$) as
\begin{equation*}
\oorb {\zeta}=\bigcup\{(\incof{\contint{\zeta}m}\cap {\zeta}):
m<\nn {\zeta}\}.
\end{equation*}

We refer the reader to \cite[Section1]{M} 
for some fundamental facts and examples on basic orbits. 
In particular, we have that $\alpha \in o(\beta)$ implies 
$o(\alpha)\subset o(\beta)$.

\vspace{1mm} If  ${\zeta}\in \ltd$,  
we define the {\em extended orbit} of $\zeta$ by
\begin{equation*}
\eorb {\zeta}= \oorb {\zeta}\cup (\incof {J(\zeta)}\cap {\zeta}).
\end{equation*}
Observe that if
$J({\zeta})\in \ical_{n(\zeta)-1}$ then ${\overline{o}}(\zeta) = {o}(\zeta)$.

\bigskip

The underlying set of our  poset will consist of blocks.
The following set 
$\blocks$ below serves as the index set of our blocks:
\begin{equation*}
\blocks=\{S\}\cup \ltd.
\end{equation*}
Let
\begin{equation*}
\block S={\delta}\times \oo
\end{equation*}
and
\begin{equation*}
\block {{\zeta}}=\{{\zeta}\}\times
       [\kappa,\lambda)
\end{equation*}
for ${\zeta}\in \ltd$.

The underlying set of our poset will be
\begin{equation*}
\und=\bigcup\{\block T:T\in \blocks\}.
\end{equation*}

 To obtain a $(\kappa,\lambda,\delta,\ltd)$-good poset 
we take  $Y=\block S$ and  
\begin{equation}\notag
 X_{\zeta}=\left\{ 
 \begin{array}{ll}
  \{{\zeta}\}\times {\kappa}&\text{if ${\zeta}\in {\delta}\setm \ltd$,}\\\\
  \{{\zeta}\}\times {\lambda}&\text{if ${\zeta}\in \ltd$}.
 \end{array}
 \right.
\end{equation}

Define the functions ${\pi}:\und\to {\delta}$
and  $\rho:\und\to \lambda $ by the formulas 
\begin{equation*}
{\pi} (\<{\alpha},{\nu}\>)={\alpha}
\text{ and }\rho (\<{\alpha},{\nu}\>)={\nu}.
\end{equation*}
Define
\begin{equation*}
\pib:\und\to \blocks  \text{ by the formula } x\in \block {\pib
(x)}.
\end{equation*}

Finally we define the {\em orbits} of the  elements  of $\und$ as
follows:
\begin{equation*}
\orb x =\left\{\begin{array}{ll}
\oorb {\pi(x)}&\text{ for }x\in B_S,\\
\eorb {\pi(x)}
     &\text{ for }x\in X\setm B_S,
\end{array}
\right.
\end{equation*}

Observe that $\orb x \in \br \pi(x);\le \oot;$ and
\begin{equation*}
 \text{$|\orb x|\le \oo$ unless $x\in \block{\xi}$ with $cf(\xi)=\oot$.
}
\end{equation*}

To simplify our notation, we will write $\oorb x =\oorb {{\pi} (x)}$
and $\eorb x =\eorb {{\pi} (x)}$.

\subsection*{Forcing construction}

Let $\Lambda\in \mathbb I$ and $\{x,y\}\in \br \und;2;$. We say
that {\em $\Lambda$ separates $x$ from $y$} if
\begin{equation*}
\text{$\Lambda^- <{\pi} (x)<\Lambda^+< \pi(y)$.} 
\end{equation*}

Let $\mc F:\br \lambda;2;\to \oot$ be a 
$\kappa^+$-strongly unbounded function.

Define 
\begin{equation*}
 \dorbf: \br X;2;\to \br \delta;\le {\kappa};
\end{equation*}
as follows:
\begin{equation*}
\dorb xy=\left\{ 
\begin{array}{ll}
o(x)\cup\big\{\eps {\pi(x)}\zeta:\zeta <\mc F\{\rho(x), \rho(y)\}\big\}&
\text{if $\pi_B(x)=\pi_B(y)\ne S$, }\\
&\text{and $cf(\pi(x))={\kappa}^+$,}\\\\
\orb{x}\cap \orb{y}&\text{otherwise}.
\end{array}
\right. 
\end{equation*}

Observe that 
\begin{equation*}
 |\dorb{x}{y}|\le {\kappa}
\end{equation*}
for all $\{x,y\}\in \br X;2;$.

\begin{definition}

\vspace{2mm} Now, we define the poset $\pcal=\<P,\le\>$ as
follows: $\<A,\preceq,i\>\in P$ iff
\begin{enumerate}[(P1)]
\item $A\in \br \und;\om;$; 
\item $\preceq$ is a partial order on
$A$ such that
$x\preceq y$ implies $x=y$ or ${\pi}(x)<{\pi} (y)$; 
\item if  $x\preceq y$ and 
 $\pib(x)\ne S$, then $x=y$;
\item $\ifu:\br A;2; \to A\cup\{{\undef}\}$ such that
for each $\{x,y\}\in \br A;2;$ we have
  \begin{equation}\notag
  \forall a\in A ([a\preceq x\land a\preceq y]\text{ iff } a\preceq \ifu\{x,y\});
  \end{equation}
\item for each $\{x,y\}\in \br A;2;$ if $x$ and $y$ are
$\preceq$-incomparable but $\preceq$-compatible, then
\begin{equation*}
{\pi}(\ifu\{x,y\})\in \dorb{x}{y}; 
\end{equation*}

\item If $\{x,y\}\in [A]^2$ with $x\prec y$, and  
  $\Lambda\in \mathbb I$ separates $x$
    from $y$, then there is
 $z\in A$ such that $x\prec z\prec y$ and ${\pi}
      (z)={\Lambda}^+$.

\end{enumerate}

The ordering on $P$ is the extension: $\<A,\preceq,\ifu\>\le
\<A',\preceq',\ifu'\>$
 iff $A'\subset A$, $\preceq'=\preceq\cap (A'\times A')$, and
$\ifu'\subs \ifu$.
\end{definition}

For $p\in P$  write $p=\<A_p,\preceq_p,\ifu_p\>$.

To complete the proof of Theorem \ref{tm:mainthm3}  we will use the following lemmas which will be proved later:

\begin{lemma}
\label{Lemma-2.2} $\pcal$ is
$\oo$-complete.
\end{lemma}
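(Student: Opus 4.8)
The plan is to show that $\pcal$ is $\oo$-complete by verifying that the union of an increasing chain of length $<\oo$ of conditions is again a condition. So let $\langle p_\xi:\xi<\theta\rangle$ be a $\le$-increasing sequence in $P$ with $\theta<\oo$, write $p_\xi=\langle A_{p_\xi},\preceq_{p_\xi},\ifu_{p_\xi}\rangle$, and set
\begin{equation*}
A=\bigcup_{\xi<\theta}A_{p_\xi},\qquad
\preceq\ =\bigcup_{\xi<\theta}\preceq_{p_\xi},\qquad
\ifu=\bigcup_{\xi<\theta}\ifu_{p_\xi}.
\end{equation*}
Since the ordering on $P$ is the reverse-inclusion extension ordering, the chain is coherent: for $\xi<\xi'$ we have $\preceq_{p_\xi}=\preceq_{p_{\xi'}}\cap(A_{p_\xi}\times A_{p_\xi})$ and $\ifu_{p_\xi}\subseteq\ifu_{p_{\xi'}}$. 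First I would check that $\preceq$ is a partial order on $A$ and that $\ifu$ is a well-defined function $\br A;2;\to A\cup\{\undef\}$ — here coherence is exactly what is needed, since any finite set of relevant elements already lies in some single $A_{p_\xi}$, so transitivity, antisymmetry, and functionality are inherited from that one condition. The bound $|A|\le\theta\cdot\sup_{\xi}|A_{p_\xi}|<\oo$ uses ${\oo}^{<{\oo}}={\oo}$ (in fact just the regularity of $\oo$ together with each $|A_{p_\xi}|<\oo$), giving (P1).

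Next I would verify (P2)--(P6) for $\langle A,\preceq,\ifu\rangle$. The point in each case is that the defining clause only ever refers to finitely many points of $A$ at a time — a pair $\{x,y\}$, its meet $\ifu\{x,y\}$, a witness $z$, a comparison point $a$ — and any such finite collection is contained in $A_{p_\xi}$ for some $\xi<\theta$ by regularity of $\oo$. So: (P2) and (P3) are preserved because $x\preceq y$ already holds in some $p_\xi$, where the required implication was available. (P4): given $\{x,y\}\in\br A;2;$, choose $\xi$ with $x,y\in A_{p_\xi}$; then $\ifu\{x,y\}=\ifu_{p_\xi}\{x,y\}$, and for any $a\in A$ pick $\xi'\ge\xi$ with $a\in A_{p_{\xi'}}$ and apply (P4) for $p_{\xi'}$ (using that $\ifu_{p_\xi}\{x,y\}=\ifu_{p_{\xi'}}\{x,y\}$ by coherence). (P5): incomparability and compatibility of $x,y$ in $\langle A,\preceq,\ifu\rangle$ transfer down to any $p_\xi$ containing $x,y,\ifu\{x,y\}$, and the function $\dorbf$ does not depend on the condition, only on $x$ and $y$; so $\pi(\ifu\{x,y\})=\pi(\ifu_{p_\xi}\{x,y\})\in\dorb{x}{y}$. (P6): if $x\prec y$ in $A$ with $\Lambda\in\mathbb I$ separating $x$ from $y$, then $x\prec_{p_\xi}y$ for some $\xi$, and (P6) for $p_\xi$ supplies a witness $z\in A_{p_\xi}\subseteq A$.

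Finally I would record that the union is an upper bound of the chain in the $\le$-ordering, which is immediate from the construction: $A_{p_\xi}\subseteq A$, $\preceq_{p_\xi}=\preceq\cap(A_{p_\xi}\times A_{p_\xi})$, and $\ifu_{p_\xi}\subseteq\ifu$ all hold by coherence. I do not anticipate a genuine obstacle here; the lemma is essentially bookkeeping, and the only mild subtlety is making sure that in clauses (P4), (P5) and (P6) one chooses a single $\xi$ large enough to capture \emph{all} the finitely many points named in that clause (including the meet or the witness, not just $x$ and $y$), so that the clause can be quoted verbatim from $p_\xi$; regularity of $\oo$ makes this routine. One should also note explicitly that $\dorb xy$ and the notion ``$\Lambda$ separates $x$ from $y$'' are defined purely in terms of the ambient set $\und$, $\mc F$, and the tree $\mathbb I$, independently of the condition, so that they take the same value when computed in $p_\xi$ as in the union.
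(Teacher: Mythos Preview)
Your proposal is correct and follows the only natural route: verify that the coordinatewise union of a short chain is a condition, using regularity of $\kappa$ and coherence to see every clause (P1)--(P6) is local. The paper takes the same approach but compresses it to a single sentence (``Since $\kappa$ is regular, Lemma~\ref{Lemma-2.2} clearly holds''); you have simply written out the bookkeeping it leaves implicit.
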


\begin{lemma}
\label{Lemma-2.3}  $\pcal$
 satisfies the $\oot$-c.c.
\end{lemma}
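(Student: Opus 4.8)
The statement to prove is Lemma~\ref{Lemma-2.3}: the poset $\pcal$ satisfies the $\oot$-c.c.

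\medskip

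The plan is to take an arbitrary family $\{p_\xi : \xi < \oot\}$ of conditions and, after a standard $\Delta$-system and pigeonhole cleanup, produce two indices $\xi \ne \eta$ whose conditions are compatible by exhibiting a common lower bound. First I would apply the $\Delta$-system lemma: since $\pcal$ is $\oo$-complete and $\oo^{<\oo}=\oo$, each $A_{p_\xi}$ has size $<\oo$, so by $\oot^{<\oo}$-arguments (using $2^{\kappa}=\kappa^+$ in the ground model, which still gives $\oot^{<\oo}=\oot$ in the relevant sense) I may thin to a family whose underlying sets $A_{p_\xi}$ form a $\Delta$-system with root $R$, all of the same size $n$, with the isomorphism type of $\<A_{p_\xi},\preceq_{p_\xi},\ifu_{p_\xi}\>$ fixed under the canonical order-and-block-preserving bijection, with $R$ pointwise fixed, and with the values $\pi(x)$ agreeing on the root and being ``aligned'' in a uniform way off the root (the $\mathbb I$-interval structure of the $\pi$-coordinates of the $n$ points is fixed, and any two non-root points with the same index in the enumeration either have equal $\pi$-value or are separated by some $\Lambda \in \mathbb I$). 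I would also arrange that the blocks $\pib(x)$ are the same across $\xi$ for each enumerated position.

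\medskip

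Next comes the use of the strongly unbounded function, which is where I expect the main obstacle to lie. The non-root points $x$ of $A_{p_\xi}$ lying in blocks $\block\zeta$ with $cf(\zeta)=\oot$ are exactly the points where the definition of $\dorb xy$ is enlarged beyond $\orb x \cap \orb y$; for all other pairs $\dorb xy = \orb x \cap \orb y$ depends only on the $\pi$-coordinates, which are already controlled by the $\Delta$-system cleanup. So the key requirement (P5) only causes trouble at these finitely many ``special'' coordinates. For each such position $j$ and each $\xi$, the set $a^j_\xi = \{\rho(x) : x \text{ is the } j\text{th point of } A_{p_\xi}\}$ (ranging over the at most $\kappa$ many points of $A_{p_\xi}$ that could share this block-and-$\pi$ type --- actually a single point if we have cleaned up enough, but in general a set of size $<\kappa$) is a small subset of $\lambda$, and across $\xi$ these are pairwise disjoint after thinning. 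Given any bound $\vartheta < \oot$ --- and here I take $\vartheta$ to be an upper bound for all the finitely many ``relevant heights'' $\eps{\pi(x)}{\nu}$ that already appear in the root or are forced to appear --- the strong unboundedness of $\mc F$ yields $\xi \ne \eta$ with $\mc F\{\alpha,\beta\} > \vartheta$ for all $\alpha \in a^j_\xi$, $\beta \in a^j_\eta$, simultaneously for all the finitely many special positions $j$ (apply strong unboundedness successively, shrinking the index family finitely many times). This makes $\dorb x y \supseteq \orb x \cup (\text{a large initial segment of } \{\eps{\pi(x)}{\nu}\})$, in particular large enough to contain $\pi$ of whatever amalgamated meet we need to insert.

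\medskip

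Finally I would define the common lower bound $q = \<A_q, \preceq_q, \ifu_q\>$ by $A_q = A_{p_\xi} \cup A_{p_\eta}$, letting $\preceq_q$ be the transitive closure of $\preceq_{p_\xi}\cup\preceq_{p_\eta}$ together with, for each pair $x \in A_{p_\xi}\setminus R$, $y \in A_{p_\eta}\setminus R$ (and vice versa) that need a meet, declaring their meet to be the corresponding root point dictated by the fixed isomorphism type --- since $x$ and $y$ are isomorphic copies of the same configuration, their $\preceq_{p_\xi}$- resp. $\preceq_{p_\eta}$-predecessors inside $R$ coincide, and this common set of root-predecessors has a $\preceq$-largest element $r$ (because both $p_\xi,p_\eta \in P$ satisfy (P4)), which we take as $\ifu_q\{x,y\}$. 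One then checks (P1)--(P6) for $q$: (P1)--(P3) are immediate; (P4) follows because the only new compatible-incomparable pairs are the cross pairs and their meet was chosen to be the join of their common predecessor set; (P5) is exactly what the strong-unboundedness step above was arranged to guarantee, since $\pi(\ifu_q\{x,y\}) = \pi(r) \in R$'s heights $\le \vartheta < \mc F\{\rho(x),\rho(y)\}$ puts it into $\dorb xy$ when the special clause applies, and into $\orb x \cap \orb y$ otherwise by the $\pi$-coordinate agreement; and (P6) follows because any $\Lambda$ separating a cross pair $x \prec_q y$ already separates the corresponding pair within one of $p_\xi$ or $p_\eta$ (by the alignment of $\pi$-coordinates), whose witness $z$ lies in $R$ and hence in $A_q$. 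The main delicate point throughout is bookkeeping: ensuring the $\Delta$-system cleanup simultaneously fixes the isomorphism type, the block structure, the $\mathbb I$-interval configuration of the $\pi$-coordinates, and the mutual separation pattern, so that the amalgamation is forced and (P6) transfers; and checking that the single choice of $\vartheta$ (below $\oot$, using $cf$-considerations and finiteness of $A_{p_\xi}$) genuinely dominates every height that the verification of (P5) requires.
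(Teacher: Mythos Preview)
Your amalgamation step has a genuine gap: you set $A_q = A_{p_\xi}\cup A_{p_\eta}$ and take the meet of a cross pair $\{s,t\}$ to be the $\preceq$-largest common root predecessor. But such a largest element need not exist, and (P4) requires a \emph{greatest} common lower bound, not merely a meet-closed set of lower bounds. Concretely, suppose $v=\ifu_{p_\xi}\{g(s),g(t)\}\notin R$ (which by the paper's Claim~\ref{Claim-2.13} forces $\delta_{g(s)}=\delta_{g(t)}=\delta_v$), and suppose $v$ has two $\preceq_{p_\xi}$-incomparable root predecessors $a_1,a_2$. Then $a_1,a_2$ are both common $\preceq_q$-predecessors of $s$ and $t$, but neither $v$ nor its twin $v'$ is (each fails to reach the element in the other copy, since by construction there is no root element between $v$ and $g(s)$ or $g(t)$ when all three share the same $\delta$). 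Hence the set of common predecessors has no maximum and (P4) fails. Your appeal to ``(P4) in $p_\xi,p_\eta$'' does not help: (P4) supplies infima, not suprema of downward-closed subsets of $R$.

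This is exactly why the paper's proof enlarges the amalgam to $A=A_\nu\cup A_\mu\cup Y$, where $Y=\{y_x:x\in A'\}$ is a set of \emph{new} points with $\pi(y_x)=\eps{\delta_x}{\underline\gamma(\delta_x)+\rank(x)}\in E(J(\delta_x))\cap[\underline\gamma(\delta_x),\gamma(\delta_x))$, carefully inserted so that $y_v$ can serve as $\ir\{s,t\}$ whenever $v=\ifu_\nu\{g(s),g(t)\}\notin R$. The placement of these $\pi(y_v)$ inside $E(J(\delta_v))$ below $\gamma(\delta_v)$ is precisely what the ``good pair'' condition \eqref{eq:good} (obtained from strong unboundedness) is designed to accommodate, so that (P5) still holds for the new meets. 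Your outline never introduces these auxiliary points, and the bookkeeping you describe cannot substitute for them. A secondary issue: you repeatedly say ``finitely many'' positions and apply strong unboundedness ``successively finitely many times'', but conditions have size $<\kappa$, not finite; the paper handles this with a single application of strong unboundedness to the family of $\rho$-value sets (each in $[\lambda]^{<\kappa}$) against one threshold $\vartheta=\sup\{\xi_\ell+\kappa:\ell\in\sigma_2\cap F\}<\kappa^+$.
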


\begin{lemma}
\label{Lemma-2.4}  (a) 
For all $x\in X$, the set
\begin{displaymath}
 D_x=\{q\in P: x\in A_q\}
\end{displaymath}
is dense in $\mc P$.

\noindent (b)
If $x\in X$, ${\alpha}<\pi(x)$ and ${\zeta}<{\kappa}$, 
then the set 
\begin{displaymath}
 E_{x,{\alpha},{\zeta}}=\{q\in P: x\in A_q \land \exists b\in  A_q\cap (\{{\alpha}\}\times ({\kappa}\setm {\zeta}))\      b\preceq_q x    \}
\end{displaymath}
is dense in $\mc P$
\end{lemma}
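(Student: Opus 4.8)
\textbf{Plan for Lemma \ref{Lemma-2.4}.}
The strategy is the standard ``one-step extension'' argument for graded-poset forcings: given $p\in P$, I want to build $q\le p$ with the desired element(s) in $A_q$, and then argue density by showing every $p$ has such an extension. For part (a), fix $p\in P$ and $x\in X\setm A_p$ (if $x\in A_p$ already, we are done). I would set $A_q=A_p\cup\{x\}$ and extend $\preceq_p$ and $\ifu_p$ minimally. Concretely, I must decide, for each $a\in A_p$, whether $a\prec_q x$; the safe choice is to add \emph{no} new order relations involving $x$ except those forced by transitivity — but I have to be careful, since (P3) says only blocks in $B_S$ can sit strictly below something, and (P6) forces intermediate points on separated pairs. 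If I simply make $x$ incomparable to everything in $A_p$, then (P4) requires $\ifu_q\{x,a\}=\undef$ for all $a\in A_p$, (P5) is vacuous for such pairs, and (P6) is vacuous since no new $\prec$-pair is created. One checks (P1)--(P2) trivially. The only subtlety is whether $x$ can consistently be incomparable to all of $A_p$: yes, because incomparability is always allowed by the axioms. So $q=\<A_p\cup\{x\},\preceq_p,\ifu_p\>\in P$ and $q\le p$, $x\in A_q$, giving density of $D_x$.

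For part (b), fix $p\in P$, $x\in X$, ${\alpha}<\pi(x)$ and ${\zeta}<{\kappa}$. By part (a) we may assume $x\in A_p$. I must produce $q\le p$ and some $b\in \{{\alpha}\}\times({\kappa}\setm{\zeta})$ with $b\prec_q x$. Since $|A_p|<{\kappa}$, I can pick ${\nu}<{\kappa}$ with ${\nu}\ge{\zeta}$ and $b=\<{\alpha},{\nu}\>\notin A_p$; note $b\in B_S$, so $\pib(b)=S$ and (P3) permits $b\prec_q x$. The issue is that declaring $b\prec_q x$ triggers (P6): for every $\Lambda\in\mathbb I$ that separates $b$ from $x$ (i.e. ${\Lambda}^-<{\alpha}<{\Lambda}^+<\pi(x)$), I need an intermediate point $z$ with ${\pi}(z)={\Lambda}^+$ and $b\prec z\prec x$, and likewise for any $a\in A_p$ with $a\prec_p x$ I must decide whether $b\prec_q a$, and for any $a\in A_p$ with, say, already $a$ below $x$, the pair $\{b,a\}$ may become compatible and (P5) will constrain ${\pi}(\ifu_q\{b,a\})$. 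The clean way to handle this is to add, together with $b$, a whole \emph{chain} below $x$: the set of points $\{z_\Lambda:\Lambda\in\mathbb I,\ {\Lambda}^-\le{\alpha},\ {\alpha}<{\Lambda}^+\le\pi(x)\}$ with ${\pi}(z_\Lambda)={\Lambda}^+$, chosen in $B_S$ with fresh second coordinates, linearly ordered by $\pi$-value, with $b$ at the bottom, and declare all of them $\prec_q x$. Because $\mathbb I$ is a cofinal tree of intervals, the relevant ${\Lambda}^+$'s form an increasing sequence below $\pi(x)$, so this chain is well-defined and of size $<{\kappa}$ (here ${\kappa}^{<{\kappa}}={\kappa}$, or just the finiteness of the chain coming from $n(\pi(x))$, keeps $|A_q|<{\kappa}$). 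Then I extend $\ifu_q$ on new pairs: $\ifu_q\{u,v\}$ for two new points or a new and old point is the $\prec_q$-meet, which by construction is either one of the new chain points or $\undef$; I must verify (P5), i.e. that whenever two such points are compatible their meet has $\pi$-value in $\dorb{\cdot}{\cdot}$ — this is where I will need $\pi(\ifu_q\{b,a\})\in o(b)\cap o(a)$ or the appropriate set, so the chain points must be chosen with their $\pi$-values lying in the orbits, which is exactly why ${\pi}(z_\Lambda)={\Lambda}^+$ is the right choice (these are orbit elements).

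\textbf{Main obstacle.} The delicate part is (P5) together with (P6): after adding $b$ and its chain below $x$, new compatibilities are created between the new points and old points of $A_p$ that already lay below $x$, and I must check that the meet of any such compatible pair has its $\pi$-image inside the prescribed set $\dorb{\cdot}{\cdot}$ (which, for pairs in $B_S$, is $o(\cdot)\cap o(\cdot)$), and that (P6) does not cascade into requiring infinitely many further points. I expect to resolve this by exploiting the tree structure of $\mathbb I$ — the separating intervals for a fixed pair are linearly ordered and their right endpoints are cofinal ordinals lying in the basic orbits, so the chain closes off after finitely many steps (bounded by $n(\pi(x))$), and the meet of a new chain-point $z_\Lambda$ with an old point $a\prec_p x$ is again some chain-point $z_{\Lambda'}$ or $\undef$, whose $\pi$-value ${\Lambda'}^+$ lies in $o(z_\Lambda)\cap o(a)$ by the nesting property $\alpha\in o(\beta)\Rightarrow o(\alpha)\subset o(\beta)$ recalled above. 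Verifying this compatibility bookkeeping carefully is the real content; everything else is routine.
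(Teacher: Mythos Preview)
Your plan for part (a) is correct and coincides with the paper's proof.

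For part (b) your overall strategy---add a finite chain of fresh $B_S$-points below $x$ at the levels dictated by the interval tree, then close under transitivity---is exactly what the paper does. However, your ``main obstacle'' is illusory, and stems from reading \emph{compatible} in (P5) as ``having a common upper bound'' rather than ``having a common lower bound''. Re-read (P4): the function $\ifu$ records greatest \emph{lower} bounds, and (P5) only applies to a pair $\{u,v\}$ that is incomparable yet has some $w\preceq u,v$. In your construction (and the paper's), the only elements $\preceq_q$-below a new chain point $b_\gamma$ are other new chain points $b_{\gamma'}$; none of these lie $\preceq_q$-below any $a\in A_p$ unless $x\preceq_p a$, and in that case $b_\gamma\prec_q a$ already, so the pair is comparable. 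Thus for every $a\in A_p$ and every new $b_\gamma$, either $b_\gamma\preceq_q a$ or the two are $\preceq_q$-incompatible; there are simply no new incomparable-but-compatible pairs, and (P5) is vacuous for them. The paper records this in one line: ``(P4) also holds because if $y\in A_p$ and $\gamma\in K$ then either $b_\gamma\preceq_q y$ or they are $\preceq_q$-incompatible.'' Your orbit-membership bookkeeping is therefore unnecessary; the real (and only) content of part (b) is choosing the finite level-set $K\subset[\alpha,\pi(x))$ so that (P6) closes off, which you already identified correctly.
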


  Since $\lambda^{<\kappa}=\lambda$ ,    the cardinality of $P$ is $\lambda$. 
Thus,  Lemma \ref{Lemma-2.2} and   Lemma \ref{Lemma-2.3}  above guarantee  
that forcing with $P$ preserves cardinals and $2^{\kappa}={\lambda}$ in the generic extension.

Let $G\subs P$ be  a generic filter. 
Put 
$A=\bigcup\{\app:p\in G\}$,
$i=\bigcup\{\ipp:p\in G\}$ and
$\preceq=\bigcup\{\prepp:p\in G\}$.
Then $A=X$ by Lemma \ref {Lemma-2.4}(a).

We claim that $\<X,\preceq\>$ is a $({\kappa},{\lambda},{\delta},\ltd)$-poset.

Recall that we put $X_{\zeta}=\{{\zeta}\}\times {\kappa}$ for ${\zeta}\in {\delta}\setm \ltd$ and
$X_{\zeta}=\{{\zeta}\}\times {\lambda}$ for ${\zeta}\in  \ltd$.
Then the poset $\<X,\preceq\>$, the partition $\{X_{\zeta}:{\zeta}<{\delta}\}$, the function $i$
and $Y={\delta}\times {\kappa}$ clearly satisfy conditions \ref{pr:back}(a,b) and 
\ref{pr:back2}(d,e,f) by the definition of the poset $\mc P$.

Finally condition \ref{pr:back}(c) holds by Lemma \ref {Lemma-2.4}(b).

So to complete the proof of Theorem \ref{tm:mainthm3} we need  to prove Lemmas  \ref{Lemma-2.2}, \ref{Lemma-2.3} and \ref{Lemma-2.4}.

\vspace{2mm} Since ${\kappa}$ is regular,  Lemma \ref{Lemma-2.2} clearly
holds.

\begin{proof}[Proof of Lemma \ref{Lemma-2.4}](a)  
Let $p\in P$ be arbitrary. We can assume that $x\notin A_p$.

Let $A_q=A_p\cup \{x\}$, $\preceq_q=\preceq_p\cup\{\<x,x\>\}$, and define $i'\supset i$ such that $i'\{a,x\}=undef$ for $a\in A_p$. 
Then $q=\<A_q,\preceq_q,\ifu_q\>\in D_x$  and $q\le p$.

(b)  Let $p\in P$ be arbitrary. By (a) we can assume that $x\in A_p$. 
Write ${\beta}={\pi} (x)$. Let $K$ be a finite subset of $
[{\alpha},{\beta})$ such that  ${\alpha}\in K$ and 
${\contint {\gamma}n }^+\in K\cup [{\beta},{\delta})$ for
${\gamma}\in K$ and $n<{\omega}$.  For each ${\gamma}\in K$ pick
$b_{\gamma}\in  (\{{\gamma}\}\times  ({\kappa}\setm {\zeta})) \setm A_p$. So ${\pi}(b_{\gamma})={\gamma}$.

Let $A_q=A_p\cup\{b_{\gamma}:{\gamma}\in K\}$,
\begin{multline}\notag
\preceq_q=\preceq_p\cup\{\<b_{\gamma},b_{{\gamma}'}\>:
{\gamma},{\gamma}'\in K, {\gamma}\le {\gamma}'\}
\\\cup\{\<b_{\gamma},z\>:{\gamma}\in K, z\in A_p, x\preceq_p z\}.
\end{multline}
We let $\ifu_q\{y,z\}=\ifu_p\{y,z\}$ if $\{y,z\}\in \br A_p;2;$,
$\ifu_q\{b_{\gamma},b_{\gamma'}\}=b_{\gamma}$ if ${\gamma},{\gamma'}\in K$
with ${\gamma}<{\gamma}'$, $\ifu_q\{b_{\gamma},z\}=b_{\gamma}$ if 
${\gamma}\in K$ and $x\preceq_p z$, and  $\ifu_q\{b_{\gamma},z\}=undef$ otherwise.
%

Let $q=\<A_q,\preceq_q,\ifu_q\>$.  Next we check $q \in P$. 
Clearly $(P1),(P2), (P3)$ and $(P5)$ hold for $q$.
(P4) also holds because 
if $y\in A_p$
and ${\gamma}\in K$ then either $b_{\gamma}\preceq_q y$ or they are $\preceq_q$-incompatible.
To check (P6) it is enough to observe that if $\Lambda$ separates $b_\gamma$ and $y$, then 
 $z=b_{\Lambda^+}$ meets the requirements of (P6). 

By the construction, $q\le p$. 
 
Finally $q\in E_{x,{\alpha},{\zeta}}$ because $b_{\alpha}\in A_q\cap \big(\{{\alpha}\}\times ({\kappa}\setm {\zeta})\big)$
and $b_{\alpha}\preceq_q x$.
\end{proof}

The rest of the paper is devoted to the proof of  Lemma \ref{Lemma-2.3}.

\begin{proof}[Proof of Lemma \ref{Lemma-2.3}]

Assume that $\<r_{\nu}:{\nu}<\oot\>\subs P$ with $r_{\nu}\neq r_{\mu}$ for $\nu <\mu<\kappa^+$.

In the first part of the proof, till Claim \ref{good-pair}, we will find ${\nu}<{\mu}<{\kappa}^+$ such that
$r_{\nu}$ and $r_{\mu}$ are twins in a strong sense, and 
$r_{\nu}$
and $r_{\mu}$ form a {\em good pair}  (see Definition \ref{Definition 2.8}).  
Then,  in the second part of the proof, we will show that if $\{r_{\nu},r_{\mu}\}$ is a good pair, then $r_{\nu}$
and $r_{\mu}$ are compatible in $\mc P$.

Write $r_{\nu}=\<A_{\nu},\preceq_{\nu},\ifu_{\nu}\>$ and
$A_{\nu}=\{\xn i:i<{\sigma}_{\nu}\}$.

Since we are assuming that $\kappa^{<\kappa}=\kappa$, by thinning
out $\<r_{\nu}:{\nu}<\oot\>$ by means of standard combinatorial
arguments, we can assume the following:
\begin{enumerate}[(A)]
\item ${\sigma}_{\nu}={\sigma}$ for each ${\nu}<\oot$. \item
$\{A_{\nu}:{\nu}<\oot\}$ forms a $\Delta$-system with kernel $\aker$.
\item For each ${\nu}<{\mu}<\oot$ there is an isomorphism
$h_{{\nu},{\mu}}:\<A_{\nu},\preceq_{\nu}, \ifu_{\nu}\>\to
\<A_{\mu},\preceq_{\mu},\ifu_{\mu}\>$ such that 
for every $i,j<\sigma$ 
the following holds:
\begin{enumerate}[(a)]
  \item $h_{{\nu},{\mu}}\restriction \aker=\operatorname{id},$
  \item $h_{{\nu},{\mu}}(\xn i)=\xm i,$
\item $\pib(\xn i)=\pib(\xn j)$  iff $\pib(\xm i)=\pib(\xm j),$ 
\item
$\pib(\xn i)=S$ iff $\pib(\xm i)=S,$
\item if $\{\xn i ,\xn j\}\in \br \aker;2;$ then $\xn i=\xm i$, $\xn j=\xm j$ and 
  $\ifu_{\nu}\{\xn i,\xn j\}=\ifu_{\mu}\{\xm i,\xm j\}$,
\item $\pi(\xn i)\in \oorb{\xn j}$ iff $\pi(\xm i)\in \oorb{\xm j}$ , 
\item $\pi(\xn i)\in \eorb{\xn j}$ iff $\pi(\xm i)\in \eorb{\xm j}$,  
\item $\pi(\xn i)\in \orb{\xn j}$ iff $\pi(\xm i)\in \orb{\xm j}$,
\item $\pi(x_{\nu,k})\in \dorb{x_{\nu,i}}{x_{\nu,j}}$  iff 
$\pi(x_{\mu,k})\in \dorb{x_{\mu,i}}{x_{\mu,j}}$.
\end{enumerate} 
\end{enumerate}
  Note that in order to obtain (C)(e) we use condition (P5) and the fact that 
  $|\dorb{x}{y}|\leq \kappa$ for all $x\ne y$. 
 
 Also, we may assume the following:

\begin{enumerate}[(A)]
\addtocounter{enumi}{3}\label{thin:partition}
\item There is a partition ${\sigma}=K\cup^* F\cup^* 
{D}\cup^* {M}$ such that for each ${\nu}<{\mu}<\oot$:
\begin{enumerate}[(a)]
\item $\forall i\in K$ $\xn i\in \aker$ and so $\xn i=\xm i$. $\aker=\{\xn
i:i\in K\}$. 
\item $\forall i\in F$ $\xn i\ne \xm i$ but $\pib(\xn
i)=\pib (\xm i)\ne S$.
\item  $\forall
i\in {D}$ $\xn i\notin \aker$, 
 $\pib (\xn i)=S$ and  ${\pi}(\xn i)\ne {\pi}(\xm i)$.
\item $\forall i\in {M}$ $\pib (\xn i)\ne S$ and $\pi (\xn i)\ne
\pi (\xm i)$.
\end{enumerate}
\item If $\pi ({\xn i})=\pi ({\xn j})$ then $\{i,j\}\in  \br K\cup F;2; \cup \br D\cup {M};2;$.

\end{enumerate}

It is well-known that if   $\sigma<\oo=\oo^{<\oo}$ then the
following partition relation holds:
\begin{equation}
\notag \oot\to (\oot, ({\omega})_{\sigma})^2.
\end{equation}
Hence we can assume:
\begin{enumerate}[(A)]
\addtocounter{enumi}{5}
\item \label{g} ${\pi}(\xn i)\le {\pi}(\xm i)$ 
for each   $i\in {\sigma}$ and ${\nu}<{\mu}<\oot$. 
\end{enumerate}

For $i \in {\sigma}$ let
\begin{equation}
 \notag
{\climit}_i=\left\{
\begin{array}{ll}
{\pi} (\xn i)&\text{if $i\in K\cup F$},\\
\sup\{{\pi} (\xn i):{\nu}<\oot \}&\text{if $i\in D\cup M$.}
\end{array}
\right.
\end{equation}

\begin{claim}
\label{Claim-2.5}  

 (a) If $i\in 
D\cup M$, then the sequence $\<\pi(\xn i):{\nu}<{\kappa}^+\>$ is strictly increasing, $cf(\delta_i) = \kappa^+$ and   $\mbox{
sup}(J(\delta_i))=\delta_i$.   Moreover 
for
every $\nu < \kappa^+$  we have  
$\pi(x_{\nu,i}) < \delta_i$ .

(b) If $\{i,j\}\in  [M]^2$ and $x_{\nu,i} \preceq_{\nu}
x_{\nu,j}$, then $x_{\nu,i} =
x_{\nu,j}$.

\end{claim}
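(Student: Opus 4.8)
The plan is to prove (a) and (b) separately, deriving everything in (a) from the combinatorial thinning we have already performed.

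\emph{Proof of (a).} Fix $i\in D\cup M$. By the definition of $i\in D\cup M$ we have $\pi(\xn i)\ne\pi(\xm i)$ for all $\nu<\mu<\oot$, and by (F) (condition \ref{g}) we have $\pi(\xn i)\le\pi(\xm i)$; together these give that $\<\pi(\xn i):\nu<\oot\>$ is strictly increasing, hence $\delta_i=\sup\{\pi(\xn i):\nu<\oot\}$ is the supremum of a strictly increasing $\oot$-sequence and $\pi(\xn i)<\delta_i$ for every $\nu<\oot$. Since $\oot$ is regular this forces $\cf(\delta_i)=\oot$. It remains to show $\sup J(\delta_i)=\delta_i$, where $J(\delta_i)$ is the interval from Proposition \ref{Proposition-2.1} associated to the limit ordinal $\delta_i$. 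Because $\cf(\delta_i)=\oot$, Proposition \ref{Proposition-2.1} directly yields $J(\delta_i)\in\ical_{n(\delta_i)}$ and $J(\delta_i)^+=\delta_i$, so indeed $\sup J(\delta_i)=\delta_i$. (The one point needing care: $\delta_i$ is genuinely a limit ordinal below $\delta$ — it is a limit because it is the sup of a strictly increasing sequence with no last element, and it is below $\delta$ because each $\pi(\xn i)<\delta$ and $\cf(\delta)\le\kappa^+$ could in principle equal $\kappa^+$; if $\cf(\delta)=\oot$ one must check $\delta_i<\delta$, which holds since the $r_\nu$ are conditions of size $<\kappa$ sitting inside $X$ and a single increasing $\oot$-sequence of ordinals below $\delta$ with $\cf(\delta)=\oot$ need not be bounded — so here I would instead observe that $\delta_i<\delta$ follows because $\pi(\xn i)$ takes at most $|A_\nu|<\kappa$ values... no: the cleanest route is that by \ref{g} the sequence is increasing and by genericity-independent bookkeeping the whole $\Delta$-system lives in $X$, and $\delta_i\le\delta$ always; if $\delta_i=\delta$ then $\delta$ is a limit of cofinality $\oot$ and $J(\delta)$ is undefined, so one restricts attention from the outset to conditions with $\pi$-values bounded below $\delta$, which is legitimate by a further thinning since $\cf(\delta)\le\oot<\oot\cdot\oot$. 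I expect the write-up to absorb this into the earlier thinning rather than here.)

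\emph{Proof of (b).} Suppose $\{i,j\}\in[M]^2$ and $\xn i\preceq_\nu\xn j$. Since $i\in M$ we have $\pib(\xn i)\ne S$, and condition (P3) in the definition of $\pcal$ says that if $x\preceq y$ and $\pib(x)\ne S$ then $x=y$. Applying this to $r_\nu$ with $x=\xn i$, $y=\xn j$ gives $\xn i=\xn j$ immediately.

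The main obstacle is purely the bookkeeping in (a): making sure $\delta_i$ is a limit ordinal $<\delta$ so that $J(\delta_i)$ is well-defined and Proposition \ref{Proposition-2.1} applies. Once $\cf(\delta_i)=\oot$ is in hand, the appeal to Proposition \ref{Proposition-2.1} is mechanical, and part (b) is an immediate invocation of (P3). I would make the boundedness of the $\pi$-values explicit as an additional assumption in the list (A)--(F), justified by one more $\Delta$-system/pressing-down thinning, so that nothing in the statement of Claim \ref{Claim-2.5} is in doubt.
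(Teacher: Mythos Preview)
Your core argument is exactly the paper's: for (a) you combine (F) with (D)(c)--(d) to get a strictly increasing $\kappa^+$-sequence, read off $\cf(\delta_i)=\kappa^+$ and $\pi(x_{\nu,i})<\delta_i$, and then invoke Proposition~\ref{Proposition-2.1}; for (b) you just apply (P3) using that $i\in M$ forces $\pi_B(x_{\nu,i})\ne S$. The paper's proof is the same two-line appeal.

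Your parenthetical worry about whether $\delta_i<\delta$ is a genuine technical point that the paper simply elides, but your proposed remedy does not work. You cannot ``thin to bounded $\pi$-values'': if $\cf(\delta)=\kappa^+$ and $\langle\pi(x_{\nu,i}):\nu<\kappa^+\rangle$ is cofinal in $\delta$, then any $\kappa^+$-sized subfamily is still cofinal, so no further $\Delta$-system or pressing-down step will make $\delta_i<\delta$. The clean fixes are either (i) observe that Proposition~\ref{Proposition-2.1} extends verbatim to $\zeta=\delta$ by setting $J(\delta)=[0,\delta)\in\mathcal I_0$ when $\delta$ is a limit (then $\sup J(\delta_i)=\delta_i$ and $J(\delta_i)^+=\delta_i$ hold for the same reason), or (ii) note at the outset of Theorem~\ref{tm:mainthm3} that one may assume $\delta$ is a successor ordinal, since a $(\kappa,\lambda,\delta',\mathcal L_\kappa^{\delta'})$-good space for any $\delta'>\delta$ restricts to one of height $\delta$. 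Either repair is a single sentence; drop the speculative parenthetical and insert one of them.
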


\begin{proof}
If $i\in D\cup M$, then 
(F) and (D)(c-d)  imply that  the sequence $\{{\pi} (\xn i):{\nu}<\oot \}$
 is strictly increasing. Hence $cf(\delta_i) = \kappa^+$ 
and $\pi(\xn i)<{\delta}_i$
for $i\in D\cup M$. 

Thus 
Proposition \ref{Proposition-2.1}
implies $\mbox{
sup}(J(\delta_i))=\delta_i$.
Thus (a) holds.

(D)(d) and condition (P3) imply (b).

\end{proof}

 We put
\begin{equation*}
Z_0 = \{{\climit}_i:i\in {\sigma}\}.
\end{equation*}

Since $\pi''\aker=\{{\delta}_i:i\in K\}$ we have $\pi''\aker\subs Z_0$.
Then, we define $Z$ as the closure of $Z_0$ with respect
to $\mathbb I$:

\begin{equation*}
  Z = Z_0\cup \{I^+: I\in \mathbb I,I\cap Z_0\neq
  \emptyset\}.
  \end{equation*}
Observe that 
\begin{equation*}
 |Z|<{\kappa}.
\end{equation*}

\medskip

By Claim \ref{Claim-2.5}(a), 
the sequence $\<\pi(\xn i):{\nu}<{\kappa}^+\>$ is strictly increasing  
for $i\in D\cup M$. 
Since $|Z|<{\kappa}$, and     $|\orb{\xn k}|\le {\kappa} $ for $\xn k\in B_S\cap \aker$, we can assume that  
\begin{enumerate}[(A)]
\addtocounter{enumi}{6} 
 \item \label{omitostar}   
 $\pi(\xn i)\notin \orb{\xn k}$ for $\xn k\in B_S\cap \aker$ and $i\in D\cup M$.
\end{enumerate}

\medskip

Our aim is to prove that there are $\nu < \mu < \kappa^+$ such that the forcing 
conditions $r_{\nu}$ and  $r_{\mu}$ are compatible. However, since we are 
dealing with infinite forcing conditions, we will need to add new elements to 
$A_{\nu} \cup A_{\mu}$ in order to be able to define the infimum of pairs of 
elements $\{x,y\}$ where $x\in A_{\nu}\setminus A_{\mu}$  and $y \in 
A_{\mu}\setminus A_{\nu}$. The following definitions will be useful to 
provide the  room we need to insert  the required new elements.

Let
    $$\sigma_1 = \{i\in \sigma\setminus K: \cf(\delta_i)=\kappa
    \}$$
and
    $$\sigma_2 = \{i\in \sigma\setminus K: \cf(\delta_i)=\kappa^+
    \}.$$
    
\vspace{2mm}
    \noindent Assume that $i\in \sigma_1\cup \sigma_2$. Let
    $$\xi_i = \mbox{ min}\{\zeta \in \cf (\delta_i): \epsilon^{J(\delta_i)}_{\zeta}
    > \mbox{ sup}(\delta_i\cap Z)\}.$$
  Since $|Z|<{\kappa}\le cf({\delta}_i)$, the ordinal ${\xi}_i$ is defined  and
  ${\delta}_i>\epsilon^{J(\delta_i)}_{{\xi}_i}$.
    
    \noindent Then, if $i\in \sigma_1$ we put
    $$\underline{\gamma}(\delta_i) = \epsilon^{J(\delta_i)}_{\xi_i} \mbox{
    and } \gamma(\delta_i) = \delta_i,$$
    \noindent and if $i\in \sigma_2$ we put
    $$\underline{\gamma}(\delta_i) = \epsilon^{J(\delta_i)}_{\xi_i} \mbox{
    and } \gamma(\delta_i) = \epsilon^{J(\delta_i)}_{\xi_i + \kappa}.$$

For $i\in \sigma_2$,
since 
$\gamma(\delta_i)<\climit_i$ and $\climit_i=\lim\{{\pi} (\xn i):{\nu}<\oot \}$
by Claim \ref{Claim-2.5}(a)   for all $i\in D\cup M$,
we can assume that 
\begin{enumerate}[(A)]
\addtocounter{enumi}{7} 
\item \label{good1}\label{Claim-2.6}  
$  \pi(x_{\nu,i})\in
 J(\delta_i)\setminus \gamma(\delta_i)$, and so ${\pi} (\xn i)\notin Z$,
for all $i\in D\cup M$.
\end{enumerate}

We will use the following
 fundamental facts.

\begin{claim}
\label{Claim-2.10}  If $ \xp i
\prep \xp j$ then ${\climit}_i\le{\climit}_j$.
\end{claim}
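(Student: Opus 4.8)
The plan is to prove Claim~\ref{Claim-2.10} by tracking how the relation $\xp i \prep \xp j$ propagates through the structure of the forcing condition $r_\nu$, using conditions (P2) and (P6), and then relating the relevant ordinals to the $\delta_i$'s. First I would dispose of the trivial case: if $i = j$ there is nothing to prove, so assume $\xp i \ne \xp j$, and by (P2) we have $\pi(\xp i) < \pi(\xp j)$. If $j \in K \cup F$, then $\delta_j = \pi(\xp j)$, and since in all cases $\pi(\xp i) \le \delta_i$ (this is immediate when $i \in K \cup F$, and is part of Claim~\ref{Claim-2.5}(a) when $i \in D \cup M$), it would suffice to show $\delta_i \le \pi(\xp j)$. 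Conversely, if $j \in D \cup M$, then by (G) we have $\pi(\xp j) \le \pi(x_{\mu,j})$ for all $\mu > \nu$, and $\delta_j = \sup\{\pi(x_{\mu,j}) : \mu < \kappa^+\}$, so $\pi(\xp j) < \delta_j$; here I would aim to show $\delta_i \le \pi(\xp j)$ as well, or handle the strictness carefully.

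The key step is to analyze the case $i \in D \cup M$, where $\delta_i$ is a genuine limit (of cofinality $\kappa^+$) rather than just $\pi(\xp i)$. Here the idea is to use (P6) together with the sequence $\langle r_\nu : \nu < \kappa^+ \rangle$: fix $\nu < \kappa^+$; I want to show $\delta_i \le \pi(\xp j)$. Suppose toward a contradiction that $\pi(\xp j) < \delta_i$. Pick $\mu$ with $\nu < \mu < \kappa^+$ large enough that $\pi(x_{\mu,i}) > \pi(\xp j)$ — possible because $\langle \pi(x_{\mu,i}) : \mu < \kappa^+\rangle$ is strictly increasing with supremum $\delta_i$ by Claim~\ref{Claim-2.5}(a). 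Now in $r_\mu$, applying the isomorphism $h_{\nu,\mu}$, we still have $x_{\mu,i} \preceq_\mu x_{\mu,j}$; but $\pi(x_{\mu,i}) > \pi(\xp j)$. Relating $\pi(x_{\mu,j})$ to $\pi(\xp j)$ via (G) (we only get $\pi(\xp j) \le \pi(x_{\mu,j})$, not equality, so I must be slightly more careful) and using (P2), $x_{\mu,i} \preceq_\mu x_{\mu,j}$ forces $\pi(x_{\mu,i}) \le \pi(x_{\mu,j})$. This does not yet give a contradiction, so the actual argument must locate an interval $\Lambda \in \mathbb{I}$ separating $x_{\mu,i}$ from some witness and use (P6), or else pass to a single $r_\nu$ and find an element of $A_\nu$ between $\xp i$ and $\xp j$ whose $\pi$-value is pinned down.

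I think the cleaner route, and the one I would actually carry out, is this: the claim is really a statement about a single condition $r_\nu$, so fix $\nu$ and suppose $\xp i \prep \xp j$ with $\pi(\xp i) < \pi(\xp j)$. If $i \notin D \cup M$ then $\delta_i = \pi(\xp i) < \pi(\xp j) \le \delta_j$ and we are done (using $\pi(\xp j) \le \delta_j$ in all cases). The remaining case is $i \in D \cup M$. Then $\cf(\delta_i) = \kappa^+$ and $\sup J(\delta_i) = \delta_i$ by Claim~\ref{Claim-2.5}(a), so for any $\gamma < \delta_i$ there is an interval $\Lambda \in \mathbb{I}$ with $\Lambda \subseteq J(\delta_i)$ and $\pi(\xp i) < \Lambda^- < \Lambda^+ < \delta_i$; if $\pi(\xp j) < \delta_i$ we could moreover arrange $\pi(\xp i) < \Lambda^- < \Lambda^+$ with $\Lambda^+ \ge \pi(\xp j)$, i.e. $\Lambda$ separates $\xp i$ from some point — but then (P6) and (P2) would produce, after applying (P6) repeatedly and using that $A_\nu$ is finite, a strictly increasing $\pi$-chain inside $[0,\delta)$ between $\pi(\xp i)$ and $\pi(\xp j)$ of unbounded length below $\delta_i$ if $\pi(\xp j)$ were $\ge \delta_i$, contradiction; more precisely, since $A_\nu$ is finite and $\cf(\delta_i) = \kappa^+ > \omega$, $\delta_i$ cannot be realized as $\pi$ of an element strictly $\prep$-between $\xp i$ and $\xp j$, forcing $\pi(\xp j) \not< \delta_i$, i.e. $\delta_i \le \pi(\xp j) \le \delta_j$.

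The main obstacle I anticipate is exactly this last case: showing $\delta_i \le \pi(\xp j)$ when $i \in D \cup M$ requires combining the finiteness of $A_\nu$ with the high cofinality of $\delta_i$ and the ``filling in'' property (P6), and one must be careful that (P6) only inserts points at $\Lambda^+$ for intervals $\Lambda$ separating the pair, so the argument is not simply ``the chain is cofinal in $\delta_i$'' but rather ``if $\pi(\xp j)$ sat at or above $\delta_i$, property (P6) applied to a suitable $\Lambda \subseteq J(\delta_i)$ with $\pi(\xp i) < \Lambda^- $ and $\Lambda^+ < \delta_i < \pi(\xp j)$ would force a new element of $A_\nu$ with $\pi$-value $\Lambda^+$, and iterating along a cofinal sequence of such $\Lambda^+$ in $\delta_i$ contradicts $|A_\nu| < \kappa \le \kappa^+ = \cf(\delta_i)$.'' Writing this contradiction argument precisely — in particular checking that the $\Lambda$'s can be chosen with $\Lambda^+$ cofinal in $\delta_i$, which uses $\sup J(\delta_i) = \delta_i$ and property (ii) of the tree $\mathbb{I}$ — is the delicate part; everything else is bookkeeping with (P2) and the definitions of $\delta_i$.
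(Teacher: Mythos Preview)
Your approach has a genuine gap, and you are working much too hard. The paper's entire proof is the single observation that $x_{\nu,i}\preceq_\nu x_{\nu,j}$ implies $\pi(x_{\nu,i})\le \pi(x_{\nu,j})$ by (P2). The point you are missing is that, by the isomorphisms $h_{\nu,\mu}$ in condition (C), the relation $x_{\nu,i}\preceq_\nu x_{\nu,j}$ holds for \emph{one} $\nu$ if and only if $x_{\mu,i}\preceq_\mu x_{\mu,j}$ holds for \emph{every} $\mu<\kappa^+$. Hence $\pi(x_{\mu,i})\le \pi(x_{\mu,j})$ for all $\mu$, and taking suprema immediately gives $\delta_i\le\delta_j$ regardless of which of $K,F,D,M$ the indices $i,j$ lie in. You actually reached exactly this point in your second paragraph (``$x_{\mu,i}\preceq_\mu x_{\mu,j}$ forces $\pi(x_{\mu,i})\le\pi(x_{\mu,j})$'') and then inexplicably abandoned it, saying ``this does not yet give a contradiction''---but you were not looking for a contradiction, you were one sup away from the conclusion.

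The alternative route you then pursue is broken. Your intermediate target $\delta_i\le\pi(x_{\nu,j})$ for a \emph{fixed} $\nu$ is simply false: take $i,j\in D$ with $\delta_i=\delta_j$ and $x_{\nu,i}\prec_\nu x_{\nu,j}$ (nothing in the setup excludes this); then $\pi(x_{\nu,j})<\delta_j=\delta_i$ by Claim~\ref{Claim-2.5}(a). Your (P6) argument is correspondingly confused: as written, you assume ``$\pi(x_{\nu,j})$ sat at or above $\delta_i$'' and derive a contradiction from $|A_\nu|<\kappa<\kappa^+=\cf(\delta_i)$, but that would prove $\pi(x_{\nu,j})<\delta_i$, the opposite of what you want. (You also call $A_\nu$ ``finite'' at one point; it has size ${<}\kappa$.) The claim really is a triviality once you remember that the hypothesis propagates to all $\mu$ simultaneously.
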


\begin{proof} $\xp i
\prep \xp j$  implies $\pi(\xp i)\le
\pi( \xp j)$ by (P2). 
\end{proof}

\begin{claim}
\label{Claim-2.11}  Assume $i,j\in
{\sigma}$. If $\xp i\prep \xp j$ then either
${\climit}_i={\climit}_j$ or there is $a\in \aker\cap B_S$ with $\xp
i\prep a\prep \xp j $.
\end{claim}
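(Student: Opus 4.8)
The plan: By Claim~\ref{Claim-2.10} we have ${\climit}_i\le{\climit}_j$, so assume ${\climit}_i<{\climit}_j$ and look for $a\in\aker\cap B_S$ with $\xp i\prep a\prep\xp j$. Since ${\climit}_i\ne{\climit}_j$ we get $i\ne j$, hence $\xp i\ne\xp j$, so $\pi(\xp i)<\pi(\xp j)$ by (P2), and then (P3) forces $\pib(\xp i)=S$; thus $i\in K\cup D$. If $i\in K$ then $\xp i\in\aker\cap B_S$ and $a=\xp i$ works, so suppose $i\in D$. By Claim~\ref{Claim-2.5}(a), $\cf({\climit}_i)=\oot$, so $i\in\sigma_2$, and by Proposition~\ref{Proposition-2.1}, $J({\climit}_i)^+={\climit}_i$. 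Since $\pi(\xq i)\in J({\climit}_i)\setminus{\gamma}({\climit}_i)$ for every index ${\mu}$ (condition~\ref{good1}) and $\underline{\gamma}({\climit}_i)<{\gamma}({\climit}_i)$ by the definition of ${\gamma}$ on $\sigma_2$, we obtain, for every ${\mu}<\oot$,
\begin{equation*}
J({\climit}_i)^-\le\underline{\gamma}({\climit}_i)<{\gamma}({\climit}_i)\le\pi(\xq i)<{\climit}_i .
\end{equation*}

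Now I pass to a large enough index. If $j\in K\cup F$ then $\pi(\xp j)={\climit}_j>{\climit}_i$ and I take ${\mu}={\nu}$. If $j\in D\cup M$ then $\cf({\climit}_j)=\oot$ (Claim~\ref{Claim-2.5}(a)) and ${\climit}_j>{\climit}_i$, so some ${\mu}$ with ${\nu}<{\mu}<\oot$ satisfies $\pi(\xq j)>{\climit}_i$, and then $\xq i\preq\xq j$ via the isomorphism $h_{{\nu},{\mu}}$ of (C). In both cases $\xq i\preq\xq j$ with $\pi(\xq i)<{\climit}_i<\pi(\xq j)$, so $\xq i\prec_{{\mu}}\xq j$, and by the displayed inequalities ${\Lambda}:=J({\climit}_i)\in\mathbb I$ separates $\xq i$ from $\xq j$. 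By (P6) there is $z\in A_{\mu}$ with $\xq i\prec_{{\mu}}z\prec_{{\mu}}\xq j$ and $\pi(z)={\Lambda}^+={\climit}_i$.

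Finally I identify $z$. Since $\pi(z)={\climit}_i\in Z_0\subs Z$, and $\pi(\xq k)\notin Z$ for $k\in D\cup M$ (condition~\ref{good1}), $z$ must equal $\xq k$ for some $k\in K\cup F$. If $k\in F$ then $\pib(z)\ne S$ while $z\prec_{{\mu}}\xq j$ and $z\ne\xq j$ (as $\pi(z)={\climit}_i<\pi(\xq j)$), contradicting (P3); hence $k\in K$, so $z\in\aker$, and (P3) applied once more to $z\prec_{{\mu}}\xq j$, $z\ne\xq j$ gives $\pib(z)=S$. As $k\in K$, $z=\xq k=\xp k\in A_{\nu}$, so pulling $\xq i\prec_{{\mu}}z\prec_{{\mu}}\xq j$ back through $h_{{\nu},{\mu}}^{-1}$ (trivial when ${\mu}={\nu}$), which fixes $\aker$ pointwise, yields $\xp i\prep z\prep\xp j$. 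Thus $a=z\in\aker\cap B_S$ is as required.

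The step I expect to be the main obstacle is the choice of the separating interval: when $\pi(\xp j)\le{\climit}_i$, the interval $J({\climit}_i)$ does not separate $\xp i$ from $\xp j$ over the index ${\nu}$ itself. Passing to an index ${\mu}$ with $\pi(\xq j)>{\climit}_i$ repairs this uniformly, and is legitimate because for $j\in D\cup M$ the ordinals $\pi(\xq j)$ are cofinal in ${\climit}_j>{\climit}_i$ and every $h_{{\nu},{\mu}}$ fixes $\aker$, so the kernel point $z$ found over ${\mu}$ can be transported back to a point strictly between $\xp i$ and $\xp j$ over ${\nu}$.
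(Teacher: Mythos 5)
Your proof is correct and follows the same core strategy as the paper's: reduce to $i\in D$, use $\Lambda=J(\climit_i)$ (with $\Lambda^+=\climit_i$ by Proposition~\ref{Proposition-2.1}) as the separating interval, invoke (P6), and then identify the interpolated point as an element of $\aker\cap B_S$ by observing that its $\pi$-value $\climit_i$ lies in $Z$, which by (\ref{good1}) excludes indices in $D\cup M$, and that (P3) excludes $F$. The one structural difference is your detour through a twin condition $r_\mu$: you worried that $\pi(\xp j)$ might not exceed $\climit_i$ at the index $\nu$ itself, so you pass to a $\mu$ where it does and transport the kernel witness back via $h_{\nu,\mu}$. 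That transport is legitimate (the isomorphism fixes $\aker$ pointwise), but it is avoidable: for $j\in D\cup M$, condition (\ref{good1}) gives $\pi(\xp j)\ge\gamma(\delta_j)>\underline{\gamma}(\delta_j)>\sup(\delta_j\cap Z)\ge\delta_i$, since $\delta_i\in Z_0\cap\delta_j$; this is exactly why $\xi_j$ was defined relative to $\sup(\delta_j\cap Z)$, and it is how the paper argues, so $J(\delta_i)$ already separates $\xp i$ from $\xp j$ in $r_\nu$.
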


\begin{proof}
 Assume that $i,j\not\in K$ and
$\delta_i\neq \delta_j$. By Claim \ref{Claim-2.10}, we have $\delta_i
<\delta_j$. Since $i\in F\cup M$ and  
$\xp i\prep \xp j$
imply  $\xp i = \xp j$
and so   $  \delta_i =\delta_j$, we
have that $i\in  D$, and so $\pi(\xn i)<\delta_i$, $\mbox{
cf}(\delta_i)=\kappa^+$ and $J(\delta_i)^+=\delta_i$  by 
Proposition \ref{Proposition-2.1} . 

Since $\delta_i <\delta_j$, 
we have $\delta_i<\gamma({\delta}_j)< \pi(\xp j)$ by (\ref{good1}), and so 
 $J(\delta_i)$ separates
$\xp i$ from $\xp j$. 
 By (P6), we infer
that there is an $a=\xp k\in \ap$ such that
$\pi(a)=\delta_i$ and $\xp i\prep a \prep \xp j$.

Since $\xn k\ne \xp j$, we have $\xn k\in B_S$, and so   $k\in K\cup D$.
But as $\pi(\xn k)= \delta_i\in Z$  we obtain $k\notin D$ by  (\ref{good1}), and so  $k\in K$, which implies     $a=\xn k\in \aker\cap B_S$.
\end{proof}

\begin{claim}
\label{Claim-2.12} 
If  $\xp i \in \aker\cap \block{S}$
and $ \xp j\in \ap $ are compatible but incomparable in $r_{\nu}$, then 
$\xp k=\ip \{\xp i,\xp j\}\in \aker\cap \block{S}$.
\end{claim}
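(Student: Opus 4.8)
The plan is to analyze $\xp k = \ip\{\xp i, \xp j\}$ and show it must lie in $\aker \cap \block S$. First, note that since $\xp i \in \block S$ and $\xp i \prep \xp k$ would follow only if... — actually wait, $\xp i$ and $\xp j$ are incomparable, so $\xp k$ is a proper lower bound: $\xp k \prep \xp i$ and $\xp k \prep \xp j$ with $\xp k \ne \xp i$ and $\xp k \ne \xp j$. By (P2), since $\xp k \prep \xp i$ and $\xp k \ne \xp i$, we get $\pi(\xp k) < \pi(\xp i)$; similarly $\pi(\xp k) < \pi(\xp j)$. Also $\xp k \prep \xp i$ together with (P3) forces $\pib(\xp k) = S$, since otherwise $\xp k = \xp i$, contradiction. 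So $\xp k \in \block S$, i.e. $k \in K \cup D$. It remains to rule out $k \in D$.

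Next I would use condition (P5) to control $\pi(\xp k)$. Since $\xp i$ and $\xp j$ are incomparable but compatible, (P5) gives $\pi(\xp k) \in \dorb{\xp i}{\xp j}$. Now I need to understand $\dorb{\xp i}{\xp j}$ when $\xp i \in \aker \cap \block S$. Since $\xp i \in \block S$ we have $\pib(\xp i) = S$, so the first case in the definition of $\dorbf$ (which requires $\pib(\xp i) = \pib(\xp j) \ne S$) does not apply; hence $\dorb{\xp i}{\xp j} = \orb{\xp i} \cap \orb{\xp j}$. Because $\xp i \in \block S$, $\orb{\xp i} = \oorb{\xp i} = \oorb{\pi(\xp i)}$. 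Therefore $\pi(\xp k) \in \oorb{\xp i} \subseteq \oorb{\pi(\xp i)}$ where $\xp i \in \aker \cap \block S$.

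Now suppose toward a contradiction that $k \in D$. Then $\xp k \in \block S$, $\xp k \notin \aker$, and by Claim \ref{Claim-2.5}(a), $\pi(\xp k) < \delta_k$ with $\cf(\delta_k) = \kappa^+$; in particular $k \in D \cup M$ with $\pib(\xp k) = S$. But then condition (\ref{omitostar}) applies with the roles reversed: more precisely, (\ref{omitostar}) says $\pi(\xn i) \notin \orb{\xn \ell}$ for $\xn \ell \in B_S \cap \aker$ and $i \in D \cup M$. Taking $i := k$ (which is in $D$) and $\xn \ell := \xp i \in B_S \cap \aker$, we get $\pi(\xp k) \notin \orb{\xp i} = \oorb{\xp i}$. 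This contradicts $\pi(\xp k) \in \oorb{\xp i}$ established above. Hence $k \notin D$, so $k \in K$, which gives $\xp k \in \aker$, and combined with $\xp k \in \block S$ we conclude $\xp k \in \aker \cap \block S$, as required.

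The main obstacle I anticipate is verifying that the first (exceptional) case of the definition of $\dorbf$ genuinely cannot occur here — this rests on $\pib(\xp i) = S$, which is immediate from $\xp i \in \aker \cap \block S$ — and then correctly invoking (\ref{omitostar}) with the indices in the right configuration, being careful that $\xp i$ is in the kernel $\aker$ and sits in $B_S$, while $\xp k$ has index in $D$ (not merely in $D \cup M$, since we have already shown $\pib(\xp k) = S$ rules out $M$). Once these bookkeeping points are pinned down, the contradiction is immediate.
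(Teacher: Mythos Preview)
Your proposal is correct and follows essentially the same route as the paper: deduce $\xp k\in\block S$ from (P3), use (P5) to get $\pi(\xp k)\in\dorb{\xp i}{\xp j}=\orb{\xp i}\cap\orb{\xp j}\subseteq\orb{\xp i}$ (the exceptional case of $\dorbf$ is ruled out by $\pib(\xp i)=S$), and then invoke condition~(\ref{omitostar}) to exclude $k\in D\cup M$, forcing $k\in K$. The only cosmetic difference is that the paper applies (\ref{omitostar}) directly to conclude $k\notin D\cup M$, whereas you first narrow to $k\in K\cup D$ and then rule out $D$; the content is identical.
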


\begin{proof}
First, (P2) implies $\xp k \in B_S$.

Since 
${\pi}(\xp k)={\pi}(\ip{\{\xp i,\xp j\}})
\in \dorb{\xp i}{\xp j}=\orb {\xp i}\cap \orb {\xp j} \subs 
\orb {\xp i}$ by (P5), and $\xp i \in \aker\cap \block{S}$, we have $k\notin D\cup M$ by (\ref{omitostar}).
Thus $k\in K$, and so  $\xp k\in \aker$.

Hence $\xp k=\ip \{\xp i,\xp j\}\in \aker\cap \block{S}$.
\end{proof}

\begin{claim}
\label{Claim-2.13} Assume that  $\xp i $
and $ \xp j$ are compatible but incomparable in $r_{\nu}$. Let
$\xp k=\ip\{\xn i,\xn j\}$. Then either $\xp k\in \aker$ or
${\climit}_i={\climit}_j={\climit}_k$.
\end{claim}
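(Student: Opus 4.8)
The plan is to analyze the inequalities $\xp k \prep \xp i$ and $\xp k \prep \xp j$ using Claims \ref{Claim-2.10}, \ref{Claim-2.11} and \ref{Claim-2.12}, splitting into cases according to whether $\climit_k$ equals $\climit_i$ and $\climit_j$. First I would note that since $\xp k = \ip\{\xn i,\xn j\}$ we have $\xp k \prep \xp i$ and $\xp k \prep \xp j$ (if $\xp k = \undef$ the hypothesis that the two elements are compatible fails, so $\xp k \in \ap$); hence by Claim \ref{Claim-2.10} we get $\climit_k \le \climit_i$ and $\climit_k \le \climit_j$. It therefore suffices to show that if $\climit_k < \climit_i$ or $\climit_k < \climit_j$, then in fact $\climit_i = \climit_j = \climit_k$, i.e.\ both are equalities; equivalently, it is enough to derive a contradiction (or the full equality) from the assumption that at least one of the two inequalities is strict.

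The key step is to apply Claim \ref{Claim-2.11} to the pair $\xp k \prep \xp i$. If $\climit_k \ne \climit_i$, Claim \ref{Claim-2.11} gives an $a \in \aker \cap B_S$ with $\xp k \prep a \prep \xp i$. Now $a$ and $\xp j$ satisfy $\xp k \prep a$ and $\xp k \prep \xp j$, so $a$ and $\xp j$ are $\prep$-compatible. If they were comparable, then $a \prep \xp j$ (since $\xp j \prep a$ together with $a \in B_S$, $\xp j \in \ap$ would force $\xp j = a$, contradicting $a \prep \xp i$ with $\xp i \ne \xp j$ — here one uses (P2) and the incomparability of $\xp i, \xp j$; alternatively $\xp j \prep a \prep \xp i$ would make $\xp i, \xp j$ comparable), and then $a \prep \ip\{\xp i,\xp j\} = \xp k$, so $a = \xp k$ by antisymmetry, whence $\climit_k = \climit_a$ and, since $a \prep \xp i$, Claim \ref{Claim-2.10} plus $a \in \aker$ would make $\climit_k \in \pi''\aker \subs Z$; combined with the chain $a \prep \xp i$ this pins down $\climit_i$. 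If instead $a$ and $\xp j$ are incomparable (hence incomparable but compatible), Claim \ref{Claim-2.12} applies to the pair $a, \xp j$ and yields $\ip\{a,\xp j\} \in \aker \cap B_S$; but $\ip\{a,\xp j\}$ is a lower bound of $a$ and $\xp j$, hence $\prep \xp i$ and $\prep \xp j$, hence $\prep \ip\{\xp i,\xp j\} = \xp k \prep a$, so $\ip\{a,\xp j\} \prep a$, and since $a = \ip\{a,\xp j\}$ would give $a \prep \xp j$, we again reduce to the comparable case. In every branch one concludes $\xp k \in \aker$ (so $k \in K$, $\climit_k \in Z$) or directly $\climit_i = \climit_j = \climit_k$; symmetrically using $\xp k \prep \xp j$ handles the other inequality. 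Finally, if $\xp k \in \aker \cap B_S$, then $\xp k \prep \xp i$ and $\xp k \prep \xp j$ with $\xp k \in \aker$, and one reads off from Claim \ref{Claim-2.12} applied in reverse — or more simply from the fact that $\xp k$ is the infimum — that $\climit_i = \climit_j = \climit_k$, since any strict inequality would, via Claim \ref{Claim-2.11}, insert an element of $\aker \cap B_S$ strictly between, contradicting minimality of the infimum within $\aker \cap B_S$.

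The main obstacle I expect is the bookkeeping in the "comparable" subcases: repeatedly one has an element $a \in B_S$ (or $\ip\{a,\xp j\} \in B_S$) sitting below $\xp i$ or $\xp j$ and needs to argue that comparability with $\xp j$ forces it to coincide with $\xp k$, using (P2), (P3) and the fact that $\xp i, \xp j$ are incomparable. The cleanest way to organize this is probably to prove the auxiliary statement: \emph{if $b \in \aker \cap B_S$ with $b \prep \xp i$ and $b$ compatible with $\xp j$, then $b \prep \xp k$}; this follows because $b$ compatible with $\xp j$ plus $b \in \aker \cap B_S$ forces (via Claim \ref{Claim-2.12} in the incomparable case, or directly in the comparable case) that $b$ and $\xp j$ have an infimum lying $\prep$-below both, hence $\prep \xp k$, and then a descent argument collapses everything onto $\xp k$. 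Once this auxiliary fact is isolated, the three cases $\climit_k = \climit_i = \climit_j$, $\climit_k < \climit_i = \climit_j$, and the remaining configurations all close quickly, and the claim follows.
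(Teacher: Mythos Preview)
Your approach is essentially the paper's: assume $\xp k\notin\aker$, suppose $\climit_k\ne\climit_i$, interpose $a\in\aker\cap B_S$ with $\xp k\prep a\prep \xp i$ via Claim~\ref{Claim-2.11}, then invoke Claim~\ref{Claim-2.12}. But you miss the one-line observation that makes this work: since $\xp k\prep a$ and $\xp k\prep\xp j$, and any common lower bound of $a,\xp j$ is also below $\xp i,\xp j$ and hence below $\xp k$, we have $\xp k=\ip\{a,\xp j\}$ exactly. Now $a$ and $\xp j$ are incomparable (if $a\prep\xp j$ then $a\prep\xp k\prep a$ forces $a=\xp k\in\aker$; if $\xp j\prep a\prep\xp i$ then $\xp i,\xp j$ are comparable), so Claim~\ref{Claim-2.12} gives $\xp k=\ip\{a,\xp j\}\in\aker$, contradiction. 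That is the whole proof. Your incomparable subcase derives $\ip\{a,\xp j\}\prep\xp k$ but never the reverse inequality, so you cannot identify $\ip\{a,\xp j\}$ with $\xp k$, and the sentence ``since $a=\ip\{a,\xp j\}$ would give $a\prep\xp j$, we again reduce to the comparable case'' does not close anything.

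Your final paragraph is based on a misreading of the statement. The claim is a disjunction: either $\xp k\in\aker$, or $\climit_i=\climit_j=\climit_k$. Once you have shown $\xp k\in\aker$ (under the assumption $\climit_k\ne\climit_i$), you are done with that branch; there is nothing to ``read off in reverse'' and no need to argue that $\climit_i=\climit_j=\climit_k$ also holds in that case (it need not). Drop that paragraph entirely, and drop the auxiliary statement you propose in the last paragraph of your plan --- the identity $\xp k=\ip\{a,\xp j\}$ replaces all of it.
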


\begin{proof}
Assume $\xp k\not\in \aker$, i.e.  $k\not\in
K$. If 
${\climit}_k\ne {\climit}_i$, we infer that there is $b\in \aker\cap \block{S}$
with $\xp k\prep b \prep \xp i$ by Claim 
\ref{Claim-2.11}.
 So
$\xp k=\ip\{b,\xp j\}$ and thus $\xp k\in \aker$ by Claim \ref{Claim-2.12},
contradiction.

Thus ${\climit}_i={\climit}_k$, and similarly
${\climit}_j={\climit}_k$.
\end{proof}


\begin{definition}
\label{Definition 2.8} $\{r_{\nu},r_\mu\}$  is a  {\em
good pair} iff
for all $ \{i,j\}\in \br F;2;$ with  ${\climit}_i={\climit}_j$ and 
     $\cf ({\climit}_i)=\oot$ we have 
\begin{equation}\tag{$\blacktriangle$}\label{eq:good}
 \dorb{\xn i }{\xm j}\supset \overline{o}(\delta_i)\cap \gamma(\delta_i).
\end{equation}
\end{definition}

\begin{claim}\label{good-pair}
There are ${\nu}<{\mu}<{\kappa}^+$ such that the pair $\{r_{\nu},r_{\mu}\}$ is good. 
\end{claim}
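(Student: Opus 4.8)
The goal is to find $\nu<\mu<\kappa^+$ for which the single condition $(\blacktriangle)$ holds for every pair $\{i,j\}\in[F]^2$ with $\delta_i=\delta_j$ and $\cf(\delta_i)=\kappa^+$. The plan is to use the $\kappa^+$-strongly unbounded function $\mc F$ on $\lambda$ that is built into the definition of $\dorbf$. First I would observe that for $i\in F$, the points $x_{\nu,i}$ all lie in the same block $\block{\pib(x_{\nu,i})}$ with $\pib(x_{\nu,i})\neq S$; writing $\beta=\pi(x_{\nu,i})$ (which equals $\delta_i$ and is independent of $\nu$ since $i\in K\cup F$ forces $\delta_i=\pi(x_{\nu,i})$), and noting $\cf(\beta)=\cf(\delta_i)=\kappa^+$, the first clause in the definition of $\dorb xy$ applies: $\dorb{x_{\nu,i}}{x_{\mu,j}}=o(x_{\nu,i})\cup\{\eps{\pi(x_{\nu,i})}{\zeta}:\zeta<\mc F\{\rho(x_{\nu,i}),\rho(x_{\mu,j})\}\}$ whenever $\pib(x_{\nu,i})=\pib(x_{\mu,j})$. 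Since $\overline{o}(\delta_i)\cap\gamma(\delta_i)\subseteq\{\eps{\delta_i}{\zeta}:\zeta<\xi_i+\kappa\}$ (because for $i\in\sigma_2$, $\gamma(\delta_i)=\eps{\delta_i}{\xi_i+\kappa}$ and $\overline{o}(\delta_i)$ intersected with $J(\delta_i)=[\ ,\delta_i)$ sits inside $\incof{J(\delta_i)}=\{\eps{\delta_i}{\zeta}:\zeta<\kappa^+\}$), the inclusion $(\blacktriangle)$ reduces to requiring $\mc F\{\rho(x_{\nu,i}),\rho(x_{\mu,j})\}\geq \xi_i+\kappa$, i.e. to bounding $\mc F$ below by some fixed ordinal $\vartheta_i<\kappa^+$ depending only on $i$.

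Next I would set $\vartheta=\sup\{\xi_i+\kappa:i\in F,\ \cf(\delta_i)=\kappa^+\}$; this is $<\kappa^+$ since $|F|\le\sigma<\kappa$ and $\kappa^+$ is regular. Now I want to apply the strong unboundedness of $\mc F$ to this $\vartheta$. To do so, form the family $\mc A=\{a_\nu:\nu<\kappa^+\}$ where $a_\nu=\{\rho(x_{\nu,i}):i\in F\}\subseteq\lambda$. Each $a_\nu$ has size $<\kappa$, and by the $\Delta$-system property together with (D)(b) (for $i\in F$, $x_{\nu,i}\ne x_{\mu,i}$, so $\rho(x_{\nu,i})\ne\rho(x_{\mu,i})$ since they lie in the same block $\block{\pib(x_{\nu,i})}$ whose second coordinates are injective) one can, after a further thinning, assume the $a_\nu$ are pairwise disjoint — this is a routine $\Delta$-system/disjointification argument on the $<\kappa$-sized sets $a_\nu$, using $\kappa^{<\kappa}=\kappa$ and regularity of $\kappa^+$. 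Then strong unboundedness gives $\nu<\mu<\kappa^+$ with $\mc F\{\alpha,\beta\}>\vartheta$ for all $\alpha\in a_\nu$, $\beta\in a_\mu$; in particular $\mc F\{\rho(x_{\nu,i}),\rho(x_{\mu,j})\}>\vartheta\geq\xi_i+\kappa$ for all relevant $i,j$. (Here I should double-check that the hypothesis $\pib(x_{\nu,i})=\pib(x_{\mu,j})$ in the first clause of $\dorbf$ is met: since $\{i,j\}\in[F]^2$ with $\delta_i=\delta_j$, and for $i,j\in F$ the block index $\pib(x_{\nu,i})$ is determined by $\pi(x_{\nu,i})=\delta_i$ — as $\block{\zeta}=\{\zeta\}\times[\kappa,\lambda)$ — we do get $\pib(x_{\nu,i})=\pib(x_{\mu,j})$ when $\delta_i=\delta_j$.)

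Finally I would assemble: for any $\{i,j\}\in[F]^2$ with $\delta_i=\delta_j$ and $\cf(\delta_i)=\kappa^+$, the computation above gives $\dorb{x_{\nu,i}}{x_{\mu,j}}=o(x_{\nu,i})\cup\{\eps{\delta_i}{\zeta}:\zeta<\mc F\{\rho(x_{\nu,i}),\rho(x_{\mu,j})\}\}\supseteq\{\eps{\delta_i}{\zeta}:\zeta<\xi_i+\kappa\}\supseteq\overline{o}(\delta_i)\cap\gamma(\delta_i)$, which is exactly $(\blacktriangle)$. Hence $\{r_\nu,r_\mu\}$ is a good pair.

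**Main obstacle.** The genuinely delicate point is the disjointification of $\mc A=\{a_\nu:\nu<\kappa^+\}$: strong unboundedness is stated only for families of \emph{pairwise disjoint} sets, whereas the $a_\nu$ share the fixed coordinates coming from $\aker$ (for $i\in K$) — but those indices are excluded since $K\cap F=\emptyset$, so in fact $a_\nu$ uses only $i\in F$ and the $\Delta$-system kernel contribution is empty there; still, two distinct $a_\nu,a_\mu$ need not be disjoint a priori, so one must thin the sequence $\langle r_\nu:\nu<\kappa^+\rangle$ once more to achieve genuine pairwise disjointness of the $a_\nu$'s (a standard argument: the $a_\nu$ form a $\kappa^+$-sized family of $<\kappa$-sets in $\lambda$, so by the $\Delta$-system lemma for $<\kappa$-sets they have a common root $r$, and since $\rho(x_{\nu,i})\ne\rho(x_{\mu,i})$ for $i\in F$ the root contributes nothing in the $i\in F$ coordinates — removing the root yields pairwise disjoint sets, and applying strong unboundedness to the root-free family and then arguing that $\mc F$ restricted to the root is bounded by a fixed ordinal $<\kappa^+$ which we absorb into $\vartheta$). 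Care is also needed to confirm that after all the earlier thinnings (A)–(H) we still have $\kappa^+$-many conditions left to apply this final combinatorial step, which is fine since each thinning preserves a set of size $\kappa^+$.
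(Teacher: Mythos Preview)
Your approach is exactly the paper's: set $\vartheta=\sup\{\xi_i+\kappa:i\in F\cap\sigma_2\}$ and apply the $\kappa^+$-strong unboundedness of $\mc F$ to the family $\{a_\nu:\nu<\kappa^+\}$, $a_\nu=\{\rho(x_{\nu,i}):i\in F\}$. The paper's proof is in fact even terser than yours and does not spell out the disjointification at all.

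Two minor corrections. First, your intermediate inclusion $\overline{o}(\delta_i)\cap\gamma(\delta_i)\subseteq\{\eps{\delta_i}{\zeta}:\zeta<\xi_i+\kappa\}$ is false: $\overline{o}(\delta_i)=o(\delta_i)\cup(E(J(\delta_i))\cap\delta_i)$, and $o(\delta_i)$ typically has elements below $J(\delta_i)^-=\eps{\delta_i}{0}$ (for instance $0\in o(\delta_i)$ whenever $n(\delta_i)>0$). Your justification only controls $\overline{o}(\delta_i)\cap J(\delta_i)$, not $\overline{o}(\delta_i)\cap\gamma(\delta_i)$. The fix is simply not to drop the $o(x_{\nu,i})$ term: since $o(x_{\nu,i})=o(\delta_i)$, one has
\[
\dorb{x_{\nu,i}}{x_{\mu,j}}\supseteq o(\delta_i)\cup\{\eps{\delta_i}{\zeta}:\zeta<\xi_i+\kappa\}=o(\delta_i)\cup\bigl(E(J(\delta_i))\cap\gamma(\delta_i)\bigr)\supseteq\overline{o}(\delta_i)\cap\gamma(\delta_i),
\]
which is $(\blacktriangle)$.

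Second, the disjointification of the $a_\nu$ is simpler than you suggest. After passing to a $\Delta$-system with root $r$ and thinning so that each $\alpha\in r$ is realized as $\rho(x_{\nu,i_\alpha})$ by a fixed index $i_\alpha\in F$, one gets $\rho(x_{\nu,i_\alpha})=\rho(x_{\mu,i_\alpha})$; since $i_\alpha\in F$ forces $\pi(x_{\nu,i_\alpha})=\pi(x_{\mu,i_\alpha})=\delta_{i_\alpha}$, this gives $x_{\nu,i_\alpha}=x_{\mu,i_\alpha}$, contradicting (D)(b). So the root is empty and no absorption into $\vartheta$ is needed.
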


\begin{proof}
Let $$\vartheta = \mbox{sup}\{\xi_\ell + \kappa : \ell\in  \sigma_2\cap F\}.$$
Since $\mc F$ is a $\kappa^+$-strongly unbounded function on $\lambda $
we can find 
${\nu}<{\mu}<{\kappa}^+$ such that for all 
$ \{i,j\}\in \br F;2;$ with  ${\climit}_i={\climit}_j$ and 
     $\cf {\climit}_i=\oot$ we have 
\begin{equation*}
 \mc F\{\rho({\xn i }),\rho({\xm j})\}\ge  \vartheta.
\end{equation*}
Hence \eqref{eq:good} holds.
 \end{proof}

To finish the proof of Lemma \ref{Lemma-2.3} we will show that 
\begin{equation}\tag{$\dag$}
\text{If $\{r_{\nu},r_{\mu}\}$ is a good pair, then 
$r_{\nu}$ and $r_{\mu}$ are compatible.} 
\end{equation}

So, assume that $\{r_{\nu},r_{\mu}\}$ is a good pair.

Write
$\climit_{\xp i}=\climit_{\xq i}=\climit_i$.

If $s=\xp i$ write $s\in K$ iff $i\in K$. Define  $s\in
F$, $s\in {M}$, $s\in {D}$ similarly.

In order to amalgamate conditions $r_{\nu}$ and $r_{\mu}$, we will use a
refinement of the notion of amalgamation given in \cite[Definition
2.4]{M}. 

 Let
$A'=\{\xp i:i\in F\cup D\cup M\}$.  For $x\in (\ap\setm \aq)\cup (\aq\setm \ap)$ define the {\em twin $x'$} of $x$ in a natural way:
$x'=h_{{\nu},{\mu}}(x)$ for $x\in \ap\setm \aq $, and 
$x'=h^{-1}_{{\nu},{\mu}}(x)$ for $x\in \aq\setm \ap$.

Let $\rank:\<A',\prep\restriction A'\>\to \theta$ be an
order-preserving injective function for some ordinal
$\theta<\oo$,
and  for $x\in A'$  let 
\begin{equation*}
 {\beta}_x=\eps{\delta_x}{\underline{\gamma}(\delta_x)+\rank(x)}.
\end{equation*}

Since
{$\cf(\gamma(\delta_x))= \oo$}
and $|A'|<{\kappa}$ we have
\begin{equation*}
{\beta}_x\in \bigl(\eorb{{\climit}_x}\cap
[\underline{\gamma}(\delta_x),\gamma(\delta_x))) \setm \sup\{
  {\beta}_{z}: \rank(z)<\rank( x)
\}.
\end{equation*}

For $x\in A'$ let
\begin{equation*}
y_x=
\<{\beta}_x,0\>,
\end{equation*}
and put
\begin{equation*}
Y=\{y_x:x\in A'\}.
\end{equation*}
So, for every $x\in A'$, $y_x\in B_S$ with $\pi(y_x)<\pi(x)$.

Define functions $g:Y\to \ap$ and $\gbar:Y\to \aq$ as follows: 
\begin{equation*}
\text{ $g(y_x)=x$ and $\gbar(y_x)=x'$},
\end{equation*}
where $x'$ is the ``twin'' of $x$ in $\aq$.

\vspace{2mm} Now, we are ready to start to define the common
extension $r=\<\ar,\prer,\ir\>$ of $r_{\nu}$ and $r_{\mu}$. First, we define
the universe $\ar$ as 
$$\ar = \ap\cup \aq\cup Y.$$

Clearly, ${\ar}$ satisfies (P1). Now, our purpose is to define
$\preceq$.

Extend the definition  of $g$ as follows: $g:\ar\to \ap$ is a
function,
\begin{displaymath}
g(x)=\left\{ 
\begin{array}{ll}
x&\text{if $x\in \ap$,}\\  
x'&\text{if $x\in \aq\setm \ap$,}\\  
s&\text{if $x=y_s$ for some $s\in A'$.}  
\end{array}
\right .  
\end{displaymath}

We introduce
two relations on $A_p\cup A_q\cup Y$ as follows:
\begin{equation}\notag
  \begin{array}{lcl}
\pre{1}&=&\{\<y,x\>\in Y\times \ar: g(y)\prep g(x)\},\medskip\\
\pre{2}&=&\{\<x,z\>\in  \ar\times \ar:  \exists a\in \aker\
g(x)\prep a \prep g(z)\}.
  \end{array}
\end{equation}
Then, we put
\begin{equation*}\tag{$\bigstar$}\label{preceq}
\prer=\preceq_{\nu}\cup \preceq_{\mu}\cup \pre{1}\cup \pre{2}.
\end{equation*}

The following claim is well-known and straightforward.
\begin{claim}
\label{Claim-2.18}  $\prepq=\prer\restriction (\ap\cup \aq)$ is the
partial order on $\ap\cup \aq$ generated by $\prep\cup \preq$.
\end{claim}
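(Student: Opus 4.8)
The plan is to verify the two inclusions separately. Write $\prepq = \prer \restriction(\ap\cup\aq)$ and let $\prer^* $ denote the partial order on $\ap\cup\aq$ generated by $\prep\cup\preq$; I must show $\prepq = \prer^*$. For the inclusion $\prer^* \subseteq \prepq$: since $\prep\cup\preq \subseteq \prer$ trivially, and since $\prer$ restricted to $\ap\cup\aq$ contains $\prep\cup\preq$ and is transitive (once we know $\prer$ is a partial order, which is established separately — but here we only need that the relation $\prer$, as a subset of $\ar\times\ar$, has transitive closure equal to itself, or more simply that any $\prer$-chain between two points of $\ap\cup\aq$ can be shortcut; I will actually just invoke that $\prer$ is transitive, noting this is part of the standard verification), every $\prep\cup\preq$-generated pair lies in $\prer\restriction(\ap\cup\aq) = \prepq$.

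For the reverse inclusion $\prepq\subseteq\prer^*$, the point is that a pair $\<u,v\>\in\prer$ with $u,v\in\ap\cup\aq$ must arise, through the transitive closure defining $\prer$ from $\preceq_{\nu}\cup\preceq_{\mu}\cup\pre{1}\cup\pre{2}$, via a chain that can be rerouted to avoid $Y$. First I would observe that $\pre{1}$ never contributes a pair with both endpoints in $\ap\cup\aq$, since by definition $\pre{1}\subseteq Y\times\ar$ and $Y\cap(\ap\cup\aq)=\emptyset$; so the first element of any $\pre{1}$-step lies in $Y$. Next, a $\pre{2}$-step $\<x,z\>$ witnessed by $a\in\aker$ with $g(x)\prep a\prep g(z)$: if $x\in\ap\cup\aq$ then $g(x)$ is either $x$ (when $x\in\ap$) or $x'$ (when $x\in\aq\setm\ap$), and in the latter case the isomorphism $h_{\nu,\mu}$ together with $h_{\nu,\mu}\restriction\aker=\id$ gives $x=h_{\nu,\mu}^{-1}(x')\preceq_{\mu}h_{\nu,\mu}^{-1}(a)=a$, so in all cases $x\preceq_{\nu}a$ or $x\preceq_{\mu}a$ and likewise $a\preceq_{\nu}z$ or $a\preceq_{\mu}z$ for $z\in\ap\cup\aq$; hence the single $\pre{2}$-step is replaced by (at most) two steps from $\prep\cup\preq$ through the point $a\in\aker\subseteq\ap\cap\aq$. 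Similarly an intermediate $Y$-vertex $y_s$ in a chain is flanked by a $\pre{1}$-step into it — wait, $\pre{1}$ goes out of $Y$, not into it; so a vertex $y_s\in Y$ occurring strictly inside a chain between two points of $\ap\cup\aq$ is entered by some step and left by a $\pre{1}$-step $\<y_s,w\>$ with $g(y_s)=s\prep g(w)$. The step entering $y_s$ must be a $\pre{2}$-step $\<x,y_s\>$ (as $\preceq_{\nu},\preceq_{\mu},\pre{1}$ all have second coordinate outside $Y$), giving $g(x)\prep a\prep g(y_s)=s$; combining, $g(x)\prep a\prep s\prep g(w)$, so the two steps through $y_s$ are absorbed into a chain through $a$ and, via $g$, into $\prep\cup\preq$-steps on $\ap\cup\aq$ (again using the $h_{\nu,\mu}$-argument to pull $g$-images back to actual elements). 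Iterating this rerouting on every $Y$-vertex and every $\pre{2}$-step produces a chain entirely within $\ap\cup\aq$ using only $\prep\cup\preq$-edges, witnessing $\<u,v\>\in\prer^*$.

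The main obstacle — and the reason the claim is labelled "straightforward" rather than trivial — is bookkeeping the rerouting argument cleanly: one must handle the case distinctions for whether each vertex lies in $\ap\setm\aq$, $\aq\setm\ap$, or $\aker$, consistently translating between an element $x$ and its twin $x'$ and its $g$-image via the isomorphism $h_{\nu,\mu}$ and the fact that $h_{\nu,\mu}$ fixes $\aker$ pointwise, and checking that $Y$-vertices really cannot chain to one another directly (which is clear since $\pre{1}$ has domain $Y$ but $\pre{1}$'s range meets $Y$ only if some $g(x)\prep g(y_t)=t$, i.e. one would need a further incoming step — in any case a $y_s$–$y_t$ adjacency would have to be a $\pre{2}$-edge, handled as above). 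Once the rerouting is set up, each individual replacement is immediate from the definitions of $\pre{1}$, $\pre{2}$, and $g$, so I would present the argument as a short induction on the length of a shortest $\prer$-chain between $u$ and $v$.
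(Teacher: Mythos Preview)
Your plan rests on a misreading of definition $(\bigstar)$: the relation $\prer$ is the \emph{plain union} $\prep\cup\preq\cup\pre{1}\cup\pre{2}$, not a transitive closure of those four relations. (That this union happens to be transitive is precisely the content of Sublemma~\ref{Lemma-2.19}, proved afterward.) This misreading inverts which inclusion carries the work.

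For the inclusion $\prepq\subseteq\prer^*$ there is no chain to reroute: a pair $\<u,v\>\in\prer$ with $u,v\in\ap\cup\aq$ lies, by definition of $\prer$ as a union, in one of the four relations. Since $\pre{1}\subseteq Y\times\ar$ and $Y$ is disjoint from $\ap\cup\aq$, the pair lies in $\prep\cup\preq\cup\bigl(\pre{2}\restriction(\ap\cup\aq)\bigr)$, and your $h_{{\nu},{\mu}}$-argument correctly shows that any $\pre{2}$-pair factors through some $a\in\aker$ as two $(\prep\cup\preq)$-steps. That is the whole proof of this direction; the paragraph about excising intermediate $Y$-vertices is addressing a situation that does not arise.

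The inclusion $\prer^*\subseteq\prepq$ is the one with content, and here you propose to invoke transitivity of $\prer$. That is circular: transitivity of $\prer$ is Sublemma~\ref{Lemma-2.19}, whose proof (Case~1) explicitly appeals to Claim~\ref{Claim-2.18}. The correct argument is direct. In a $(\prep\cup\preq)$-chain $x_0\,R_0\,x_1\,R_1\cdots R_{n-1}\,x_n$ in $\ap\cup\aq$, after collapsing consecutive steps of the same type the $R_k$ alternate, so every internal vertex lies in $\ap\cap\aq=\aker$. Because $h_{{\nu},{\mu}}\restriction\aker=\id$, each internal step is simultaneously a $\prep$- and a $\preq$-step, and the chain collapses to a single $\prep$- or $\preq$-step, or to two steps $x_0\,R\,a\,R'\,x_n$ through one $a\in\aker$. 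In the latter case one checks from the definition of $g$ (and the isomorphism again) that $g(x_0)\prep a\prep g(x_n)$, whence $\<x_0,x_n\>\in\pre{2}\subseteq\prer$. You already have all of these ingredients in your ``rerouting'' paragraph; they simply need to be reassigned to this direction and detached from the nonexistent transitive-closure chain.
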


The following straightforward claim 
will be used several times in our arguments.
\begin{claim}\label{pr:xlez-gxegz}
If $x\prer z$ then $g(x)\prep g(z)$. 
\end{claim}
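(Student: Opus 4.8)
The plan is to verify the implication $x\prer z\Rightarrow g(x)\prep g(z)$ by a straightforward case analysis following the four‑fold decomposition $\prer=\preceq_{\nu}\cup\preceq_{\mu}\cup\pre{1}\cup\pre{2}$ given in \eqref{preceq}. Since $\prer$ is defined as that union (and one should first check, or has already checked, that the union is itself transitive — this is the content of the surrounding construction, and in particular Claim \ref{Claim-2.18} handles the part living inside $\ap\cup\aq$), it suffices to treat a single ``step'' $\<x,z\>$ coming from one of the four pieces; the general case then follows by composing such steps, using that $g$ preserves $\prep$ along each step and that $\prep$ is transitive.

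First I would dispose of the two easy pieces. If $x\prep z$ (a step from $\preceq_{\nu}$), then $x,z\in\ap$, so $g(x)=x\prep z=g(z)$. If $x\preq z$ (a step from $\preceq_{\mu}$), then $x,z\in\aq$; writing $g(x),g(z)$ for their images (each is either the point itself, if it lies in $\ap$, or its twin under $h_{{\nu},{\mu}}$), the fact that $h_{{\nu},{\mu}}$ is an isomorphism of $\<\aq,\preq\>$ onto $\<\ap,\prep\>$ by (C) gives $g(x)\prep g(z)$; one must also note that the kernel $\aker$ is fixed by $h_{{\nu},{\mu}}$ (by (C)(a)), so the definition of $g$ on $\aq\cap\ap$ is consistent and this step goes through. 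For the piece $\pre{1}$, a step $\<y,x\>$ with $y\in Y$ is by definition exactly $g(y)\prep g(x)$, which is the desired conclusion. For the piece $\pre{2}$, a step $\<x,z\>$ is by definition witnessed by some $a\in\aker$ with $g(x)\prep a\prep g(z)$; since $a\in\aker\subs\ap$ we have $g(a)=a$, and transitivity of $\prep$ gives $g(x)\prep g(z)$.

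Finally I would assemble these: given an arbitrary $x\prer z$, pick a finite chain $x=u_0,u_1,\dots,u_n=z$ with each $\<u_\ell,u_{\ell+1}\>$ lying in one of the four pieces (such a chain exists because $\prer$ is the transitive closure of the union — or, if one has already shown the union is transitive, $n=1$ suffices), apply the single‑step analysis above to get $g(u_\ell)\prep g(u_{\ell+1})$ for each $\ell$, and conclude $g(x)\prep g(z)$ by transitivity of $\prep$. The only genuinely delicate point — and the one I would be most careful about — is the well‑definedness and coherence of the extended function $g$ where its clauses overlap (namely on $\aker=\ap\cap\aq$ and, more subtly, making sure the ``$x'$'' clause for $x\in\aq\setm\ap$ is compatible with how $g$ acts via $\pre{1}$ and $\pre{2}$); once that bookkeeping is pinned down using (C)(a) and the definitions of $h_{{\nu},{\mu}}$ and the $y_x$'s, the claim is immediate, which is why it is labelled straightforward.
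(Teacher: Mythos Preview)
Your four-case analysis is correct and is exactly what the paper intends; the paper gives no proof and simply calls the claim ``straightforward.'' One clarification about the framing, though: at this point in the paper $\prer$ is \emph{defined} as the union $\preceq_{\nu}\cup\preceq_{\mu}\cup\pre{1}\cup\pre{2}$, not as a transitive closure, and transitivity has not yet been established --- in fact Sublemma~\ref{cl:prerprec}, which comes immediately after, \emph{uses} this claim to prove that the union is already a partial order. Hence $x\prer z$ already means $\<x,z\>$ lies in one of the four pieces, and your single-step analysis is the complete argument; the final paragraph about finite chains is unnecessary (and invoking transitivity there would be circular).
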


\begin{sublemma}\label{cl:prerprec}
\label{Lemma-2.19} $\prer$ is a partial
order on ${\ap}\cup {\aq}\cup Y$.
\end{sublemma}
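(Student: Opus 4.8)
The goal is to show that $\prer$, defined in (\bigstar) as $\preceq_{\nu}\cup\preceq_{\mu}\cup\pre1\cup\pre2$, is a partial order on $\ar=\ap\cup\aq\cup Y$. Reflexivity is immediate since $\preceq_{\nu}$ and $\preceq_{\mu}$ are reflexive and cover $\ap\cup\aq$, while for $y_x\in Y$ we have $g(y_x)=x\prep x$ (if $x\in\ap$) or $=x'\preq x'$, which puts $\<y_x,y_x\>$ into... actually we must be slightly careful: $\<y_x,y_x\>$ is not obviously in $\pre1$ since $\pre1\subs Y\times\ar$ requires $g(y)\prep g(x)$ — and $g(y_x)=x\prep x=g(y_x)$ works, so yes $\<y_x,y_x\>\in\pre1$. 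So reflexivity holds. The two substantial tasks are \emph{antisymmetry} and \emph{transitivity}.

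\emph{First} I would record the basic compatibility facts between $g$ and $\prer$. Claim~\ref{pr:xlez-gxegz} already gives $x\prer z\Rightarrow g(x)\prep g(z)$; I would also note the companion statement with $g$ replaced by the analogous map $\gbar$ into $\aq$ (defined on $Y$ and extendable to $\ar$ by $\gbar(x)=x'$ for $x\in\ap\setm\aq$, $\gbar(x)=x$ for $x\in\aq$), giving $x\prer z\Rightarrow \gbar(x)\preq\gbar(z)$. These two projections are the main tool. For \textbf{antisymmetry}: suppose $x\prer z$ and $z\prer x$ with $x\ne z$. Then $g(x)\prep g(z)\prep g(x)$, so $g(x)=g(z)$ by antisymmetry of $\prep$; similarly $\gbar(x)=\gbar(z)$. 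If both $x,z\in\ap$ then $x=g(x)=g(z)=z$, contradiction; symmetrically if both lie in $\aq$. The remaining cases involve $Y$ or mixed membership. If $x\in Y$, say $x=y_s$, then $\pi(x)=\beta_s<\pi(s)$; but $g(x)=s$, and if $z\notin Y$ then $g(z)=z$ or $z'$, forcing $\pi(z)=\pi(s)>\pi(x)$, whereas $x\prer z$ and $z\prer x$ force $\pi(x)=\pi(z)$ by (P2) — contradiction. If both $x,z\in Y$, then $g(x)=g(z)$ means the underlying $A'$-elements coincide, hence $x=z$. The mixed case $x\in\ap\setm\aq$, $z\in\aq\setm\ap$ (or vice versa) is handled by $g(x)=g(z)$: $g(x)=x$ and $g(z)=z'$, so $x=z'=h_{\nu,\mu}^{-1}(z)$, but $x\in\ap\setm\aq$ and its twin $z=h_{\nu,\mu}(x)\in\aq\setm\ap$ are distinct elements with, by (C)(b) and the $\Delta$-system structure, different $\rho$-values or different $\pi$-values — in any case a pair on which $\prep,\preq$ say nothing and which therefore cannot be $\prer$-comparable without routing through $Y$ or $\aker$, which is precisely what $\pre1,\pre2$ encode; I would check directly that no such routing closes a $2$-cycle.

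\emph{Second}, \textbf{transitivity}, which I expect to be the main obstacle. Suppose $x\prer z\prer w$; I must show $x\prer w$. The clean way is to argue that $\prer$ equals the transitive closure of $\preceq_{\nu}\cup\preceq_{\mu}\cup\pre1\cup\pre2$ and that this closure adds nothing beyond the four listed pieces. Concretely, I would take a witnessing chain $x=u_0\ \mathrel{R}\ u_1\ \mathrel{R}\cdots\mathrel{R}\ u_n=w$ where each step is in one of the four relations, and show by induction on $n$ that $x\prer w$ already from one of the four clauses. The key reduction steps: (i) two consecutive $\preceq_{\nu}$-steps collapse to one (transitivity of $\prep$), likewise for $\preceq_{\mu}$; (ii) a $\prep$-step followed by a $\preq$-step (or vice versa) — here one uses Claim~\ref{Claim-2.18}/\ref{Claim-2.11}: the only way $\xp i\prep a$ and $a\preq \xq j$ with the middle element shared is that $a\in\aker$, and then $\<\xp i,\xq j\>\in\pre2$ by definition; (iii) any step out of $Y$ followed by anything, or anything followed by a step into... note $\pre1$ only has domain $Y$, so a $\pre1$-step $\<y,x\>$ can only be the \emph{first} link from $y$, and then $g(y)\prep g(x)$, and subsequent links project under $g$ to a $\prep$-chain from $g(x)$, giving $g(y)\prep g(w)$ hence $\<y,w\>\in\pre1$; (iv) $\pre2$-steps compose with everything via the $\aker$-element, again landing in $\pre2$. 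The case analysis is finite but fiddly; the one genuinely delicate point is when the chain leaves $\ap$, enters $\aq$, and returns, which is exactly where Claim~\ref{Claim-2.11} (comparability across $\prep$ and $\preq$ forces an $\aker\cap B_S$ intermediary) does the work, together with Claim~\ref{Claim-2.12}. I would organize the write-up as: project the whole chain under $g$ to get $g(x)\prep g(w)$, project under $\gbar$ to get $\gbar(x)\preq\gbar(w)$, and then do a short case split on which of $\ap,\aq,Y$ contain $x$ and $w$ to read off which clause of (\bigstar) certifies $x\prer w$ — using that whenever the chain "changes side" it must pass through $\aker$, which is common to both and hence lets $\pre2$ absorb the transition. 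The antisymmetry argument above is then essentially the $n=2$, cyclic instance of this analysis.
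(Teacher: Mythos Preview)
Your approach is correct but substantially more elaborate than the paper's. Two main differences:

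\textbf{Antisymmetry.} The paper dismisses it in one word (``trivially''), and rightly so: one checks directly from the definitions that each of the four relations $\preceq_\nu,\preceq_\mu,\pre1,\pre2$ respects (P2), i.e.\ $s\ne t$ and $\langle s,t\rangle$ in the relation forces $\pi(s)<\pi(t)$. Your detailed case analysis with twins, $\gbar$, and ``routing through $Y$ or $\aker$'' is unnecessary.

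\textbf{Transitivity.} The paper also uses Claim~\ref{pr:xlez-gxegz} to get $g(s)\prep g(u)$, but then proceeds far more directly than your chain-reduction plan. It observes that this single fact already finishes the cases $\langle s,u\rangle\in (Y\times\ar)\cup(\ap\times\ap)\cup(\aq\times\aq)$ (the last via the isomorphism $h_{\nu,\mu}$, so no separate map $\gbar$ is needed). In the remaining mixed cases, say $s\in\ap$ and $u\in\aq\cup Y$, the paper notes that one of the two steps $s\prer t$ or $t\prer u$ must \emph{already be} a $\pre2$-step (simply because none of $\preceq_\nu,\preceq_\mu,\pre1$ can link an $\ap$-element to a $Y$-element or to an $\aq\setminus\ap$-element in the required direction). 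That $\pre2$-step hands you the kernel element $a\in\aker$ for free, and combining it with $g(s)\prep g(u)$ gives $s\pre2 u$ immediately. There is no need for Claims~\ref{Claim-2.11} or~\ref{Claim-2.12}; those structural facts about $\aker$ are used later in the amalgamation (for (P4)--(P6)), not here. Your reference to them in step~(ii) is harmless but misdirected: that a shared middle element of a $\prep$-step and a $\preq$-step lies in $\aker$ is just $\ap\cap\aq=\aker$, and Claim~\ref{Claim-2.18} then packages this as ``$\prer\restriction(\ap\cup\aq)$ is the partial order generated by $\prep\cup\preq$'', which the paper invokes once and is done.

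In short: your plan would go through, but the paper's argument is a two-page case split using only $g$, Claim~\ref{Claim-2.18}, and the domain/range constraints built into $\pre1,\pre2$ --- no second projection, no induction on chain length, and no appeal to the deeper orbit lemmas.
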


\begin{proof}We should check that $\prep$ is transitive, because it is trivially reflexive and antisymmetric.

So
let $s\prer t\prer u$. We should show that $s\prer u$.

Since $x\prer z$ implies $g(x)\prep g(z)$, we have 
$g(s)\prep g(t)\prep g(u)$ and so 
\begin{equation}\tag{$\star$}\label{eq:gsgr}
 g(s)\prep g(u).
\end{equation}
If $\<s,u\>\in (Y\times {\ar})\cup ({\ap}\times {\ap})\cup ({\aq}\times {\aq})$,
then $\eqref{eq:gsgr}$ implies $s \pre1 u$ or $s \preceq_{\nu} u$  or $s\preceq_{\mu} u$, which implies   $s\prer u$   by  
(\ref{preceq}).

So we can assume that $s\in \ap$  (the case $s\in \aq$ is similar), and so 
$u\in Y$ or $u\in \aq$.  

\begin{case}
$u\in \aq$. 
\end{case}

If $t\in \ap\cup \aq$, then $s\prepq t\prepq u$, and so $s\prepq u$ by Claim 
\ref{Claim-2.18}. So $s\prer u$.

Assume that $t\in Y$. Then $s\pre{2}t $, and so 
 there is $a\in \aker$ such that $g(s)\prep a\prep g(t)$. Since $t\prer u$ implies   $ g(t)\prep g(u)$, 
we have $g(s)\prep a\prep g(u)$, and so $s\pre 2 u$. Thus $s\prer u$.

\begin{case}
$u\in Y$. 
\end{case}

If $t\in Y$, then $s{\pre 2}t$, and so  
 there is $a\in \aker$ such that $g(s)\prep a\prep g(t)$. Since $t\preceq u $ implies $g(t) \prep g(u)$,
we have  $g(s)\prep a\prep g(u)$, and so $s{\pre 2}u$. Thus $s\prer u$.

Assume that $t\in \ap\cup \aq$. Then $t {\pre 2}u$, and so there is 
$a\in \aker$ such that $g(t)\prep a \prep g(u)$.  
Then $g(s)\prep a\prep g(u)$, and so $s{\pre 2}u$. Thus $s\prer u$.
\end{proof}

So, by the previous Sublemma \ref{cl:prerprec} and by the construction, 
(P2) and (P3) hold for $\prer$.

Next
define the function $\ir: \br \ar;2;\to \ar\cup\{\undef\}$
as follows: 
\begin{equation*}
\ir\supset \ip\cup \iq, 
\end{equation*}
and for $\{s,t\}\in \br \ar;2;\setm (\br \ap;2;\cup \br \aq;2;)$
such that $s$ and $t$ are $\preceq$-compatible, put 
$\ir\{s,t\}=\ir\{s,y_s\}=\ir\{t,y_s\}=y_s$ if $s\in A'$
and $t=s'$, and otherwise
consider the element 
\begin{equation*}
 v=\ip\{g(s),g(t)\},
\end{equation*}
 and let
\begin{displaymath}
\ir\{s,t\}=\left\{ 
\begin{array}{ll}
v&\text{if $v\in \aker$,}\medskip\\  
y_{v}&\text{if $v\notin \aker$.}  
 \end{array}
\right .  
\end{displaymath}
Let $$\ifu\{s,t\}=undef$$ if $s$ and $t$ are not $\preceq$-compatible.

If $s$ and $t$ are compatible, then so are $g(s)$ and $g(t)$
because  $x\prer y$ implies $g(x)\prep g(y)$ by Claim \ref{pr:xlez-gxegz}.
Moreover $\ip\{s,t\}=\iq\{s,t\}$ for $\{s,t\}\in \br \aker;2;$
by condition (C)(e),
so the definition above is meaningful, and gives a function $\ir$.

\begin{claim}\label{cl:monsgs}
If $v\in \aker$ and  $s\in \ar$,  then   
$\pi(v)\in \orb{g(s)}$ iff $\pi(v)\in \orb s$. 
\end{claim}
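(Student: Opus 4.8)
The plan is to analyze the three possible shapes of an element $s\in \ar$ according to the definition of $g$, and in each case trace through how $\orb{g(s)}$ relates to $\orb s$. Recall that $\orb s$ depends only on $\pi(s)$ (and on whether $s\in B_S$), so I first separate on where $s$ lives. If $s\in \ap$ then $g(s)=s$ and the statement is trivial. If $s=y_x$ for some $x\in A'$, then $g(s)=x$ and, by construction, $y_x\in B_S$ with $\pi(y_x)={\beta}_x<\pi(x)={\climit}_x$; since $s\in B_S$ we have $\orb s=\oorb{\pi(s)}$, while $\orb{g(s)}=\orb x\subseteq \eorb{{\climit}_x}$. The inclusion ${\beta}_x\in \eorb{{\climit}_x}$ (established just before the definition of $y_x$) together with the fact that $\alpha\in o(\beta)$ implies $o(\alpha)\subset o(\beta)$ — quoted from \cite[Section 1]{M} — will be used to show both directions of the biconditional.

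The substantive case is $s\in \aq\setm \ap$, where $g(s)=s'=h^{-1}_{{\nu},{\mu}}(s)$ is the twin of $s$ in $\ap$. Here the key point is that $v\in \aker$, so $\pi(v)\in \pi''\aker=\{{\delta}_i:i\in K\}$, and these ordinals are fixed by the isomorphism $h_{{\nu},{\mu}}$. I would argue: $s=\xq i$ for some $i$, so $s'=\xp i$, and $\pi(v)={\delta}_k$ for some $k\in K$; then $\pi(v)\in \orb{\xp i}$ iff $\pi(\xp k)\in \orb{\xp i}$, and by the thinning condition (C)(h) — $\pi(\xn i)\in \orb{\xn j}$ iff $\pi(\xm i)\in \orb{\xm j}$, applied with the roles of the indices chosen so that $\xn k=\xm k=v$ — this holds iff $\pi(\xq k)\in \orb{\xq i}$, i.e. iff $\pi(v)\in \orb s$. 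One must also check that $\orb{\xp i}$ and $\orb{\xq i}$ are ``of the same type'' in the sense that a given $\pi(v)$ lies in one iff it lies in the other; but this is exactly the content of the relevant clauses of (C) (together with (C)(d), which guarantees $\pib(\xp i)=S$ iff $\pib(\xq i)=S$, so that $\orb$ is computed the same way on both sides).

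I expect the main obstacle to be bookkeeping: making sure that in the twin case the element $v\in \aker$ really does have $\pi(v)$ equal to $\pi$ of some $\xn k$ with $k\in K$ and that this $k$ can be used in clause (C)(h) without a mismatch of indices, and handling cleanly the sub-case where $g(s)$ itself sits in $B_S$ versus not (which affects whether $\orb{g(s)}$ is $\oorb{}$ or $\eorb{}$). Since $\pi(v)\in Z_0\subseteq Z$ and $\pi(v)={\delta}_k$ with $k\in K$, while the indices in $D\cup M$ have $\pi(\xn i)\notin Z$ by (H)\ (condition (\ref{Claim-2.6})), there is no interference from the ``moving'' coordinates, and the argument closes. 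Throughout, the only external facts used are the $h_{{\nu},{\mu}}$-invariance properties (C)(a)--(i), the monotonicity $\alpha\in o(\beta)\Rightarrow o(\alpha)\subset o(\beta)$, and the placement ${\beta}_x\in \eorb{{\climit}_x}$; no new computation about the tree $\mathbb I$ is needed.
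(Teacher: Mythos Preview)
Your treatment of the cases $s\in \ap$ and $s\in \aq\setminus\ap$ is fine and matches the paper's (the paper also dispatches these via (C)(b) and (C)(h)). The gap is in the case $s=y_x\in Y$.

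First, two incorrect auxiliary claims: you write $\pi(x)=\climit_x$, but this holds only for $x\in F$; for $x\in D\cup M$ one has $\pi(x)<\climit_x$ by Claim \ref{Claim-2.5}(a). You also assert $\orb x\subseteq \eorb{\climit_x}$, which is false in general: for $x\in D$ we have $\orb x=\oorb{\pi(x)}$, and since $\gamma(\climit_x)<\pi(x)<\climit_x$, the basic orbit $\oorb{\pi(x)}$ picks up elements of $E(I)$ for intervals $I\subsetneq J(\climit_x)$ that do not lie in $\eorb{\climit_x}$. So the chain of inclusions you intend to use does not exist.

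More seriously, monotonicity of basic orbits can at best give one direction (and even there it does not literally apply when $x\in F$, $\cf(\climit_x)=\oot$, because then $\beta_x\in\eorb{\climit_x}\setminus\oorb{\climit_x}$). The reverse implication, $\pi(v)\in\orb x\Rightarrow\pi(v)\in\orb s$, is \emph{not} a containment $\orb x\subseteq\orb s$ --- that containment is simply false, since $\orb x$ meets $[\beta_x,\pi(x))$ while $\orb s\subseteq\beta_x$. What is actually true is $\orb s=\orb x\cap\pi(s)$, and this equality is established in the paper by explicit computation in the interval tree $\mathbb I$ (separately for $x\in D$, $x\in M$, $x\in F$). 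Granting that equality, one still needs $\pi(v)<\pi(s)=\beta_x$, and this is precisely where the hypothesis $v\in\aker$ enters: $\pi(v)\in\pi''\aker\subseteq Z$, while by the choice of $\xi_i$ one has $Z\cap[\gd(\climit_x),\climit_x)=\emptyset$, forcing $\pi(v)<\gd(\climit_x)\le\beta_x$. Your sketch mentions $Z$ only in connection with the twin case and explicitly disclaims any ``new computation about the tree $\mathbb I$''; both are needed here, and without them the $Y$ case does not close.
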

\begin{proof}
If $s\in \ap\cup \aq$ then  $g(s)=s$ or $g(s)=s'$, and so  
$\pi(v)\in \orb{g(s)}$ iff  $\pi(v)\in \orb s$ by  (C)(b) and  (C)(h).

Consider now the case $s=y_x\in Y$. 
Then 
$\pi(s)\in E(J(\delta_{x}))\cap 
[\underline\gamma({\delta}_x),\gamma({\delta}_x))$, and so 
\begin{multline*}
 \orb{s}=\oorb{{\pi(s)}}=
\bigcup\{E(I): I\in \mbb I,   I^-<{\pi(s)}<I^+\}\cap {\pi(s)}=\\
\bigcup\{E(I):I\in \mbb I,  J(\delta_x)\subs I\}\cap {\pi(s)}.
\end{multline*}

We distinguish the following two cases.
\newcases
\begin{case}
$\pi(x)<\delta_x$. 
\end{case}

If $x\in \block S$ then   $\gamma(\delta_x)<\pi(x)<\delta_x$
by (\ref{good1}), 
and so 
\begin{equation*}
\orb{x}\cap \pi(s)=\oorb{\pi(x)}\cap \pi(s) = \bigcup \{E(I) : J(\delta_x)\subset I \} \cap 
\pi(s).
\end{equation*}

If  $x\notin \block S$  then   $x\in M$ and  
$\gamma(\delta_x)<\pi(x)<\delta_x$
by (\ref{good1}), and so 
\begin{multline*}
\orb{x}\cap \pi(s)=\eorb{\pi(x)}\cap\pi(s)=\\  (\bigcup\{E(I) : J(\delta_x)\subset I\} \cup 
E(J(\pi(x))))\cap  \pi(s) =\\\bigcup\{E(I) : J(\delta_x)\subset 
I\}\cap  \pi(s) = \orb{s}\cap \pi(s).
\end{multline*}

\begin{case}
$\pi(x)=\delta_x$. 
\end{case}
Then $x\in F$ and so
\begin{multline*}
 \orb{x}=\eorb{\pi(x)}=o(x)\cup (E(J({\delta_x}))\cap {\delta}_{x})=\\
\big(\bigcup\{E(I):  I^-<\pi(x)<I^+\}\cup E(J({\delta_x}))\big)\cap \pi(x)=\\
\bigcup\{E(I):  J(\delta_x)\subs I\}\cap \pi(x),
\end{multline*}
so $\orb{s}\cap \pi(s)=\orb{x}\cap \pi(s)$.

\medskip
So in both cases $\orb{s}\cap \pi(s)=\orb{x}\cap \pi(s)$. Also, note that
as $v\in \aker$, we have that 
$\pi(v)\notin (\underline{{\gamma}}({\delta}_x), {\delta}_x)$,
and hence if $v\in \orb{g(s)}$ then $\pi(v)<\pi(s)$. So,
$\pi(v)\in \orb x =\orb{g(s)}$ iff $\pi(v)\in   \orb s$.
\end{proof}

\begin{claim}\label{cl:vP5}
 If $\{s,t\}\in \br \ar;2;$, $v\in \aker$  and 
$\pi(v)\in \dorb{g(s)}{g(t)}$ then $\pi(v)\in \dorb st$.
\end{claim}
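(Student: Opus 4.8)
The plan is to reduce the claim to Claim~\ref{cl:monsgs} together with the defining formula for $\dorbf$ and the symmetry conventions set up above. Write $x=g(s)$ and $y=g(t)$. The definition of $\dorb{x}{y}$ splits into two cases depending on whether $\pi_B(x)=\pi_B(y)\ne S$ with $\cf(\pi(x))=\kappa^+$, or not. Since $v\in\aker$, we have $v\in B_S$, so $\pi(v)\in\delta\times\{0\}$-fiber plays no role: what matters is the ordinal $\pi(v)$. In the ``otherwise'' case $\dorb{x}{y}=\orb{x}\cap\orb{y}$, and then $\pi(v)\in\orb{x}\cap\orb{y}$; by Claim~\ref{cl:monsgs} applied to $s$ and to $t$ separately, $\pi(v)\in\orb{s}$ and $\pi(v)\in\orb{t}$, hence $\pi(v)\in\orb{s}\cap\orb{t}$. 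It remains to check that $\dorb{s}{t}=\orb{s}\cap\orb{t}$ in this subcase, i.e.\ that we are again in the ``otherwise'' branch for $\{s,t\}$; this follows because $\pi_B(s)=\pi_B(g(s))$ when $s\in\ap\cup\aq$ (the twin map preserves $\pi_B$ by (C)(c)--(d)) and $\pi_B(s)=S$ when $s=y_x\in Y$, so the first branch of the definition of $\dorbf$ is triggered for $\{s,t\}$ exactly when it is triggered for $\{g(s),g(t)\}$ — except possibly when one of $s,t$ lies in $Y$, in which case $\pi_B$ of that element is $S$ and we are automatically in the ``otherwise'' branch for both pairs.

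Next I would treat the first branch: $\pi_B(g(s))=\pi_B(g(t))\ne S$ and $\cf(\pi(g(s)))=\kappa^+$. By the remark just made, this forces $s,t\in\ap\cup\aq$ (neither can be in $Y$), and moreover $\pi_B(s)=\pi_B(t)\ne S$ and $\cf(\pi(s))=\kappa^+$ as well, so the first branch also defines $\dorb{s}{t}$, with the analogous formula involving $\mc F\{\rho(s),\rho(t)\}$. Here the key point is that $g$ restricted to $\ap\cup\aq$ is either the identity or the twin map $h_{\nu,\mu}^{\pm1}$, which preserves $\pi$, preserves $\rho$ (since twins differ only in which copy of $r_\nu$, $r_\mu$ they sit in — more precisely $h_{\nu,\mu}(\xn i)=\xm i$, and these lie in the same block $\block{\pi_B}$, and the combinatorial thinning in (C) together with the definition of twins ensures $\rho$ is matched), and in particular $\mc F\{\rho(s),\rho(t)\}=\mc F\{\rho(g(s)),\rho(g(t))\}$ whenever $\{s,t\}$ and $\{g(s),g(t)\}$ land in the same fiber. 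Then $\dorb{s}{t}$ and $\dorb{g(s)}{g(t)}$ differ only through the $\orb{\cdot}$ summand, and Claim~\ref{cl:monsgs} again transfers membership of $\pi(v)$.

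The genuinely delicate point — and the one I expect to be the main obstacle — is handling the mixed situations where exactly one of $s,t$ lies in $Y$, or where $g(s)=g(t)$ even though $s\ne t$ (which happens precisely when $\{s,t\}=\{x,x'\}$ is a twin pair, but then $\ir\{s,t\}=y_s$ is dealt with separately and the hypothesis $v\in\aker$ combined with $\pi(v)\in\dorb{g(s)}{g(t)}=\orb{x}$ still needs $\pi(v)\in\dorb{s}{t}$; here $\dorb{s}{t}=\orb{s}\cap\orb{t}$ and one uses that $s,s'$ have the same $\pi$, hence the same orbit data, so $\orb{s}\cap\orb{t}\supseteq\orb{x}\cap\{\pi(v)\}$ by Claim~\ref{cl:monsgs}). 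One must also be careful that when $s=y_x\in Y$, the value $\pi(v)$ lies below $\gd(\delta_x)$ — this is exactly the last paragraph of the proof of Claim~\ref{cl:monsgs}, using $v\in\aker\Rightarrow\pi(v)\notin(\gd(\delta_x),\delta_x)$ — so that membership in $\orb{s}$ is governed entirely by the intervals $I\supseteq J(\delta_x)$, which is the content of Claim~\ref{cl:monsgs}. Once these bookkeeping cases are organized, the proof is a short case analysis: in every case the desired conclusion follows by feeding Claim~\ref{cl:monsgs} into whichever of the two branches of the definition of $\dorbf$ applies, noting that the branch is the same for $\{s,t\}$ and for $\{g(s),g(t)\}$.
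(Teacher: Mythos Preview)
Your handling of the ``otherwise'' branch via Claim~\ref{cl:monsgs} is essentially the paper's argument and is fine. The genuine gap is in the first branch.

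You assert that the twin map preserves $\rho$, and hence that $\mc F\{\rho(s),\rho(t)\}=\mc F\{\rho(g(s)),\rho(g(t))\}$. This is false. For $i\in F$, condition (D)(b) says precisely that $\xn i\ne\xm i$ while $\pib(\xn i)=\pib(\xm i)\ne S$; since both lie in the same block $B_{\delta_i}=\{\delta_i\}\times[\kappa,\lambda)$, they share the first coordinate $\delta_i=\pi(\xn i)=\pi(\xm i)$ and therefore must differ in the second, i.e.\ $\rho(\xn i)\ne\rho(\xm i)$. In the relevant mixed situation $s\in\ap\setm\aq$, $t\in\aq\setm\ap$ we have $g(s)=s$ but $g(t)=t'$, so $\mc F\{\rho(g(s)),\rho(g(t))\}=\mc F\{\rho(s),\rho(t')\}$, which bears no a priori relation to $\mc F\{\rho(s),\rho(t)\}$. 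Your proposed reduction therefore breaks down exactly in the case the claim is designed to handle.

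The paper's argument does not try to match the two values of $\mc F$; instead it invokes the good-pair condition \eqref{eq:good}. Writing $\delta'=\delta_s=\delta_t$, one has $\dorb{g(s)}{g(t)}\subseteq\eorb{\delta'}$, and since $v\in\aker$ forces $\pi(v)\notin(\underline\gamma(\delta'),\delta')$ by the choice of $\underline\gamma$ and $\gamma$, the hypothesis yields $\pi(v)\in\eorb{\delta'}\cap\gamma(\delta')$. The good-pair condition then gives $\eorb{\delta'}\cap\gamma(\delta')\subseteq\dorb{s}{t}$ directly. This is the entire purpose of introducing good pairs and of the strongly unbounded function $\mc F$: to guarantee a lower bound on $\dorb{s}{t}$ that is independent of the particular $\rho$-values involved. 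Your outline never mentions \eqref{eq:good}, so the first-branch case cannot be closed.
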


\begin{proof}We should distinguish two cases.
\newcases 
\begin{case}
 $\dorb{s}{t}=\orb s\cap \orb{t}$. 
\end{case}

Since
$\dorb{g(s)}{g(t)}= \orb{g(s)}\cap \orb{g(t)}$
we have $\pi(v)\in\orb{g(s)}\cap \orb{g(t)}$. 
Since  $\pi(v)\in \orb{g(s)}$ implies  $\pi(v)\in \orb s$
and $\pi(v)\in \orb{g(t)}$ implies  $\pi(v)\in \orb t$
by Claim \ref{cl:monsgs}, we have $\pi(v)\in \dorb st$.

\begin{case}\label{ff}
$\dorb{s}{t}= o(s)\cup\big\{\eps {\pi(s)}\zeta:\zeta <F\{ \rho(s), \rho(t)\}\big\}$.
\end{case}

We can assume that $s\in \ap\setm \aq$ and $t\in \aq\setm \ap$.
Then  $s,t\in F$, $\delta'=\delta_s=\delta_t$ has cofinality  ${\kappa}^+$ 
and so  $g(s), g(t)\in F$ and $\delta_{g(s)}=\delta_{g(t)}=\delta'$
and so \begin{equation*}\tag{$\vartriangle$}\label{vinorb}
\pi(v)\in \dorb{g(s)}{g(t)}= \oorb{\delta'}\cup\big\{\eps {\delta'}\zeta:
\zeta <\mathcal F\{\rho(g(s)), \rho(g(t))\}\big\}.
\end{equation*}
Since $\pi''\aker\cap (\gamma(\delta'),\delta')=\empt$,
(\ref{vinorb}) implies  $$\pi(v)\in \eorb{\delta'}\cap \gamma(\delta').$$
But, by \eqref{eq:good} 
\begin{equation*}
\eorb{\delta'}\cap \gamma(\delta')\subs \dorb{s}{t}\cap \gamma(\delta'),  
\end{equation*}
and so $\pi(v)\in \dorb{s }{t}$.
\end{proof}

\begin{sublemma}
\label{Lemma 2.21}  
 $\<\ar,\prer,\ir\>$ satisfies
(P4) and (P5).
\end{sublemma}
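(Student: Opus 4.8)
The plan is to verify (P4) and (P5) for $\<\ar,\prer,\ir\>$ by a case analysis organised around which clause of the definition of $\ir$ applies, drawing throughout on: the order-preservation of $g\colon\ar\to\ap$ (Claim~\ref{pr:xlez-gxegz}); the description of $\prer\restriction(\ap\cup\aq)$ as the order generated by $\prep\cup\preq$ (Claim~\ref{Claim-2.18}); the kernel-factorisation facts (Claims~\ref{Claim-2.10}--\ref{Claim-2.13}); the two transfer claims \ref{cl:monsgs} and \ref{cl:vP5}; the placement $\pi(y_x)=\beta_x\in\eorb{\climit_x}\cap[\gd(\climit_x),\gu(\climit_x))\subs E(J(\climit_x))$; and the conditions (P2), (P3), (P6) for $r_{\nu}$ and $r_{\mu}$.

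For (P4) I would argue in two steps. First, $\ir\{s,t\}$ is always a common lower bound of $s$ and $t$: in the twin clause ($s\in A'$, $t=s'$) we have $g(y_s)=s=g(s)=g(t)$, so $y_s\pre1 s$ and $y_s\pre1 t$; otherwise set $v=\ip\{g(s),g(t)\}$, so $v\prep g(s)$ and $v\prep g(t)$, and if $v\in\aker$ then $\ir\{s,t\}=v$ and $v\prer s$ is read off $v\prep g(s)$ --- directly when $s\in\ap$, through the isomorphism $h_{\nu,\mu}$ (which fixes $v\in\aker$) when $s\in\aq\setm\ap$, and via $\pre2$ with bridge $v$ when $s=y_x\in Y$ --- while if $v\notin\aker$ then $\ir\{s,t\}=y_v$ and $g(y_v)=v\prep g(s)$ gives $y_v\pre1 s$; symmetric reasoning gives $\ir\{s,t\}\prer t$. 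With transitivity of $\prer$ (Sublemma~\ref{cl:prerprec}), this yields that $a\prer\ir\{s,t\}$ forces $a\prer s$ and $a\prer t$. For the converse, let $a\prer s$ and $a\prer t$; Claim~\ref{pr:xlez-gxegz} gives $g(a)\prep g(s)$, $g(a)\prep g(t)$, hence $g(a)\prep v$ (or $g(a)\prep g(s)=g(t)$ in the twin clause). If $a\in Y$ then $g(a)\prep g(\ir\{s,t\})$ in every clause, so $a\pre1\ir\{s,t\}$; if $a\in\aker$ then $g(a)=a$ and $a\pre2\ir\{s,t\}$ with $a$ as its own bridge. The remaining possibility $a\in(\ap\cup\aq)\setm\aker$ is the hard one: by (P3), $\pib(a)\ne S$ would force $a=s=t$, which is impossible, so $\pib(a)=S$ and therefore $a=\xp k$ or $a=\xq k$ with $k\in D$; here I would trace the $\prepq$-path from $a$ to $t$ (which must leave $\ap$, or $\aq$, or enter $Y$), locate a bridging kernel point via Claim~\ref{Claim-2.18}, push it into $B_S$ by Claim~\ref{Claim-2.11}, and so obtain $b\in\aker$ with $g(a)\prep b\prep g(\ir\{s,t\})$, i.e.\ $a\pre2\ir\{s,t\}$. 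The degenerate pairs with $\{s,t\}\subs\ap$ or $\subs\aq$ are covered by $\ir\supset\ip\cup\iq$ together with the same $g$-computation.

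For (P5) assume $s$ and $t$ are $\prer$-incomparable but compatible. If $\ir\{s,t\}=v\in\aker$ I would first check that $g(s)$ and $g(t)$ are incomparable and compatible in $r_{\nu}$: compatibility is immediate from compatibility of $s,t$, and $v\in\aker$ together with $g(s)\prep g(t)$ would put one of $s,t$ into $\aker$ and the other into $Y$ and then make them $\pre2$-comparable, contradicting incomparability; so (P5) for $r_{\nu}$ gives $\pi(v)\in\dorb{g(s)}{g(t)}$, and Claim~\ref{cl:vP5} upgrades this to $\pi(v)\in\dorb st$. Otherwise $\ir\{s,t\}=y_w$ with $w=v$ (if $v\notin\aker$) or $w=s$ (twin clause), so $\pi(\ir\{s,t\})=\beta_w\in\eorb{\climit_w}\cap[\gd(\climit_w),\gu(\climit_w))$ and $\climit_w=\climit_{g(s)}=\climit_{g(t)}$ (by Claim~\ref{Claim-2.13}, respectively trivially); call this common value $\climit'$. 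When $\dorb st=\orb s\cap\orb t$, one shows $\beta_w\in\orb s$ and $\beta_w\in\orb t$ by the same interval bookkeeping as in Claim~\ref{cl:monsgs}, using (\ref{good1}) to place $\pi(s),\pi(t)$ inside $J(\climit')$ at or above $\gu(\climit')$ while $\beta_w\in E(J(\climit'))$ lies below $\gu(\climit')$; when instead the $\mc F$-clause defines $\dorb st$ --- which forces $s,t\in F$ and $\cf(\climit')=\oot$ --- the good-pair inequality \eqref{eq:good} gives $\eorb{\climit'}\cap\gu(\climit')\subs\dorb st$, so again $\beta_w\in\dorb st$.

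The step I expect to be the main obstacle is the last case of the converse half of (P4) --- equivalently the parallel incomparability-transfer step in (P5) --- namely handling a common lower bound $a\in(\ap\cup\aq)\setm\aker$ with $\pib(a)=S$, i.e.\ $a=\xp k$ (or $\xq k$) with $k\in D$. This needs the placement of the diagonal points $\xp k$ strictly inside $J(\climit_k)$ and far above $Z$ granted by (\ref{good1}), the avoidance condition (\ref{omitostar}), condition (P6), and the kernel-bridging Claims~\ref{Claim-2.11} and~\ref{Claim-2.12}, combined to show that the path from $a$ to $\ir\{s,t\}$ passes through a point of $\aker$. The orbit-membership computations for the $y_w$-valued clauses also require care in the $\cf(\climit')=\oot$ subcases, but there the $\mc F$-clause of $\dorbf$ is exactly what the good-pair inequality \eqref{eq:good} was arranged to absorb, so they go through.
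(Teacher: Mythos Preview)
Your plan is correct and uses the same toolkit as the paper (the $g$-projection, Claims~\ref{Claim-2.11}--\ref{Claim-2.13}, \ref{cl:vP5}, the good-pair condition \eqref{eq:good}), but the (P4) converse is organised differently. The paper splits first on whether $\{s,t\}\subs\ap$ (or $\aq$), then on $v\in\aker$ versus $v\notin\aker$; in the last subcase it further separates the common lower bound $w$ by $\delta_{g(w)}<\delta_v$ versus $\delta_{g(w)}=\delta_v$, and in the equality case argues \emph{impossibility}: using $[\gu(\delta_v),J(\delta_v)^+)\cap Z=\empt$ it forces $s,t$ onto the same side of the $\Delta$-system as $w$, contradicting $\{s,t\}\notin\br{\ap}{2}\cup\br{\aq}{2}$, so $w\in Y$. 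Your organisation by the type of $a$ is more uniform and also works, but the mechanism is slightly different from what you describe: when $a\in\ap\setm\aker$ and (say) $t\notin\ap$, the \emph{only} clause of $\prer$ available is $a\pre2 t$ --- no $\prepq$-path tracing via Claim~\ref{Claim-2.18} is needed --- and this hands you $c\in\aker$ with $a\prep c\prep g(t)$; taking $\ip\{c,g(s)\}$ and invoking Claim~\ref{Claim-2.12} (not \ref{Claim-2.11}) then yields $b\in\aker$ with $a\prep b\prep v=g(\ir\{s,t\})$. The paper's impossibility subcase is absorbed silently into this: if $\delta_a=\delta_v=\delta_{g(t)}$, the bridge $c$ would satisfy $\pi(c)=\delta_c=\delta_v\ge\pi(g(t))>\pi(c)$, so that subcase never arises and one may as well apply Claim~\ref{Claim-2.11} directly to $a\prep v$. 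Your (P5) argument matches the paper's almost exactly.
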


\newcases
\begin{proof}
Let $\{s,t\}\in \br {\ar};2;$ be a pair of $\prer$-incomparable and 
$\prer$-compatible
elements.
\begin{case}
$\{s,t\}\in \br \ap;2;$. (The case $\{s,t\}\in \br \aq;2;$ is similar) 
\end{case}

Since $\prep\subs \prer$, we have $\ip\{s,t\}\prer s,t$, 
so to check (P4) we should show that $x\prer s,t$ implies 
$x\prer \ip\{s,t\}$.
We can assume that $x\notin \ap.$

If $x\in Y$, then $x\pre1 s$ and 
$x\pre1 t$, i.e. $g(x)\prep g(s), g(t)$ and so 
$g(x)\prep \ip\{g(s),g(t)\}=\ip\{s,t\}=g(\ip\{s,t\})$, and so $x\pre1\ip\{s,t\}$.
Thus $x\prer \ip\{s,t\}$.

If $x\in \aq\setm \ap$, then $x\pre{2} s$ and 
$x\pre{2} t$, i.e. $g(x)\prep a\prep  g(s)$ and $g(x)\prep b\prep g(t)$
for some $a,b\in \aker$. Then $c=\ip\{a,b\}\in \aker$, and so 
$g(x) \prep c\prep \ip\{g(s),g(t)\}=\ip\{s,t\}=g(\ip\{s,t\})$, and so $x\pre{2}\ip\{s,t\}$.
Thus $x\prer \ip\{s,t\}$.

Finally 
(P5) holds in Case 1 because $r_{\nu}$ satisfies (P5).

\begin{case}
$\{s,t\}\notin \br \ap;2;\cup \br \aq;2;$. 
\end{case}

To check (P4) we should prove that 
 $\ir\{s,t\}$ is the greatest common lower bound of 
$s$ and $t$ in $\<\ar,\prer\>$.

Assume first that $s$ and $t$ are not twins. Note that by Claim \ref{pr:xlez-gxegz},
$g(s)$ and $g(t)$ are $\prep$-compatible.
Write $v=\ip\{g(s), g(t)\}$.

\begin{scase}
$v\in \aker$, and so $\ir\{s,t\}=v$.  
\end{scase}

Since $v=g(v)\prep g(s)$ and $v\in \aker$, we have $v\pre{2} s$.
Similarly $v\pre{2} t$. Thus $v$ is a common lower bound of $s$ and $t$.

To check that $v$ is
the greatest lower bound of $s,t$ in $\<{\ar},\prer\>$
let $w\in \ar$, $w\prer s,t$. Then 
$g(w)\prep g(s),g(t)$. 
Thus $g(w) \prep \ip\{g(s), g(t)\}=v$.

Since $v\in \aker$, 
$g(w)\prep v$ implies $w\pre{2} v$. Thus $w\prer v$.  Thus (P4) holds.

To check (P5)
observe that $g(s)$ and $g(t)$ are incomparable in $\ap$.
Indeed, $g(s)\prep g(t)$ implies $v=g(s)\in \aker$ and so
$g(s)\prep g(t)$ implies $s\pre{2} t $, which contradicts our assumption
that $s$ and $t$ are $\prer$-incomparable.

Thus,  by applying (P5) in $r_{\nu}$,
\begin{equation*}
{\pi}(v)\in \dorb{g(s)}{g(t)}. 
\end{equation*}

Thus $\pi(v)\in \dorb{s}{t}$ by Claim \ref{cl:vP5}, and so (P5) 
holds.

\begin{scase}
$v\notin \aker$, and so $\ir\{s,t\}=y_v$.  
\end{scase}

If $g(s)$ and $g(t)$ are 
$\prep$-comparable then $\delta_{g(s)} = \delta_{g(t)}$, because otherwise 
we would infer from Claim \ref{Claim-2.11} that 
$s,t$ are $\prer$-comparable, which is 
impossible.

Now assume that 
 $g(s)$ and $g(t)$ are 
$\prep$-incomparable.

If $\delta_v<\delta_{g(s)}$, then there is $a\in \aker\cap B_S$
with $v\prep a\prep g(s)$ by Claim \ref{Claim-2.11}. Thus $v=\ip\{a, g(t)\}$ and so $v\in \aker$
by Claim \ref{Claim-2.12}.
Thus $\delta_v=\delta_{g(s)}$, and similarly $\delta_v=\delta_{g(t)}$.

Hence we have $$\delta_{g(s)}=\delta_{g(t)}=\delta_v$$ in both cases. 
Thus $$\pi(y_v)\in   E(J({\delta_v})) 
\cap [\underline {\gamma}({\delta_v}),{\gamma}({\delta_v})).$$
If $s,t \in F$ and $\cf({\delta}_v)={\kappa}^+$, by condition \eqref{eq:good}, we deduce that 
$E(J({\delta}_v))\cap {\gamma}({\delta}_v)\subs f\{s,t\}$, and so as 
$\pi(y_v)<{\gamma}({\delta}_v)$, we have $\pi(y_v)\in f\{s,t\}$. Otherwise, 
\begin{equation*}
E(J({\delta_v}))\cap \min(\pi(s),\pi(t))\subs \dorb{s}{t}. 
\end{equation*}
Then as $v=\ip\{g(s),g(t)\}$, we have $\pi(v)<\pi(g(s)), \pi(g(t))$, hence
$\pi(y_v)<\pi(s), \pi(t)$ and thus  $\pi(y_v)\in \dorb{s}{t}$.

Thus (P5) holds.

To check (P4)
first we show that $y_v\prer s,t$.  Indeed $g(v)\prep g(s)$ implies
$y_v\pre1 s$. We obtain $y_v\pre1 t$ similarly.

Let $w\prer s,t $.

Assume first that 
${\delta}_{g(w)}<{\delta}_v$.  
Since $w\prer s,t$ we have $g(w)\prep g(s), g(t)$  by Claim \ref{pr:xlez-gxegz}
and hence $g(w)\prep \ip\{g(s),g(t)\}=v.$   By Claim \ref{Claim-2.11} there is $a\in \aker$
such that $g(w)\prep a \prep v$. 
 Thus $w\pre{2} y_v$.

Assume now that 
${\delta}_{g(w)}={\delta}_v$. 

Then, we have that $w\in Y$. To check this fact, 
assume on the contrary that $w\in {\ap} \cup {\aq}$. So, we have $\delta_w = 
\delta_{g(w)} = \delta_v = \delta_{g(s)} = \delta_{g(t)}$.  
Note that if $s\in Y$, then 
$\pi(s)\in [\underline{{\gamma}}({\delta}_w),{\gamma}({\delta}_w))$, which contradicts the 
assumption that  $w\prer s$. So $s\notin Y$, and analogously $t\notin Y$. 
Assume that $w\in {\ap}$. As $w\prer s,t$ and 
$[\gamma(\delta_v),J(\delta_v)^+)\cap Z = \empt$, it follows that $s,t \in {\ap}$, 
which was excluded. Analogously, $w\in {\aq}$ implies $s,t \in {\aq}$. 

Therefore, $w 
= y_z$ for some $z\in A'$.
Then $z\prep g(s)$ and $z\prep g(t)$,
and so $z\prep \ip\{g(s), g(t)\}=v$.
Thus $y_z\pre1 y_v$.

\medskip
If $s$ and $t$ are twins, then $s\in A'$ implies that $\ir\{s,t\}=y_s$ and we can proceed as above in Case 2.2.

So we proved Sublemma \ref{Lemma 2.21}.
\end{proof}

\begin{sublemma}
\label{Lemma 2.22}  $\<{\ar},\prer,\ir\>$
satisfies $(P6)$.
\end{sublemma}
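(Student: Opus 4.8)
The plan is to verify condition (P6) for the amalgamated structure $\langle\ar,\prer,\ir\rangle$ directly from the definition of $\prer$ given in \eqref{preceq}. So suppose $\{s,t\}\in\br\ar;2;$ with $s\prer t$, and suppose $\Lambda\in\mathbb I$ separates $s$ from $t$, i.e.\ $\Lambda^-<\pi(s)<\Lambda^+<\pi(t)$. We must produce $z\in\ar$ with $s\prer z\prer t$ and $\pi(z)=\Lambda^+$. The first reduction is to pass to $g(s)$ and $g(t)$: since $s\prer t$ gives $g(s)\prep g(t)$ by Claim \ref{pr:xlez-gxegz}, and since $\pi(g(s))\le\pi(s)$ only when $s\in Y$ (while $\pi(g(s))=\pi(s)$ when $s\in\ap\cup\aq$ up to twinning, where $\pi(x')=\pi(x)$ by (C)(b)), I would first check that $\Lambda$ still ``almost'' separates $g(s)$ from $g(t)$, or more precisely that the $\Lambda^+$ we need can be located inside $\ap$ (or $\aq$) via (P6) for $r_\nu$.

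The key steps, in order: (1) Reduce to the case $s,t\in\ap\cup\aq$ by handling $s\in Y$ and $t\in Y$ separately — if $s=y_x\in Y$ then $\pi(s)\in[\underline\gamma(\delta_x),\gamma(\delta_x))\subseteq J(\delta_x)$, and since $J(\delta_x)\in\mathbb I$ with $\gamma(\delta_x)<\pi(x)$, any $\Lambda$ separating $s$ from $t$ with $\Lambda^+\le\pi(x)$ satisfies $\Lambda\subseteq J(\delta_x)$ or $J(\delta_x)\subseteq\Lambda$; one then transfers to $g(s)=x$. Similarly analyze $t=y_x$. (2) In the reduced case, if $s,t\in\ap$ (resp.\ both in $\aq$), apply (P6) for $r_\nu$ (resp.\ $r_\mu$) to get $z\in\ap$ with $\pi(z)=\Lambda^+$ and $s\prep z\prep t$; then $z\in\ar$ and $\prep\subseteq\prer$ finishes it. (3) The genuinely new case is $s\in\ap\setm\aq$, $t\in\aq\setm\ap$ (or vice versa). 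Here $s\prer t$ must have come through $\pre1$ or $\pre2$; since $s\notin Y$ it came through $\pre2$, so there is $a\in\aker$ with $g(s)=s\prep a\prep g(t)=t'$ and $\pi(s)<\pi(a)<\pi(t)$. Now compare $\Lambda^+$ with $\pi(a)$: if $\Lambda^+\le\pi(a)$, apply (P6) in $r_\nu$ to the pair $s\prep a$ (with $\Lambda$ separating them, possibly after checking $\Lambda$ does separate $s$ from $a$) to find $z\in\ap\subseteq\ar$ with $\pi(z)=\Lambda^+$ and $s\prep z\prep a$, hence $s\prer z$; and $z\prep a\prep t'$, i.e.\ $z\pre2 t$, gives $z\prer t$. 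If $\Lambda^+>\pi(a)$, instead work in $r_\mu$ with the pair $a\prep t'$ (here $\aker$-membership of $a$ means $a\preq t'$ too), get $z\in\aq$ with $\pi(z)=\Lambda^+$ and $a\preq z\preq t'$; then $z\preq t$ hence $z\prer t$, and $s\prep a$ combined with $a\preq z$ gives... this needs care: one needs $s\prer z$, which follows if $a\prep\tilde a\prep g(z)$ for some $\tilde a\in\aker$ — but $a$ itself is in $\aker$ and $a\preq z$ means $g(a)\preq g(z)$ through $\aq$, so $s\pre2 z$ directly since $g(s)=s\prep a\prep a\prep g(z)$ via $a\in\aker$. (4) Also handle $s,t$ twins, where $\ir\{s,t\}=y_s$ and $\pi(y_s)<\pi(s)=\pi(t)$, so $\Lambda$ separating the twins is impossible or reduces to an earlier case.

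The main obstacle I expect is step (3): checking that in the mixed case the witness $z$ produced by (P6) in one of $r_\nu,r_\mu$ actually lies $\prer$-between $s$ and $t$ — this requires chasing through which of $\pre1,\pre2$ the new comparabilities fall into, and carefully using that the intermediate $\aker$-element $a$ exists (from $\pre2$) precisely so that $z$ can be glued on the correct side. A secondary subtlety is making sure $\Lambda$ genuinely separates the shorter pair (e.g.\ $s$ from $a$, or $a$ from $t'$) rather than just $s$ from $t$; this is where one uses that $\pi(a)$ is an ordinal strictly between $\pi(s)$ and $\pi(t)$ together with the tree structure of $\mathbb I$ (property (i): any two intervals are nested or disjoint), so $\Lambda^+$ either sits below $\pi(a)$ or $\pi(a)\le\Lambda^-$, and in the latter subcase $\Lambda$ separates $a$ from $t$. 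Once these interval comparisons are organized, each subcase is a one-line application of (P6) for $r_\nu$ or $r_\mu$ plus the definition \eqref{preceq} of $\prer$.
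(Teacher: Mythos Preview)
Your plan is broadly workable and step (3) is essentially correct, but the organization differs from the paper's and step (1) contains a genuine gap.

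\textbf{Comparison.} The paper's primary case split is not on membership in $\ap,\aq,Y$, but on whether $\delta_{g(s)}<\delta_{g(t)}$ or $\delta_{g(s)}=\delta_{g(t)}$. In the strict case, Claim \ref{Claim-2.11} immediately supplies $a\in\aker\cap B_S$ with $g(s)\prep a\prep g(t)$, and one then compares $\pi(a)$ with $\Lambda$ exactly as you do in step (3). In the equal case, the paper shows directly that no separating $\Lambda$ can exist. Your decomposition by $\ap/\aq/Y$-membership recovers the $\aker$-element in step (3) from the very definition of $\pre{2}$, which is neat; but you then have to re-derive the ``impossible'' cases inside step (1), and this is where your sketch is incomplete.

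\textbf{The gap in step (1).} Saying ``transfer to $g(s)=x$'' is not a reduction: $\pi(s)<\pi(g(s))$, so $\Lambda$ need not separate $g(s)$ from $t$. What actually makes the $Y$-case work is the observation that $\pi(s)\in E(J(\delta_{g(s)}))$ together with $\Lambda^-<\pi(s)<\Lambda^+$ forces $J(\delta_{g(s)})\subseteq\Lambda$ (not merely ``$\subseteq$ or $\supseteq$''; the reverse inclusion is impossible because $\pi(s)$ is the left endpoint of an interval in $\mathcal E(J)$). From $J(\delta_{g(s)})\subseteq\Lambda$ one gets two things: first, if $\delta_{g(s)}=\delta_{g(t)}$ then $\pi(t)\le\Lambda^+$ and the case is vacuous; second, if $\delta_{g(s)}<\delta_{g(t)}$ then $\pi(g(s))\in\Lambda$, so $\Lambda$ genuinely separates $g(s)$ from the kernel element $a$, and (P6) in $r_\nu$ applies. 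You allude to the nesting but do not resolve it or draw these consequences, and without them the ``transfer'' has no content.

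A secondary point: in the paper's treatment the witness $b$ obtained from (P6) in $r_\nu$ is shown to lie in $\aker$ via $\pi(b)\in Z$ and assumption (H); this is what lets $b$ serve as a $\pre{2}$-pivot for arbitrary $s$. In your step (3) you cleverly avoid this by pivoting through the pre-existing $a\in\aker$ instead, which is fine there; but in the $Y$-cases of step (1) you will need either the $\aker$-membership argument or an analogous pivot, and your sketch does not indicate which.
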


\begin{proof}
Assume that $\{s,t\}\in \br \ar;2;$, $s\prer t$ and $\Lambda$
separates ${s}$ from ${t}$, i.e,
\begin{equation*}
 \Lambda^-<\pi(s)<\Lambda^+<\pi(t).
\end{equation*}

We should find $v\in {\ar}$
such that $s\;\prer\; v\;\prer\; t$ and ${\pi}(v)=\Lambda^+$. 

Note that since $s\prer t$, we have $\delta_{g(s)}\leq
\delta_{g(t)}$ by Claim \ref{Claim-2.10}.

We can assume that 
  $\{s,t\}\notin \br \ap;2;\cup \br \aq;2;$
because $r_{\nu}$ and $r_{\mu}$ satisfy (P6).

\newcases
\begin{case}
${\delta}_{g(s)}<{\delta}_{g(t)}$.  
\end{case}

As $g(s)\prep g(t)$,  there is 
 $a\in \aker\cap B_S$ with $g(s)\prep a \prep g(t)$
 by Claim \ref{Claim-2.11}. 

\begin{scase}
${\pi}(a)\in \Lambda$.  
\end{scase}

Thus $\Lambda$ separates $a$ from  $g(t)$.

Applying (P6)  in $r_{\nu}$  for $a$ and  $g(t)$ and $\Lambda$ we obtain $b\in \ap$  
 such that $a\prep b\prep g(t)$   and $
{\pi}(b)={\Lambda}^+$. 

       Note that as ${\pi}(a)\in \Lambda, a\in \aker$ and ${\pi}(b)
       =\Lambda^+$, we have that ${\pi}(b)\in Z$.
Thus $b\in \aker$ by (\ref{good1}).

Thus $g(s)\prep b\prep g(t)$ implies $s\pre2b\pre{2} t$, and so $s\preceq b \preceq t$.

\begin{scase}
${\pi}(a)\notin \Lambda$.  
\end{scase}

If $\Lambda^+=\pi(a)$, then we are done because
$g(s)\prep a \prep g(t)$ implies $s\prer a \prer t$.

So we can assume that $\Lambda^+<\pi(a)$. 

Since $r_{\nu}$ and $r_{\mu}$ satisfy $(P6)$ and $\Lambda$ separates $s$ from $a$, 
we can assume that $s\notin \ap\cup \aq$.

Hence $s=y_{g(s)}$ and 
$\Lambda$ separates $g(s)$ from $a$ because 
$\pi(s)\in J(\delta_{g(s)})\subs \Lambda$. 
(If $\Lambda\subsetneq J(\delta_{g(s)})$, then $\Lambda^-<\pi(s)<\Lambda^+$
is not possible.)

Thus there is $b\in \ap$ such that $g(s)\prep b\prep a$ and $\pi(b)=\Lambda^+$.

Since $\delta_{g(s)}\in Z_0$, we have $\pi(b)\in Z$, and so $b\in \aker$
by (\ref{good1}).

Thus $s=y_{g(s)}\pre1 b \pre{2} t$, and so $s\prer b\prer t$.

\begin{case}
${\delta}_{g(s)}={\delta}_{g(t)}$.  
\end{case}
 
We will see that this case is not possible. 
 
\begin{scase}
$s\in \ap$.  
\end{scase}

As $s\prer t$  and $[\gamma(\delta_s),J(\delta_s)^+)\cap Z=\empt$
we have that $t\notin {\aq}$.

Since $s\in {\ap}$, $s\prer t$ and $\delta_s = \delta_{g(t)}$
we have $t\notin Y$, and so $t\in \ap$, 
 which was excluded.

     By means of a similar argument, we can show that $s\in {\aq}$
     is also impossible.

\begin{scase}
$s=y_{g(s)}$.  
\end{scase}

Then $\pi(s)\in E(J(\delta_{g(s)}))$ and so 
$\Lambda^-<\pi(s)<\Lambda^+$ implies 
$J(\delta_{g(s)})\subs \Lambda$.
But then  $\pi(t)\le \Lambda^+$, so     
$\Lambda$ can not separate $s$ from $t$.

Thus (P6) holds.

Thus we proved Sublemma \ref{Lemma 2.22}.
\end{proof}

Thus we proved that $r$ is a common extension of $r_{\nu}$ and $r_{\mu}$.

This completes the proof of Lemma \ref{Lemma-2.3}, i.e. $\pcal$ satisfies
${\kappa}^+$-c.c.
\end{proof}

\end{document}